\newcommand{\optionaldesc}[2]{%
  \phantomsection
  #1\protected@edef\@currentlabel{#1}\label{#2}%
}
\newlist{compitem}{itemize}{4}
\setlist[compitem,1]{nolistsep,label=$\bullet$}
\colorlet{link}{red!60!black}
\theoremstyle{plain}
\newtheorem{theorem}{Theorem}[section]
\newtheorem{lemma}[theorem]{Lemma}
\newtheorem{proposition}[theorem]{Proposition}
\newtheorem{corollary}[theorem]{Corollary}
\theoremstyle{definition}
\newtheorem{definition}[theorem]{Definition}
\theoremstyle{remark}
\newtheorem{remark}[theorem]{Remark}
\newcommand{\labeltext}[3][]{%
    \@bsphack%
    \csname phantomsection\endcsname
    \def\tst{#1}%
    \def\labelmarkup{\textcolor{link}}
    \def\refmarkup{}%
    \ifx\tst\empty\def\@currentlabel{\refmarkup{#2}}{\label{#3}}%
    \else\def\@currentlabel{\refmarkup{#1}}{\label{#3}}\fi%
    \@esphack%
    \labelmarkup{#2}
}
\newcommand{\N}{\mathbb{N}} 
\newcommand{\R}{\mathbb{R}} 
\renewcommand{\P}{{\mathbb{P}}} 
\newcommand{\E}{\mathbb{E}} 
\newcommand{\Lp}{\mathbb{L}} 
\newcommand{\1}{\mathbbm{1}} 
\newcommand{\esp}[1]{\mathbb{E}\left[#1\right]} 
\newcommand{\proba}[1]{\mathbb{P}\left(#1\right)} 
\newcommand{\e}{\varepsilon} 
\newcommand{\T}{\mathcal{T}} 
\newcommand{\F}{\mathcal{F}} 
\newcommand{\birth}{\mathsf{b}} 
\newcommand{\coal}{\mathsf{c}} 
\renewcommand{\H}{\mathsf{H}} 
\newcommand{\Height}{\mathsf{Height}} 
\newcommand{\h}{\mathsf{h}} 
\newcommand{\Xb}{\mathrm{\mathbf{x}}} 
\newcommand{\X}{\mathrm x} 
\newcommand{\XXb}{\mathrm{\mathbf{X}}} 
\newcommand{\XX}{\mathrm X} 
\newcommand{\V}{\mathbb{V}} 
\newcommand{\Fbb}{\mathbb{F}} 
\newcommand{\A}{\mathbb{A}} 
\newcommand{\dc}{D} 
\newcommand{\cv}[1][n]{\enskip\mathop{\longrightarrow}^{}_{#1 \to \infty}\enskip}
\newcommand{\cvloi}[1][n]{\enskip\mathop{\longrightarrow}^{(d)}_{#1 \to \infty}\enskip}
\newcommand{\cvps}[1][n]{\enskip\mathop{\longrightarrow}^{a.s.}_{#1 \to \infty}\enskip}
\newcommand{\cvproba}[1][n]{\enskip\mathop{\longrightarrow}^{\P}_{#1 \to \infty}\enskip}
\newcommand{\cvLp}[1][p]{\enskip\mathop{\longrightarrow}^{\Lp^{#1}}_{n \to \infty}\enskip}
\DeclareMathOperator*{\limit}{\longrightarrow}
\newcommand{\K}{\mathbb{K}}
\newcommand{\GH}{\text{GH}}
\DeclareMathOperator*{\supp}{supp}
\DeclareMathOperator*{\diam}{Diam}
\newcommand{\GP}{\text{GP}}
\newcommand{\GHP}{\text{GHP}}
\let\originalleft\left
\let\originalright\right
\renewcommand{\left}{\mathopen{}\mathclose\bgroup\originalleft}
\renewcommand{\right}{\aftergroup\egroup\originalright}
\DeclareSymbolFont{extraup}{U}{zavm}{m}{n}
\DeclareMathSymbol{\vardspade}{\mathalpha}{extraup}{81}
\DeclareMathSymbol{\varheart}{\mathalpha}{extraup}{86}
\DeclareMathSymbol{\vardiamond}{\mathalpha}{extraup}{87}
\DeclareMathSymbol{\varclub}{\mathalpha}{extraup}{84}
\renewcommand*{\@fnsymbol}[1]{\ensuremath{\ifcase#1\or  \vardspade \or \varheart \or \vardiamond\or \varclub \or \bigstar \or
   \mathsection\or \mathparagraph\or \|\or **\or \dagger\dagger   \or \ddagger\ddagger \else\@ctrerr\fi}}
\author{
\'Etienne Bellin 
\thanks{CMAP, \'Ecole polytechnique, Institut Polytechnique de Paris, 91120 Palaiseau, France, \textsf{etienne.bellin@polytechnique.edu}
} \qquad  
Arthur Blanc-Renaudie 
\thanks{Tel Aviv University, Israel, \textsf{ablancrenaudiepro@gmail.com}. Supported by ERC consolidator grant 101001124 (UniversalMap). 
} 
\qquad  
Emmanuel Kammerer 
\thanks{CMAP, \'Ecole polytechnique, Institut Polytechnique de Paris, 91120 Palaiseau, France, \textsf{emmanuel.kammerer@polytechnique.edu}
}
 \qquad  
Igor Kortchemski 
\thanks{CNRS \& CMAP, \'Ecole polytechnique, Institut Polytechnique de Paris, 91120 Palaiseau, France, \textsf{igor.kortchemski@math.cnrs.fr}
} 
}
\title{Uniform attachment with freezing: Scaling limits}
\begin{document}
\date{}
\vspace{-2cm}
\maketitle 
\begin{abstract}
We investigate scaling limits of trees built by uniform attachment with freezing, which is a variant of  the classical model of random recursive trees introduced in a companion paper.  Here vertices are allowed to freeze, and arriving vertices cannot be attached to already frozen ones. We identify a phase transition when  
the number of non-frozen vertices roughly evolves  as the total number of vertices to a given power. In particular, we observe a critical regime where the scaling limit is a random compact real tree,  closely related to  a time non-homogeneous Kingman coalescent process identified by Aldous.  Interestingly, in this critical regime, a condensation phenomenon can occur.
\end{abstract}

\begin{figure}[!ht]
\label{fig:ssimus_intro}
\centering
\includegraphics[width=0.45\linewidth]{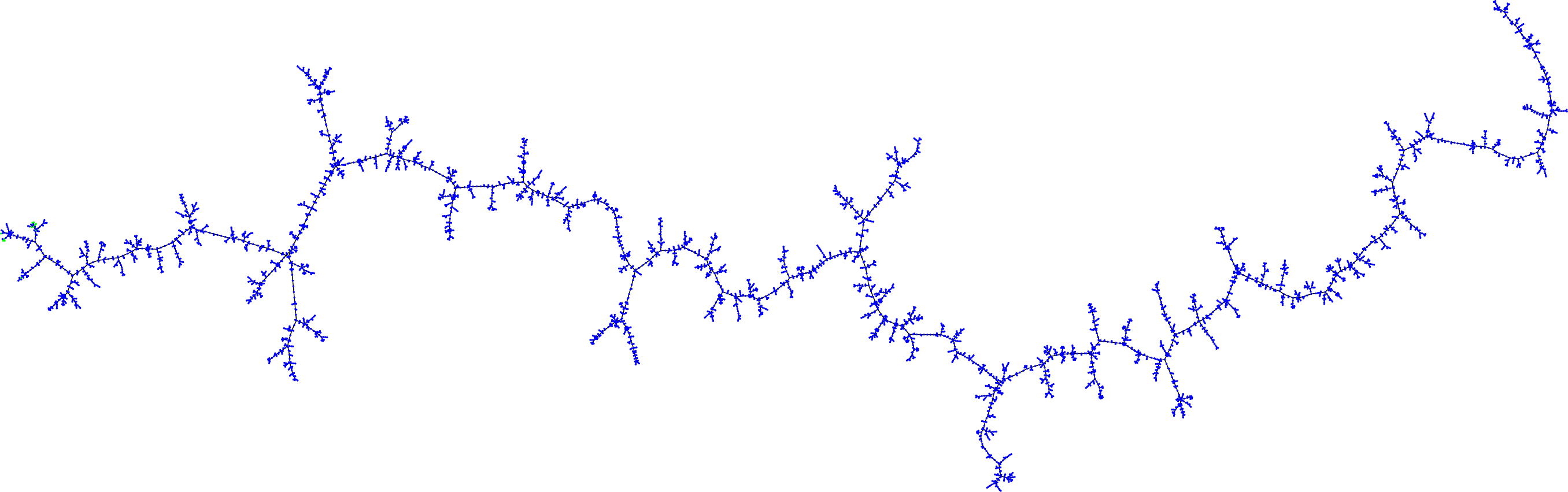}%
\qquad  \includegraphics[width=0.45\linewidth]{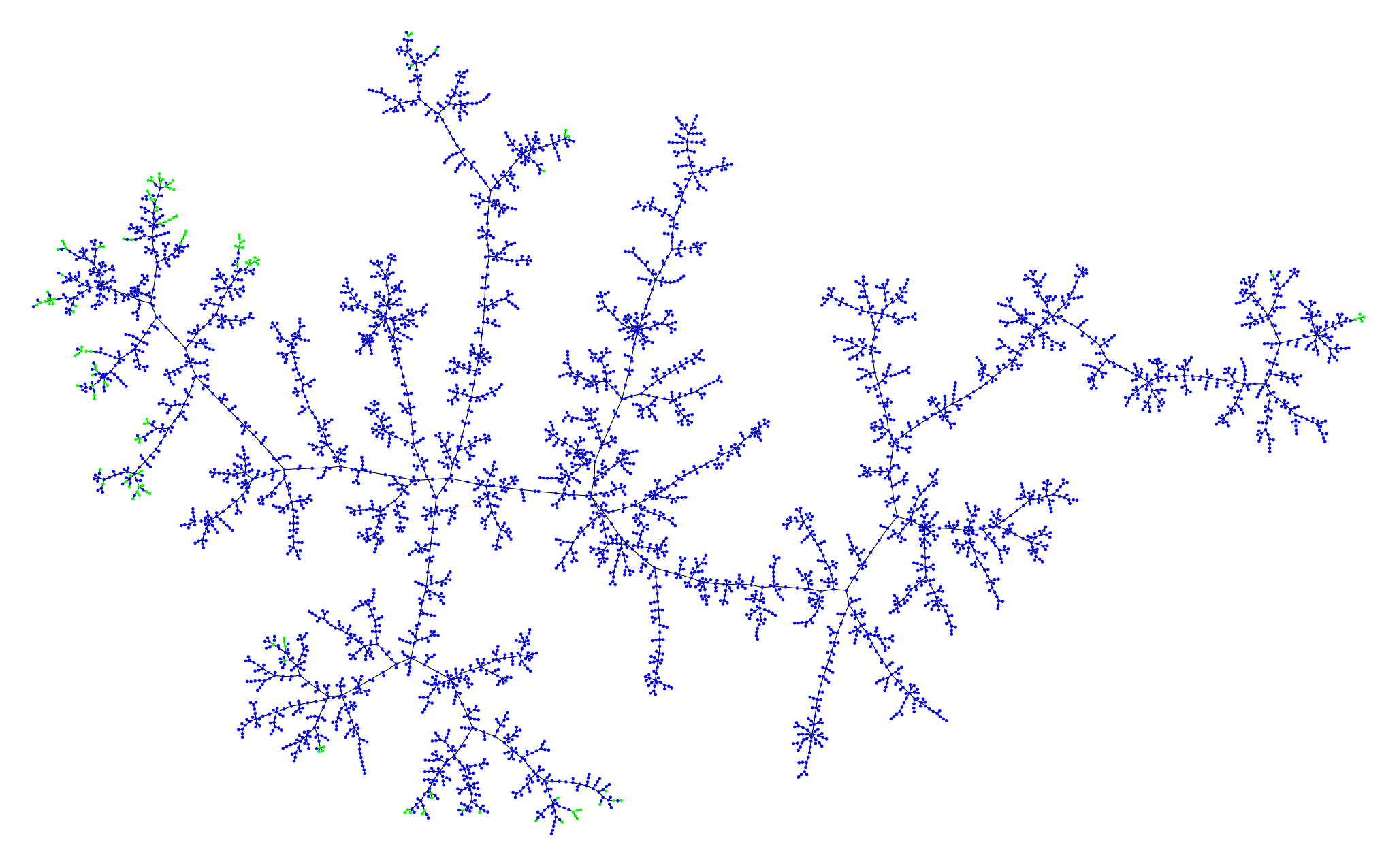} \\
\includegraphics[width=0.4\linewidth]{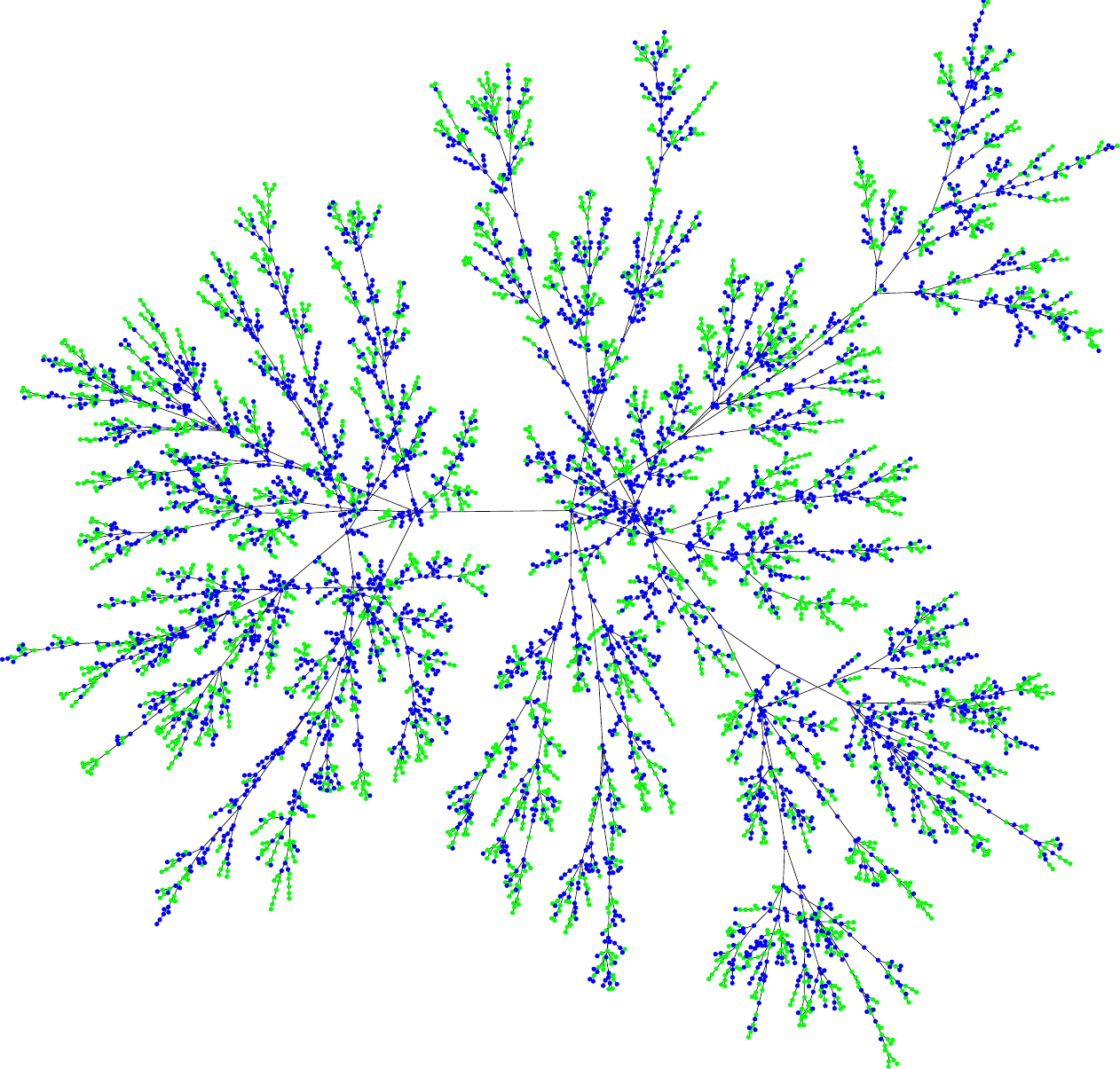}%
\qquad
\includegraphics[width=0.4\linewidth]{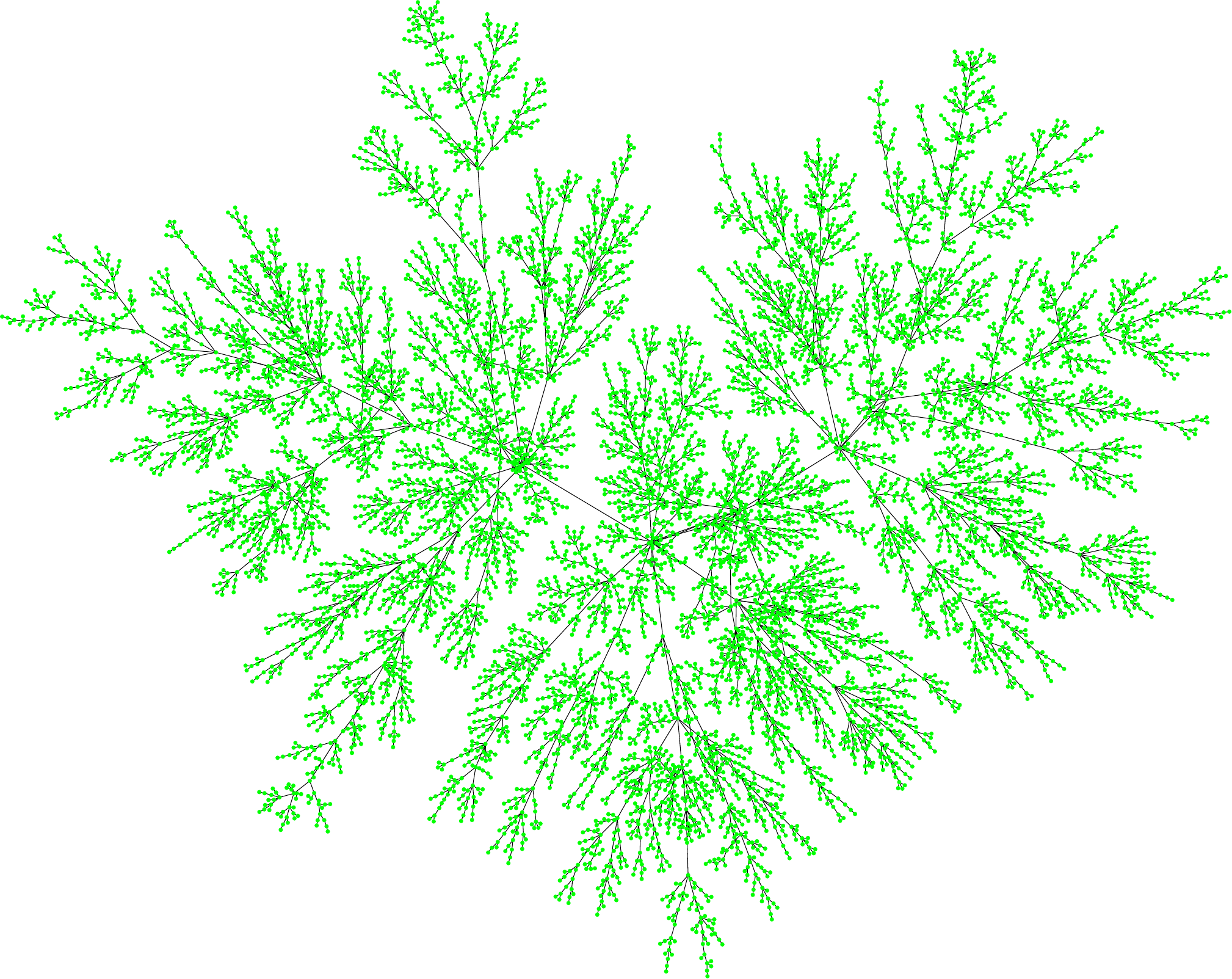}%
\caption{Simulations of the model of uniform attachment with freezing, when the number of active (i.e.~non-frozen) vertices roughly evolves  as the total number of vertices to the power $\alpha$; top: $\alpha=0.2,0.5$, bottom: $\alpha=0.8,1$. Frozen vertices are blue; active  vertices are  green.
}
\end{figure}

\clearpage
\tableofcontents

\section{Introduction}
\label{sec:intro}

We are interested in the geometry of uniform attachment trees with freezing, which we introduced in \cite{BBKK23+}. In the classical model of uniform recursive trees (sometimes also called  uniform attachment trees), trees are constructed recursively  starting with one single vertex, and successively attaching new vertices to a previous existing vertex, chosen uniformly at random. In the model of  uniform attachment trees with freezing, existing vertices can freeze and new vertices cannot be attached to frozen vertices.

The motivations in introducing this model were multiple. In the context of real-world networks such mechanisms naturally appear: for instance, on social media (such as Twitter) a user can choose to set their account to ``private'' which prevents strangers from ``following'' them; also performing an infection-tracing of an SIR epidemics falls within this framework (see \cite{BBKK23+}). Second, this model presents interesting mathematical features: for instance, it extends both uniform recursive trees and uniform plane trees, and in \cite{BBKK23+} universal bounds on the height have been obtained.

The main purpose of this paper is to study scaling limits of uniform attachment trees with freezing, in the specific regime where the number of ~non-frozen vertices roughly evolves  as the total number of vertices to a given power.

\paragraph{Uniform attachment with freezing.} Let us recall the model, which is parametrized by  a deterministic sequence $\Xb=(\X_i)_{i \geq 1}$  of elements of $\{-1,+1\}$. Starting from a unique active vertex, we recursively build random trees by reading the elements of the sequence one after the other, by applying a ``freezing'' step when reading $-1$ (which amounts to freezing an active vertex chosen uniformly at random) and a ``uniform attachment'' step when reading $+1$ (which amounts to attaching a new vertex to an active vertex chosen uniformly at random).  For every $n \geq 1$ we denote by $\T_n(\Xb)$ the random tree recursively built in this fashion after reading the first $n$ elements of $\Xb$ (see Sec.~\ref{ssec:def} for a precise definition and Fig.~\ref{fig exemple arbre recursif avec gel} for an example).

Let us comment on our choice of parametrization. It would have been possible to define the model starting with a random sequence $\Xb$, but the choice of a deterministic sequence defines a more general model.  {Our results can then be applied in the context of a random sequence.}

It is interesting to note that this model encompasses the two classical models of random recursive trees (when $\X_{i}=1$ for every $i \geq 1$) and random uniform plane trees (when the sequence $\XXb =(\XX_{i})_{i \geq 1}$ is a sequence of {non constant} i.i.d.~uniform random variables on $ \{-1,+1\}$, see \cite[{Theorem 2}]{BBKK23+}).

\paragraph{Scaling limits.} In order to make explicit the regime we are interested in, we need to introduce some notation. Given  a sequence $\Xb=(\X_i)_{i \geq 1}$  of elements of $\{-1,+1\}$, we set $S_{0}(\Xb) \coloneqq1$ and for every $n \geq 1$
\begin{equation}
\label{eq:defintro}S_n(\Xb) \coloneqq 1+ \sum_{i=1}^n \X_i; \qquad \tau(\Xb) \coloneqq \inf \{n \geq 1 : S_{n}(\Xb)=0\}.
\end{equation}
Observe that $S_{n}(\Xb)$ represents the number of active vertices of $\T_{n}(\Xb)$. In particular, $\tau(\Xb)$, if it is finite, is the first time when all the vertices are frozen, so that the tree does not evolve anymore. In order to consider large trees, we thus need this time to be large.  For this reason, we consider a sequence $(\Xb^n)_{n \geq 1}$ of sequences of elements of $\{-1,1\}$ such that $\tau(\Xb^n)>n$ for every $n \geq 1$ (if $\tau(\Xb)=\infty$ we can of course take $\Xb^{n}=\Xb$).

We investigate scaling limits for a particular class of sequences $(\Xb^{n})_{n \geq 1}$ where  the number of active vertices, that is $S_{n}(\Xb^{n})$, roughly evolves  as the total number of vertices to the power $\alpha$, for fixed $\alpha \in (0,1]$. {More precisely, we require $\displaystyle (n^{\alpha}/S_{nt}(\Xb^n))_{0 \leq t \leq 1}$ to converge in $\mathrm{L}^1([0,1])$ towards some function $1/f$ together with an $\mathrm{L}^2$-type condition} (see Sec.~\ref{sec:results} for precise conditions). It turns out that the geometry of $\T_{n}(\Xb^{n})$ can be understood within this class, through scaling limits for which we identify different regimes according to the value of $\alpha$. In order to keep this introduction at reasonable length, we postpone the precise statements to Sec.~\ref{sec:results}, and summarize here the essence of our results. 
 \begin{enumerate}[noitemsep,nolistsep]
 \item[--] \emph{Subcritical regime $\alpha \in (0,1/2)$}:  $ \T_{n}(\Xb^{n})/n^{1-\alpha}$ converges in distribution to a deterministic line-segment for the Gromov--Hausdorff--Prokhorov topology (Theorem \ref{thm:sous-critique});
 \item[--] \emph{critical regime $\alpha=1/2$}: $\T_{n}(\Xb^{n})/ \sqrt{n}$ converges in distribution to a random compact measured real tree for the Gromov--Hausdorff--Prokhorov topology, with a condensation phenomenon when $1/{f^{2}}$ is integrable near $0$ (Theorem \ref{thm:critique});
 \item[--] \emph{supercritical regime $\alpha \in (1/2,1)$}:  $ \T_{n}(\Xb^{n})$ looks like a ``star''  with  branches of random lengths of order $n^{1-\alpha}$ (Theorem \ref{thm:sur-critique}).
 \end{enumerate}
 Here the terminology ``$X$ looks like $Y$'' means that the joint laws of the graph-distances of $k$ points chosen in an i.i.d. way from $X$ and from $Y$ are ``close''.

\paragraph{{Motivation.}} {Let us give some further motivation:
\begin{enumerate}[noitemsep,nolistsep]
\item[--] it is natural to dvelve into the mathematical aspects of freezing's impacts in dynamically-built random graph models. Our goal is to explore whether a regime exists where scaling limits can be established for the GHP topology (for recursive trees, without freezing, there are notably no scaling limits for the GHP topology);
\item[--] The choice to focus on the quantity $S_{n}$ is natural, since it represents the number of active vertices at time $n$. Also, in the context of the study of the so-called ``infection tree'' of a stochastic SIR dynamics, in which the vertices are individuals and where edges connect two individuals if one has infected the other, the quantity $S_{n}$ represents the number of infected vertices at time $n$;
\item[--] our assumption  that $S_{n}(\Xb^{n})$ roughly evolves  as the total number of vertices to the power $\alpha$ is motivated by the following three compelling reasons: it includes the case where $(S_k(\Xb^{n}))_{0 \le k \le 2n+1}$ is an excursion of a simple random walk of length $2n+1$ (in this case $\alpha=1/2$), when $\alpha=1/2$ the scaling limit closely related to a time non-homogeneous Kingman coalescent process identified by Aldous, and finally it involves a one-parameter family exhibiting  a phase transition;
\item[--] one of the motivations to consider a ``triangular array'' setting, where we consider $S_{n}(\Xb^{n})$ instead of $S_{n}(\Xb)$, due to the need to accommodate cases involving excursions of a simple random walk with lengths dependent on $n$ {(see Sec.~\ref{ssec:CRT})};
\item[--] the assumption that $\displaystyle (n^{\alpha}/S_{nt}(\Xb^n))_{0 \leq t \leq 1}$ converges, in a certain sense, towards some function $1/f$ is motivated by its applicability in cases where  $\Xb^{n}$ is an excursion of a a simple random walk (in this case $\alpha=1/2$ and $f$ is a normalized Brownian excursion). Addititionally,  it exhibits a universal behavior.
\end{enumerate}
}

\paragraph{{Main ideas.}}
Our proof is based on an alternative construction of uniform attachment trees with freezing, which we introduced in \cite{BBKK23+}: by reversing time,  uniform attachment trees with freezing can be built using a growth-coalescent process of rooted forests. This extends the well-known connection between recursive trees and the Kingman coalescent {which first appeared in \cite{DR76} (see \cite[Sec.~6]{Dev87}, \cite[Sec.~3]{Pit94}, \cite[Sec.~2.2]{AB15}, \cite{ABE18,Esl21,Esl22} for applications)}. This construction enables us to control distances using classical concentration tools combined with the chaining method. This method is an important technique in concentration theory \cite{Tal05} whose goal is to estimate the maximum of a given function on a given space using a sequence of increasing subspaces. It has found many applications in the study of random metric spaces, see e.g. \cite{Ald91,ADGO17,CH17,Sen19,Bla22,Bla22+}. Moreover, in the case $\alpha=1/2$, the continuous analogue of the alternative construction can be viewed as a time-change of a time non-homogeneous Kingman coalescent process introduced by Aldous.

\paragraph{{{Relation with \cite{BBKK23+}.}}} {Let us make as explicit as possible the dependence of this work on the companion paper \cite{BBKK23+}. Our main results here, concerning scaling limits, only rely on the alternative time-reversed construction of uniform attachment trees with freezing introduced in \cite{BBKK23+} and some easy consequences that follow (recalled in Theorem \ref{thm:samelaw} and Lemmas \ref{lem:accroissementH}, \ref{lem:bunif} and \ref{loi du temps de coalescence} below, as well as \cite[Theorem 3 (3)]{BBKK23+}). This put aside, our two papers are disjoint: here we are interested in the case $\alpha<1$, while the companion paper \cite{BBKK23+} explores two different directions: first, it obtains universal bounds on the height of the tree, and second the study of what can be seen as the boundary case $\alpha=1$ (in this case, $\T_{n}(\Xb^{n})$ looks like a ``{tentacular} bush'' in the sense that {at the first order }two typical vertices are always at the same {deterministic} distance of order
 $\ln(n)$, {smaller that the height of the tree}).
} 

\paragraph{Plan of the paper.} We start in Sec.\ref{sec:results} by stating our main results concerning scaling limits in various regimes.  Sec.~\ref{sec:discret} then recalls 
an alternative construction of $\T_{n}$, based on time-reversal, through a growth-coalescent process of rooted forests, which we have introduced in \cite{BBKK23+}. We then obtain in Sec.~\ref{sec:distances} general estimates for distances in $\T_{n}$, and introduce in particular another distance based on coalescence times. This enables us to establish our scaling limit results, first in the non-critical cases $\alpha \neq 1/2$ in Section \ref{sec:noncritical} and then in the critical case $\alpha=1/2$ in Section \ref{sec:critical} {(in particular Sec.~\ref{sec:deflim} defines the limiting object and gives a more detailed description of the proof approach)}. Finally, Appendix \ref{sec:background} contains background on different topologies that we use and Appendix \ref{sec:leaf} contains a tightness criterion.

\paragraph{Acknowledgments.} We are grateful to Christina Goldschmidt for stimulating discussions at early stages of this work. We thank the wonderful organization committee (Serte Donderwinkel, Christina Goldschmidt, Remco van der Hofstad, and Joost Jorritsma) of the RandNET Summer School and Workshop on Random Graphs, where this work was initiated.  {We also thank the referee for their careful reading and useful comments.}

\section{Scaling limits: main results}
\label{sec:results}

We consider a sequence $(\Xb^n)_{n \geq 1}$ of sequences of elements of $\{-1,1\}$ such that $\tau(\Xb^n)>n$ for every $n \geq 1$ (if $\tau(\Xb)=\infty$ we can of course take $\Xb^{n}=\Xb$ for every $n \geq 1$), and set $\T_{n}=\T_{n}(\Xb^{n})$. We keep the notation introduced in Sec.~\ref{sec:intro}, recall in particular \eqref{eq:defintro}. To simplify the notation, we set $S_k^n = S_k(\Xb^n)$ for every  $0 \leq  k\leq n$ and set $S_{t}^{n} = S_{\lfloor t \rfloor}^n $ for all real number $t \in [0,n+1)$.
We also set
\[ \h_n^+=\sum_{i=1}^n \frac{1}{S^{n}_i} \mathbbm 1_{\{\X^{n}_i=1\}}.\]
{The reason why the quantity $ \frac{1}{S^{n}_i} \mathbbm 1_{\{\X^{n}_i=1\}}$ appears is essentially that it represents the probability that at step $i$ the degree of a given active vertex changes.}

We study the geometry of $\T_{n}$ under certain assumptions, which we now state. Given $\alpha\in (0,1)$ and a measurable function $f: [0,1] \rightarrow \R_{+}$ such that $1/f \in \mathrm{L}^1([0,1]) \cap \mathrm{L}^2_\mathrm{loc}((0,1))$, we consider the following set of conditions:

\vbox{
	\begin{mdframed}[linecolor=black!60]
		\begin{description}[topsep=0pt,itemsep=-1ex,partopsep=1ex,parsep=1ex,labelwidth=1.6cm,leftmargin=!,align=CenterWithParen]
			\item[\textcolor{link}{\optionaldesc{$(\mathsf{Lim}^{\alpha}_{1/f})$}{hyp:conv1surf}}] $\displaystyle (n^{\alpha}/S^n_{ nt })_{0 \leq t \leq 1}$  converges in $\mathrm{L}^1([0,1])$ towards $1/f$;
			\item[\textcolor{link}{\optionaldesc{$(\mathsf{Lim}^{\alpha}_{1/f^2})$}{hyp:conv1surf2}}]  For all $0<a<b<1$ we have $\displaystyle\lim_{n\to \infty} \int_a^b \left(\frac{n^\alpha}{S^n_{nt}} \right)^2 dt
			= \int_a^b \frac{\mathrm{d}t}{f(t)^2}.$
		\end{description}
	\end{mdframed}
}
Notice that given \ref{hyp:conv1surf}, the condition \ref{hyp:conv1surf2} is equivalent to the convergence of $(({n^\alpha}/{S^n_{nt}} )^2)_{t \in [0,1]}$ towards $1/f^2$ in $\mathrm{L}^1_{\mathrm{loc}}((0,1))$. {Roughly speaking, the condition \ref{hyp:conv1surf} is related to the convergence of the heights of vertices, while the condition \ref{hyp:conv1surf2} is related to the behavior of distances between pairs of vertices.  Besides, one can show that when $\alpha \ge 1/2$, the condition \ref{hyp:conv1surf2} implies that for all $0<a<b<1$, the minimum $\min_{an\le k \le bn} S^n_k$ tends to $\infty$ as $n \to \infty$. It is for instance a consequence of Equation \eqref{eq:cvminSinfini} in the proof of Lemma \ref{lem:hnalpha}.}

We first identify the height of $\T_{n}$ under these assumptions:

\begin{theorem}
\label{thm:heightalpha}
Assume that $\alpha\in{(0,1)}$ and that $f:[0,1]\mapsto \R^+$ satisfies
\ref{hyp:conv1surf} and \ref{hyp:conv1surf2}. Then for all $p\geq1$:
\[
\frac{\Height(\T_{n})}{n^{1-\alpha}} \cvLp[p]  \int_{0}^{1} \frac{\mathrm{d}t}{2f(t)}.
\]
\end{theorem}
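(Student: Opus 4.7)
\emph{Step 1 (Independent Bernoulli decomposition).} The plan is to reduce $\Height(\T_n)$ to a maximum of sums of independent Bernoulli variables via the time-reversed construction of Section~\ref{sec:discret}, then combine Bernstein's inequality with a union bound. For each step $i$ with $\X_i^n=1$, let $D_i$ be the depth of the vertex $v_i$ born at step $i$, so $\Height(\T_n)=\max_i D_i$. Reading the dynamics backward from $v_i$, let $Y_j$ denote the ancestor of $v_i$ active at time $j\leq i$: then $D_i$ equals the number of backward jumps, i.e.\ the number of $k\leq i$ with $\X_k^n=1$ and $Y_k=v_k$. Exchangeability of the $S_j^n$ active vertices conditional on $\Xb^n$ keeps $Y_j$ uniform among them and makes the jump indicators conditionally independent Bernoullis with parameters $\mathbbm{1}_{\X_k^n=1}/S_k^n$. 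Consequently, conditionally on $\Xb^n$,
\[
D_i \stackrel{d}{=} 1 + \sum_{k=1}^{i-1} B_k, \qquad B_k \sim \mathrm{Ber}\bigl(\mathbbm{1}_{\X_k^n=1}/S_k^n\bigr) \text{ independent,}
\]
so $\E[D_i\mid\Xb^n]=1+h_{i-1}^+$ and $\mathrm{Var}(D_i\mid\Xb^n)\leq h_n^+$.

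\emph{Step 2 (Asymptotics of $h_n^+$).} I verify --- this should be part of Lemma~\ref{lem:hnalpha} --- that $h_n^+/n^{1-\alpha}\to\int_0^1 \mathrm{d}t/(2f(t))$. Setting $h_n^-:=\sum_i\mathbbm{1}_{\X_i^n=-1}/S_i^n$, the identity $h_n^++h_n^-=\sum_{i=1}^n 1/S_i^n$ is a Riemann sum to which \ref{hyp:convf} applies on every $[\delta,1]$, while \ref{hyp:tight} controls the $[0,\delta]$-contribution uniformly in $n$ and ensures $1/f\in L^1$; this yields $n^{\alpha-1}(h_n^++h_n^-)\to\int_0^1 \mathrm{d}t/f(t)$. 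The difference $h_n^+-h_n^-=\sum_i \X_i^n/S_i^n$ is $O(\log S_n^n)=O(\log n)=o(n^{1-\alpha})$ by a telescoping/Abel-summation argument, so $h_n^\pm\sim n^{1-\alpha}\int_0^1 \mathrm{d}t/(2f(t))$.

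\emph{Step 3 (Concentration, union bound, $L^p$).} Bernstein's inequality applied conditionally on $\Xb^n$ gives, for each $i$ and $t>0$,
\[
\P\bigl(|D_i-\E[D_i\mid\Xb^n]|\geq t \,\bigm|\, \Xb^n\bigr) \leq 2\exp\bigl(-t^2/(2h_n^++2t/3)\bigr).
\]
Taking $t=K\sqrt{h_n^+\log n}$ with $K$ large and a union bound over the $\leq n$ choices of $i$ force $\max_i D_i\leq h_n^++o(n^{1-\alpha})$ with probability at least $1-n^{-Q}$ for any prescribed $Q$, since $\sqrt{h_n^+\log n}=o(n^{1-\alpha})$ for $\alpha<1$. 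The matching lower bound follows by applying the same inequality to the single $D_{i^*}$ with $i^*:=\max\{i:\X_i^n=1\}$, for which $h_{i^*-1}^+=h_n^+-o(n^{1-\alpha})$. $L^p$-convergence then follows from the deterministic bound $\Height(\T_n)\leq n$ by choosing $Q>p\alpha$.

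\emph{Main obstacle.} The core technical point is a clean derivation of the independent-Bernoulli decomposition from the backward coalescent construction of Section~\ref{sec:discret}; once in hand, the rest is a standard concentration/chaining argument, with \ref{hyp:tight} being precisely tailored to handle the near-$0$ singularity of $1/f$ in Step~2.
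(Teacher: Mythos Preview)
Your approach is essentially the same as the paper's, just unpacked: the paper invokes \cite[Theorem~2]{BBKK23+} for $\Height(\T_n)/\h_n^+\to 1$ in $\Lp^p$ and then Lemma~\ref{lem:hnalpha}(iii) for the asymptotics of $\h_n^+$, whereas you re-derive both ingredients directly. Your Step~1 is exactly the content of Lemma~\ref{lem:accroissementH} and \eqref{eq:H0} (the ``exchangeability'' heuristic you sketch is made precise there via Algorithm~\ref{algo2}), and your Step~3 is the standard Bennett/Bernstein plus union-bound argument that underlies the companion-paper result.

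There is one imprecision in Step~2 worth flagging. Your claim that $\h_n^+-\h_n^-=\sum_i \X_i^n/S_i^n=O(\log S_n^n)$ by telescoping is not quite right: comparing $\X_i/S_i$ with $\log S_i-\log S_{i-1}$ leaves an error of order $1/(S_i^n)^2$ per step, so one only gets
\[
\h_n^+-\h_n^- = \log S_n^n + O\!\left(\sum_{i=1}^n \frac{1}{(S_i^n)^2}\right).
\]
For $\alpha<1/2$ the second term is of order $n^{1-2\alpha}$, which is much larger than $\log n$. The conclusion $\h_n^+-\h_n^-=o(n^{1-\alpha})$ nevertheless survives: split the sum according to $S_i^n\gtrless\delta n^\alpha$, bound $1/(S_i^n)^2\le 1/S_i^n$ on the small part and use \ref{hyp:tight}, and bound by $\h_n/(\delta n^\alpha)=O(n^{1-2\alpha})$ on the large part. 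Note in particular that \ref{hyp:tight} is genuinely needed here, not just for the $\h_n^++\h_n^-$ half. The paper's own proof of Lemma~\ref{lem:hnalpha}(iii) sidesteps this entirely by treating $\h_n^+$ directly via the weak convergence of the empirical measure of $+1$ steps (Lemma~\ref{lem:hnalpha}(i)), which is arguably cleaner than the telescoping route.

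A second minor point: in your Riemann-sum argument for $\h_n^++\h_n^-$, restrict to $[\delta,1-\delta]$ rather than $[\delta,1]$, since \ref{hyp:f} allows $f(1)=0$; \ref{hyp:tight} then handles both endpoints.
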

This follows from \cite[Theorem 3 (3)]{BBKK23+}, in virtue of which the convergence $\Height(\T_{n})/\h_{n}^{+} \rightarrow 1$ holds in $\mathbb{L}^{p}$, combined with estimating  the asymptotic behavior of $\h_{n}^{+}$.

\subsection{Subcritical regime \texorpdfstring{$\alpha \in (0,1/2)$}{}}

\begin{theorem}
\label{thm:sous-critique}
Assume that $\alpha\in{(0,1/2)}$ and that $f:[0,1]\mapsto \R^+$ satisfies \ref{hyp:conv1surf} and \ref{hyp:conv1surf2}. Then the convergence
\[
\frac{1}{n^{1-\alpha}} \cdot \T_{n}  \quad \mathop{\longrightarrow}^{(d)}_{n \rightarrow \infty} \quad \left[ 0, \int_0^1\frac{\mathrm{d}t}{2f(t)} \right]
\]
holds in distribution for the Gromov--Hausdorff--Prokhorov topology, where the segment $\left[ 0, \int_0^1 1/(2f(t)) {\mathrm{d}t}  \right]$ is equipped with the image measure of the uniform measure on $[0,1]$ by the mapping $x\mapsto \int_0^x\frac{\mathrm{d}t}{2f(t)}$.
\end{theorem}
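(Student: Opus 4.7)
The plan is to combine Theorem~\ref{thm:heightalpha}, which fixes the limit length $L := \int_0^1 dt/(2f(t))$, with a thinness estimate showing that at scale $n^{1-\alpha}$ every vertex of $\T_n$ lies close to any geodesic $\Gamma_n$ from the root to a deepest vertex. Define $\phi_n : \T_n \to [0,L]$ by $\phi_n(v) := \min(\h(v)/n^{1-\alpha}, L)$, and consider the correspondence that pairs each $v \in \T_n$ with $\phi_n(v)$ and each $x \in [0,L]$ with a vertex of $\Gamma_n$ at height $\lfloor x n^{1-\alpha} \rfloor$; this is surjective onto $[0,L]$ as soon as $\Height(\T_n) \geq (L - o(1)) n^{1-\alpha}$, which Theorem~\ref{thm:heightalpha} furnishes. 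Writing $d_{\T_n}(u,v) = \h(u) + \h(v) - 2\h(\mathrm{MRCA}(u,v))$ and performing a short case analysis on the positions of the ``feet'' of $u$ and $v$ on $\Gamma_n$, one checks that $\min(\h(u),\h(v)) - \h(\mathrm{MRCA}(u,v)) \leq \sup_{w \in \T_n} d_{\T_n}(w, \Gamma_n)$, so that the rescaled distortion of the correspondence is bounded by $(2/n^{1-\alpha}) \sup_w d_{\T_n}(w, \Gamma_n)$. The proof thus reduces to two statements: \emph{(i)} $\sup_w d_{\T_n}(w, \Gamma_n) = o_{\P}(n^{1-\alpha})$, and \emph{(ii)} the pushforward of the uniform vertex measure on $\T_n$ by $\phi_n$ converges weakly (in probability) to the measure prescribed in the statement.

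For \emph{(i)}, the condition $\alpha < 1/2$ enters essentially. Using the reverse-time growth--coalescent construction of Section~\ref{sec:discret} and the distance estimates of Section~\ref{sec:distances}, the distance $d_{\T_n}(w, \Gamma_n)$ can be realised (up to lower-order error) as a sum of $O(n^\alpha)$ independent height increments of expected magnitude $O(1/S^n_i) = O(n^{-\alpha})$, corresponding to the reverse-time $+1$-steps needed for the lineage of $w$ to merge into $\Gamma_n$. Standard exponential concentration then yields $d_{\T_n}(w, \Gamma_n) = O(n^{\alpha/2} \log n) = o(n^{1-\alpha})$ with very high probability, the decisive gap being $1 - 2\alpha > 0$. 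Uniform control over $w$ is then obtained via the chaining method mentioned in the introduction (after \cite{Tal05}): partition time into $O(\log n)$ dyadic scales, bound the maximum excursion within each scale by a union bound, and combine the contributions.

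For \emph{(ii)}, observe that since $S^n_n = O(n^\alpha) = o(n)$, the tree $\T_n$ has $1 + \#\{i \leq n : \X^n_i = 1\} = n/2 + o(n)$ vertices, and hypothesis~\ref{hyp:convf} forces the $+1$-arrivals to be asymptotically regular, so the normalised arrival time $T_n/n$ of a uniformly chosen vertex $V_n$ converges in law to $U \sim \mathrm{Unif}([0,1])$. The height of $V_n$ equals $1$ plus the depth of its parent, which is a uniform active vertex at time $T_n - 1$; applying Theorem~2 of \cite{BBKK23+} (the underlying $\mathbb{L}^p$ estimate $\Height(\T_m)/\h^+_m \to 1$) to the prefix tree $\T_{\lfloor n T_n \rfloor}$, and using the thinness \emph{(i)} to transfer from the maximum to a typical height, yields $\h(V_n)/n^{1-\alpha} \to \int_0^U ds/(2f(s))$ in distribution, which is precisely the pushforward under $x \mapsto \int_0^x dt/(2f(t))$ described in the statement.

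The main obstacle is the uniform distortion bound in \emph{(i)}: it requires controlling simultaneously, over all vertices, the length of the detour needed to join the spine. The strict inequality $\alpha < 1/2$ is used decisively to make the coalescence scale $n^\alpha$ asymptotically negligible with respect to the branch-length scale $n^{1-\alpha}$. At $\alpha = 1/2$ these two scales balance, which is precisely why the critical regime in Theorem~\ref{thm:critique} produces a genuine non-degenerate random compact limit tree.
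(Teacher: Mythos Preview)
Your overall architecture---build a correspondence to $[0,L]$, bound its distortion by the ``width'' $\sup_w d_{\T_n}(w,\Gamma_n)$, and handle the measure separately---is reasonable and differs from the paper's route, which uses a \emph{birth-time} based correspondence $u\leftrightarrow\int_0^{\birth_n(u)/n}\frac{dt}{2f(t)}$ rather than your height-based one. The paper's engine is Lemma~\ref{lemme ça coalesce vite} (uniform fast coalescence: $\max_{u,v}|\birth_n(u)\wedge\birth_n(v)-\coal_n(u,v)|=o_\P(n)$), combined with Proposition~\ref{height=ok} and the tightness assumption~\ref{hyp:tight}. Both approaches ultimately rely on the same phenomenon, but the paper's correspondence bypasses the need to single out the random spine $\Gamma_n$.

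The genuine gap is in your justification of \emph{(i)}. The claim that $d_{\T_n}(w,\Gamma_n)$ is ``a sum of $O(n^\alpha)$ independent height increments of expected magnitude $O(n^{-\alpha})$'' is not correct: the number of reverse-time $+1$-steps before the lineage of $w$ merges with that of the deepest vertex $v^*$ is not $O(n^\alpha)$ in general, and your resulting bound $O(n^{\alpha/2}\log n)$ does not follow from that heuristic anyway. What actually happens (and what the paper proves) is that coalescence occurs within $o(n)$ steps \emph{uniformly over all pairs}, via a direct union bound using $\prod(1-1/\binom{S_i^n}{2})\le (1-c n^{-2\alpha})^{\e n/3}\to 0$ since $1-2\alpha>0$. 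Converting this $o(n)$ window into an $o(n^{1-\alpha})$ distance contribution then requires the tightness hypothesis~\ref{hyp:tight}, which you never invoke in \emph{(i)}; without it, a short time window can still carry large $\sum 1/S_i^n$ mass. The chaining you sketch is not needed here (the paper reserves it for the critical case $\alpha=1/2$); a plain union bound over the $O(n^2)$ vertex pairs suffices once the per-pair estimate is exponential.

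Your argument for \emph{(ii)} is also more circuitous than necessary: rather than going through prefix-tree heights and then ``transferring from the maximum to a typical height'' via \emph{(i)}, you can use Proposition~\ref{height=ok} directly to get $\h(V_n)\approx\tfrac12\h_n(0,\birth_n(V_n))$ uniformly, and then Lemma~\ref{lem:hnalpha} plus $\birth_n(V_n)/n\Rightarrow U$ finishes.
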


We refer to Sec.~\ref{sec:background} for details concerning the  Gromov--Hausdorff--Prokhorov topology. See Theorem \ref{thm:alpha0} for the case  where $\| S^{n} \|_{\infty}$ is bounded (which roughly speaking may be viewed as the case $\alpha=0$).

  \subsection{Critical regime \texorpdfstring{$\alpha=1/2$}{}}

\begin{theorem}
\label{thm:critique}
Assume that $\alpha=1/2$, that $f:[0,1]\mapsto \R^+$ satisfies  \ref{hyp:conv1surf} and \ref{hyp:conv1surf2} and that $\max_{1 \leq  k \leq n} S^{n}_{k}= \mathcal{O}(\sqrt{n})$. Then the convergence
\[\frac{1}{\sqrt{n}} \cdot \T_{n}  \quad \mathop{\longrightarrow}^{(d)}_{n \rightarrow \infty} \quad \mathcal{T}(f)\]
holds in distribution for the Gromov--Hausdorff--Prokhorov topology, where $\mathcal{T}(f)$ is a random measured compact real tree.
\end{theorem}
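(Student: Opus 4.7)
The plan is to combine finite-dimensional convergence of distances between i.i.d.~sampled vertices with a tightness argument, using crucially the alternative construction of $\T_n$ via a time-reversed growth-coalescent of rooted forests recalled in Section \ref{sec:discret}. First, I would define the limit $\mathcal{T}(f)$ as the random real tree encoded by a time-inhomogeneous Kingman coalescent: under the time-reversal $s \mapsto 1-s$, a particle at time $1$ corresponds to an active vertex of $\T_n$ at step $n$, and the number of blocks at time $s$ mimics $S^n_{n(1-s)}/\sqrt{n} \to f(1-s)$. Two labels $U, V \in [0,1]$ (sampled uniformly) then carry a pairwise distance equal to the coalescence time in a Kingman-type coalescent run at the rate prescribed by $1/f(1-s)^2$; equipping $[0,1]$ with the Lebesgue measure pushed forward produces the measured real tree $\mathcal{T}(f)$. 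The integrability of $1/f$ on $[0,1]$ (Lemma \ref{lem:hnalpha}) guarantees that this object is almost surely compact, while the integrability of $1/f^2$ near $0$ governs whether there is a condensation atom at the root.

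Second, I would prove convergence of finite-dimensional marginals. Sample $k$ vertices $v_1, \dots, v_k$ uniformly in $\T_n$; in the time-reversed forest construction these correspond to $k$ initial roots, and their pairwise graph-distance in $\T_n$ can be expressed using the coalescence times $C_n(i,j)$ together with the number of $+1$ steps occurring before coalescence, via the formula already exploited for the height in Theorem \ref{thm:heightalpha}. Using \ref{hyp:convf} and \ref{hyp:tight} one shows that, rescaled by $1/\sqrt{n}$, the joint law of $(C_n(i,j))_{i<j}$ converges to that of the coalescence times of $k$ uniform points in the time-inhomogeneous coalescent defined above; the ``extra'' contribution coming from the $+1$ steps concentrates around its deterministic value via a second-moment computation for $\h_n^+$ on sub-intervals, exactly as in the proof of Theorem \ref{thm:heightalpha}. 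This yields convergence for the Gromov--Prokhorov topology.

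Third, to upgrade to Gromov--Hausdorff--Prokhorov I would establish tightness through the leaf-based criterion recorded in Appendix \ref{sec:leaf}: it suffices to show that for every $\varepsilon>0$, there exists $K$ such that the $\varepsilon$-neighborhood of any $K$ i.i.d.~sampled vertices carries at least a proportion $1-\varepsilon$ of the mass of $\T_n/\sqrt n$, uniformly in $n$. This reduces to a uniform estimate: picking an extra vertex $v$ uniformly, the distance from $v$ to the nearest of the $K$ sampled vertices is of order $\sqrt n$ only with probability going to $0$ as $K \to \infty$. Such an estimate is obtained by a chaining argument on the coalescent construction, exploiting the concentration bounds on coalescence times derived in Section \ref{sec:distances}, and coupling them across dyadic scales in the spirit of \cite{Ald91,Bla22}. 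Combined with the already-proven bound $\Height(\T_n)/\sqrt n \to \int_0^1 1/(2f)$ from Theorem \ref{thm:heightalpha}, this gives tightness.

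The main obstacle will be the tightness step in the condensation regime where $1/f^2$ is not integrable near $0$: there, a macroscopic proportion of vertices can accumulate inside a bounded neighborhood of the root of $\mathcal{T}(f)$ (the initial root of the forest in the reversed construction), and one must prove that this atom in the limiting mass measure is correctly captured by the empirical measure of $\T_n$, rather than producing a long spurious branch. Handling this requires a careful separate analysis of the ``early'' coalescences (those occurring near time $n$ in the forward time, i.e.~near time $0$ in the backward time) and their contribution to the mass measure, which is where the formulation of \ref{hyp:tight} as uniform integrability of $(S^n_{U_n}/n^{\alpha})^{-1}$ becomes decisive.
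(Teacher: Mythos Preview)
Your high-level architecture --- GP convergence via a continuous time-inhomogeneous coalescent, then leaf-tightness via chaining, combined through the criterion of Appendix~\ref{sec:leaf} --- is exactly the paper's approach. However, your formulation of the tightness step is too weak as stated. You propose showing that the $\varepsilon$-neighbourhood of $K$ sampled vertices carries mass $\ge 1-\varepsilon$, equivalently that a \emph{uniform} extra vertex lies within $\varepsilon\sqrt n$ of the sample with high probability as $K\to\infty$. That statement is already implied by the GP convergence in your second step and does not by itself give GHP; what is needed is the Hausdorff version
\[
\lim_{k\to\infty}\limsup_{n\to\infty}\P\Bigl(\max_{v\in\V_n}\min_{1\le i\le k}d^n(v,V^n_i)>\varepsilon\sqrt n\Bigr)=0,
\]
i.e.\ \emph{every} vertex is close to the sample. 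The chaining you mention is precisely the bridge: the paper first proves a quantitative bound for a uniform vertex, $\P(\Delta^n(V,\{V_1,\dots,V_k\})>\ln(k)^2 n/\sqrt k)\le k^{-4}$, and then chains across dyadic scales $k=2^j$, using a union bound over all $\le n$ vertices at the last scale $2^{N_n}\asymp n^{1/3}$. Your sketch treats the typical-vertex estimate as the conclusion rather than the input to chaining, so the GHP upgrade is not actually secured.

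Your ``main obstacle'' paragraph is misdirected on two counts. First, the direction is reversed: condensation (infinite root degree) occurs when $\int_0^{1/2}1/f^2<\infty$, not when it diverges (Theorem~\ref{thm:properties}). Second, and more importantly, the tightness proof in the paper is completely insensitive to whether $1/f^2$ is integrable near $0$: it uses only \ref{hyp:tight}, which concerns $1/f$. There is no mass atom at the root in $\T(f)$ --- the mass measure is supported on the leaves --- so the ``spurious branch'' scenario does not arise. The place where the dichotomy on $\int_0 1/f^2$ does enter is the GP step: when this integral is finite, some of the $k$ sampled particles may fail to coalesce before time $0$, so the limiting genealogy is a forest with $r+1\ge 1$ components rather than a single tree, and the density formula (the paper's Proposition~\ref{loi du coalescent généralisé}) has to accommodate this. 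Finally, in your description of the limit, $f$ enters as the pairwise merge \emph{rate} $1/f(t)^2$, not as the number of blocks; and the birth times of the particles are the rescaled freezing times of uniform vertices, not the active vertices at step~$n$.
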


The proof is divided in two steps: first the identification of the limit through finite-dimensional distributions by constructing a continuous analog of Algorithm \ref{algo2}, second establishing tightness for the Gromov--Hausdorff-Prokhorov topology by relying on the  chaining method.  {Let us mention that the convergence of finite-dimensional distributions holds without the assumption $\max_{1 \leq  k \leq n} S^{n}_{k}= \mathcal{O}(\sqrt{n})$, which is only used in the proof of tightness. It is unclear whether this assumption can be removed.}

We show that this tree is similar in many aspects to the Brownian tree introduced by Aldous \cite{Ald91} (see Sec.~\ref{sec:critical} for background concerning real trees). Specifically, we establish the following properties concerning the geometry of $ \mathcal{T}(f)$:

\pagebreak[2]
\begin{theorem}
\label{thm:properties}
The following assertions hold almost surely.
\begin{enumerate}
\item[(1)] The mass measure on $\T(f)$ has full support and gives full measure to the set of its leaves.
\item[(2)] $\T(f)$ is binary if and only if $\int_0^{1/2} \frac{1}{f^2}=\infty$. When $\int_0^{1/2} \frac{1}{f^2}<\infty$, the root has infinite degree and all the other branchpoints are binary.
\end{enumerate}
\end{theorem}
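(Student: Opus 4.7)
The plan is to work with the continuous analog of Algorithm \ref{algo2} that arises as the scaling limit in the proof of Theorem \ref{thm:critique}. There, $\T(f)$ is coded by a time-inhomogeneous Kingman-type coalescent indexed by $s\in (0,1]$ in which each pair of blocks coalesces at a rate of order $1/f(s)^2$, and the mass measure on $\T(f)$ is the weak limit of the empirical measures carried by the sampled points used to code the tree.

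For part (1), I will first show that two independent samples $U_1,U_2$ from the mass measure correspond to two distinct lineages of the coalescent whose coalescence time $C(U_1,U_2)$ is almost surely strictly less than the sampling time, so that $U_1\neq U_2$ almost surely and neither lies on the ancestral line of the other. By Fubini, a mass-typical point has no descendant in the mass measure and is therefore almost surely a leaf. Full support then follows from the density in $\T(f)$ of the countable family of coalescent lineages generating the empirical measures, since the mass measure is carried by the closure of this family and hence charges every open ball.

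For part (2), I will analyze the coalescent separately at positive times and near $s=0$. Away from $0$, the process is a genuine Kingman-type coalescent with locally bounded rates, so with probability one no two coalescences occur simultaneously and at most two blocks merge at any given time: every branchpoint of $\T(f)$ at a time $s\in (0,1)$ is binary. The degree of the root is then governed by the asymptotics of the number of blocks $N(s)$ as $s\to 0^+$. A Poisson computation shows that if $k$ specified blocks are alive at a small $s_0$, then the probability they have all merged into a single block by time $s\in(0,s_0)$ is bounded below by $1-\binom{k}{2}\exp\!\left(-c\int_{s}^{s_0} 1/f(u)^2\,\mathrm{d}u\right)$ for an explicit $c>0$. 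Hence when $\int_0^{1/2} 1/f^2=\infty$ we obtain $N(s)\to 1$ as $s\to 0$, so the ancestral spine above every branchpoint is a simple arc and all branchpoints remain binary. When $\int_0^{1/2} 1/f^2<\infty$, a Borel--Cantelli argument on a sequence of small disjoint intervals accumulating at $0$ shows that infinitely many distinct lineages survive uncoalesced down to $s=0$; combined with exchangeability and the absence of simultaneous coalescences, this places an infinite-degree branchpoint at the root while every positive-time branchpoint stays binary.

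The main obstacle is to turn the heuristic pairwise rate $1/f(s)^2$ into rigorous Poisson bounds that are uniform in $s$ near $0$ and compatible with the time-change used to derive the continuous coalescent from the discrete model, and then to rule out, via exchangeability, any accumulation of extra branchpoints at $s=0^+$ beyond those coming from lineages that never coalesce before time $0$.
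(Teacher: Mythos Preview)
Your proposal is correct and follows essentially the same route as the paper: both parts are read off directly from the continuous coalescent of Section~\ref{coalescent continu}, using that coalescence times are a.s.\ strictly smaller than birth times for (1), and that a given pair of blocks alive at time $t>0$ remains uncoalesced down to $0$ with probability $\exp\bigl(-\int_0^t 1/f^2\bigr)$ for (2) (the paper handles the convergent case by the same device you sketch, selecting one particle per small interval $[\e_{k+1},\e_k]$ and bounding the product of non-coalescence probabilities). The ``main obstacle'' you flag is not one here: the continuous coalescent is defined rigorously in Section~\ref{coalescent continu} with exact pairwise rate $1/f(s)^2$, so no uniform Poisson approximation from the discrete model is needed.
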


Observe that, quite surprisingly, {a condensation phenomenon may occur; in particular} the {topological} class of $\T(f)$ depends on the integrability of $1/f^{2}$ near $0$. {This behavior is new among random trees with ``large'' degrees, such as stable trees \cite{LGLJ98,DLG02} (where there are infinitely many vertices with ``large'' degrees, with existence of GHP scaling limits), non-generic Bienaymé trees \cite{JS11,Jan12,Kor15}  and Cauchy-Bienaymé-trees \cite{KR19} (where there is a unique vertex with macroscopic degree, but without existence of nontrivial GHP scaling limits), simply generated trees with superexponential branching weights \cite{JJS11} (where there is a unique vertex with macroscopic degree, but without existence of nontrivial  GHP scaling limits).}

Let us mention that it is not too hard to check that  almost surely the Minkowski, packing and Hausdorff  dimensions of $\mathcal{T}(f)$ are at least equal to $2$. However, the upper dimensions depend on the the fine behavior of $f$ near $0$, and will be investigated in future work.

It turns out that by appropriately choosing $f$ one can recover the Brownian CRT.
\begin{corollary}
\label{cor:CRT}
If $\mathbbm{e}$ is a normalized Brownian excursion then $\T(\mathbbm{e})$ has the same law as the Brownian CRT.
\end{corollary}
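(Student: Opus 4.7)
The plan is to apply Theorem~\ref{thm:critique} to a well-chosen \emph{random} sequence $(\XXb^n)_{n\geq 1}$ for which the tree $\T_n(\XXb^n)$ is a uniform plane tree, so that Aldous's classical scaling theorem identifies its limit as the Brownian CRT, and for which the associated walk $S^n/\sqrt{n}$ converges to the normalized Brownian excursion. A natural candidate is to take $\XXb^{2n}$ to be an i.i.d.~uniform $\pm 1$ sequence of length $2n$ conditioned on the event $\{\tau(\XXb^{2n}) > 2n,\ S^{2n}_{2n} = 1\}$. By Proposition~12 of \cite{BBKK23+}, $\T_{2n}(\XXb^{2n})$ is then distributed as a uniform plane tree of $n+1$ vertices, so Aldous's theorem gives $\T_{2n}(\XXb^{2n})/\sqrt{n} \cvloi c \cdot \T_{\mathrm{CRT}}$ in the Gromov--Hausdorff--Prokhorov sense for an explicit constant $c>0$; simultaneously, a classical invariance principle for walk bridges conditioned to stay positive gives $(S^{2n}_{2nt}/\sqrt{2n})_{t\in[0,1]} \cvloi \mathbbm{e}$.

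Next, I would use Skorokhod's representation to realize the latter convergence almost surely on a common probability space. On a set of full probability, viewed as a deterministic sequence, $(\XXb^{2n})_{n\geq 1}$ satisfies \ref{hyp:f} and \ref{hyp:convf} with $\alpha=1/2$ and $f=\mathbbm{e}$. Condition \ref{hyp:tight} reduces to uniform integrability of $\sqrt{n}/S^{2n}_{U_{2n}}$ where $U_{2n}$ is uniform on $\{1,\ldots,2n\}$, which I would establish using classical fluctuation estimates for discrete excursions at low heights, for instance via comparison with a Bessel bridge or via a local limit theorem for the conditioned walk. Theorem~\ref{thm:critique}, applied conditionally on each such realization, then yields $\T_{2n}(\XXb^{2n})/\sqrt{2n} \cvloi \T(\mathbbm{e})$ after integrating out the internal randomness of the algorithm. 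Comparing the two expressions for the marginal law of $\T_{2n}(\XXb^{2n})/\sqrt{2n}$ identifies $\T(\mathbbm{e})$ with a constant multiple of the Brownian CRT in distribution. The natural scaling identity $\T(\lambda f) \stackrel{d}{=} \lambda \cdot \T(f)$ for $\lambda>0$---which should be inherited directly from the time-changed Kingman coalescent interpretation of the construction mentioned in the introduction---then fixes the constant and yields the stated identification.

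The main obstacle I anticipate is the almost-sure verification of \ref{hyp:tight}. Since $\mathbbm{e}$ vanishes at both endpoints, the rescaled discrete excursion $S^{2n}/\sqrt{2n}$ takes small values in neighborhoods of $t=0$ and $t=1$, and one must quantify the contribution of those regions in a manner robust with respect to the realization of $(\XXb^{2n})_{n\geq 1}$. A sharp control on the number of visits of the discrete excursion to fixed low levels, together with the explicit boundary behavior of the Brownian excursion, should suffice. Once this tightness estimate is in place, the bookkeeping of normalization constants between Aldous's convention for the CRT and the construction of $\T(\mathbbm{e})$ is a straightforward calculation.
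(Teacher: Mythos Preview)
Your overall strategy is exactly the one the paper uses: take the random sequence coding a simple random walk excursion, invoke the fact (from \cite{BBKK23+}) that the resulting tree is a uniform plane tree, and compare the two scaling limits. The paper carries this out with the conditioning $\tau(\Xb^{2n+1})=2n+1$ rather than your $\{\tau>2n,\,S^{2n}_{2n}=1\}$, which is cosmetic, and it verifies \ref{hyp:tight} concretely via Kemperman's formula and the local limit theorem to get the estimate $\P\bigl(n^{-1/2}\sum_{i\leq n}(S^n_i)^{-1}\mathbbm 1_{\{S^n_i\leq\delta\sqrt n\}}\geq\varepsilon\bigr)\leq C\delta^2/\varepsilon$, then upgrades to an almost-sure statement by Skorokhod; this is sharper than the ``classical fluctuation estimates'' you gesture at, but your sketch is not wrong.

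There is, however, a genuine error in your last step. The identity $\T(\lambda f)\stackrel{d}{=}\lambda\cdot\T(f)$ is false. Replacing $f$ by $\lambda f$ does two things simultaneously: it multiplies the length element $\tfrac{1}{2f(t)}\,\mathrm{d}t$ by $\lambda^{-1}$, \emph{and} it multiplies the pairwise coalescence rate $1/f(t)^2$ by $\lambda^{-2}$. The second effect changes the law of the coalescence times $C_{j,\ell}$ themselves (see the density in Proposition~\ref{loi du coalescent généralisé}), so the genealogical tree has a different combinatorial law, not merely a rescaled metric. Already at the level of the height this fails in direction: by Theorem~\ref{thm:heightalpha} the height of $\T(f)$ is $\int_0^1\tfrac{1}{2f}$, so $\mathrm{Height}(\T(\lambda f))=\lambda^{-1}\,\mathrm{Height}(\T(f))$, not $\lambda$ times it; and even $\T(\lambda f)\stackrel{d}{=}\lambda^{-1}\T(f)$ is false because of the change in coalescence law.

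The paper sidesteps the whole constant issue by using the \emph{same} normalisation on both sides: $\tfrac{1}{\sqrt{2n+1}}\T_{2n+1}$ converges to the Brownian CRT (in the convention used) and, by Theorem~\ref{thm:critique} applied with the same index $2n+1$, to $\T(\mathbbm e)$. No rescaling of $f$ is ever invoked. You should do the same: simply match the normalisations rather than appeal to a scaling property that the construction does not have.
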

{Let us comment on the fact that by $\T(\mathbbm{e})$ we mean a random variable whose law is characterized in the following ``annealed'' way:
\[
\esp{F(\T(\mathbbm{e}))}= \int \esp{F (\T(f))} \P_{\mathbbm{e}}(\mathrm{df}),
\]
for every nonnegative functional $F$, where $\P_{\mathbbm{e}}$ is the law of $\mathbbm{e}$.
}
Corollary \ref{cor:CRT} comes from Theorem \ref{thm:critique} applied with a carefully chosen random sequence $(\Xb^n)_{n \geq 1}$ (see Sec.~\ref{ssec:CRT} for details). {This result can also be seen as a consequence of {\cite[Theorem 2]{A98}}.}

\subsection{Supercritical regime \texorpdfstring{$\alpha \in (1/2,1)$}{}}

We denote by $d^{n}$ the graph distance on the vertices of $\T_{n}$.

\begin{theorem}
\label{thm:sur-critique}
Assume $\alpha \in (1/2,1)$ and $f:[0,1]\mapsto \R^+$ satisfies \ref{hyp:conv1surf} and \ref{hyp:conv1surf2}. 
For every $k \geq 2$, conditionally given  $\T_{n}$, let $V^n_1,V^n_2,\ldots, V^n_k$ be independent uniform vertices in $\T_{n}$ {and let $U_1, \ldots, U_n$ be i.i.d. uniform random variables in $[0,1]$}. Also let $V_0^n$ be the root of  $\, \T_n$ and let $U_0=0$. Then the following convergence holds in distribution.
\[
\left( \frac{1}{n^{1-\alpha}} \cdot d^n(V^n_i,V^n_j) \right)_{0\leq i,j\leq k}\cvloi \left(  \int_{0}^{U_{i}}\frac{\mathrm{d}t}{2f(t)} +  \int_{0}^{U_{j}}\frac{\mathrm{d}t}{2f(t)}  \right)_{0\leq i,j\leq k}
\]
{where $d^n$ denotes the graph distance in the tree $\T_n$.}
\end{theorem}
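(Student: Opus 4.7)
My plan is to combine the time-reversed growth-coalescent construction of Sec.~\ref{sec:discret} with the general distance estimates of Sec.~\ref{sec:distances}. The guiding picture is that for $\alpha>1/2$ the tree is sufficiently bushy (of order $n^\alpha$ active vertices at each time, height of order $n^{1-\alpha}$) that two uniform vertices sit on essentially disjoint ancestral lines meeting close to the root, so the pairwise graph distance is asymptotically the sum of the two individual distances to the root.

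The first step is the one-point convergence. A uniform vertex $V^n_i$ of $\T_n$ is attached at a random time $T_i$, uniform over $\{k\le n:\X^n_k=+1\}$; since $|S^n_k|\le n^\alpha=o(n)$, the $+1$-times are equidistributed in $\{1,\dots,n\}$ up to an error of order $n^\alpha$, so $(T_i/n)_{i=1}^k$ converges jointly in distribution to an i.i.d.\ uniform family $(U_i)_{i=1}^k$. Conditionally on $T_i$, the vertex $V^n_i$ is attached to a uniformly chosen active vertex at time $T_i-1$ and its depth is unchanged by subsequent steps. A direct recursion shows that the expected depth of a uniform active vertex at time $\ell$ equals $\sum_{k\le\ell}\mathbbm{1}_{\X^n_k=+1}/S^n_k$; combining this with the $\Lp^p$ concentration underlying Theorem~\ref{thm:heightalpha} and the Riemann-sum convergence provided by \ref{hyp:f}, \ref{hyp:convf}, \ref{hyp:tight}, one obtains $d^n(V^n_0,V^n_i)/n^{1-\alpha}\to \int_0^{U_i} dt/(2f(t))$.

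For the joint convergence I use the identity
\[
d^n(V^n_i,V^n_j)= d^n(V^n_0,V^n_i)+d^n(V^n_0,V^n_j)-2\,d^n(V^n_0,\mathrm{LCA}(V^n_i,V^n_j))
\]
to reduce everything to showing $d^n(V^n_0,\mathrm{LCA}(V^n_i,V^n_j)) = o_{\P}(n^{1-\alpha})$. Letting $N_k(w)$ denote the number of active descendants of $w$ at time $k$, the process $M_k(w):=N_k(w)/S^n_k$ is a martingale with $M_{T_w}(w)=1/S^n_{T_w}$, so that the expected subtree size of $w$ at time $n$ is of order $(n-T_w)/S^n_{T_w}$. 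Since the expected number of common ancestors of two i.i.d.\ uniform vertices (given $\T_n$) equals $|\T_n|^{-2}\sum_{w\in\T_n}|\mathrm{subtree}(w)|^2$, a second-moment computation on the same martingale gives $\E[\sum_{w\ne V^n_0}|\mathrm{subtree}(w)|^2]$ of order $n^{3-2\alpha}$, and the expected LCA depth is thus of order $n^{1-2\alpha}$, which is $o(n^{1-\alpha})$ for every $\alpha>1/2$.

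The main obstacle is making this LCA estimate rigorous over the whole range $\alpha\in(1/2,1)$, especially near the critical value. The heuristic squared-subtree-size computation is naturally governed by $\int 1/f^2$, which may fail to be integrable near $t=0$ when condensation is present -- precisely the phenomenon appearing in Theorem~\ref{thm:critique}. Here the $\alpha$-tightness assumption \ref{hyp:tight} is essential: it allows us to truncate a small initial window $[0,\delta n]$ whose contribution to the depth sums is uniformly negligible after rescaling by $n^{1-\alpha}$, while on the complement $f$ is bounded below and standard Riemann-sum convergence applies. Once the LCA bound is in place, the joint convergence of the pairwise distance matrix follows from the one-point convergence and the continuous mapping theorem.
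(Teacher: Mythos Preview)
Your approach differs from the paper's: you work in the forward (Algorithm~\ref{algo1}) construction and reach the pairwise statement through an LCA/subtree-size argument, whereas the paper works throughout in the backward growth--coalescent construction (Algorithm~\ref{algo2}). There, Proposition~\ref{height=ok} replaces $d^n$ by the coalescence-based distance $\dc^n$, and the supercritical input is Lemma~\ref{lemme ça coalesce en zéro}: $\coal_n(V^n_1,V^n_2)/n\to 0$ in probability. That lemma is a short computation from the explicit law of $\coal_n$ (Lemma~\ref{loi du temps de coalescence}): on $[\varepsilon n,(1-\delta)n]$ one has $\sum 1/\binom{S^n_i}{2}=O\!\big(n^{1-2\alpha}\int_\varepsilon^{1-\delta}1/f^2\big)=o(1)$, the right boundary is absorbed by $\P(\birth_n(V^n_1)\wedge\birth_n(V^n_2)\ge(1-\delta)n)\to\delta^2$, and the left boundary is vacuous since $\coal_n<\varepsilon n$ is the target event. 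The Riemann-sum step then gives both \eqref{eq1 thm sur-critique} and \eqref{eq2 thm sur-critique}.

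Your one-point argument is essentially correct; note however that the concentration you invoke is really the Bennett bound underlying Proposition~\ref{height=ok} (via Lemma~\ref{lem:accroissementH}, which writes the height as a sum of \emph{independent} Bernoullis in the backward picture), not a forward-time martingale statement.

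The genuine gap is the LCA step. The claim $\E\big[\sum_{w\neq V^n_0}|\mathrm{subtree}(w)|^2\big]=O(n^{3-2\alpha})$ is not justified. For vertices $w$ created at early times with $S^n_{T_w}$ bounded, the P\'olya-type martingale gives $\E[|\mathrm{subtree}(w)|^2]$ of order $n^2$, and the number of such $w$ is \emph{not} controlled by \ref{hyp:tight}, which bounds $\sum 1/S^n_i$ below a threshold, not $\sum 1/(S^n_i)^2$. Your last paragraph correctly localises the difficulty near $t=0$, but truncating the \emph{depth sums} is the wrong quantity: that controls $\h_n(1,\varepsilon n)$, not the subtree-size second moments. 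The honest repair is to split on the creation time of the LCA: if $T_{\mathrm{LCA}}>\varepsilon n$, then $S^n_{T_{\mathrm{LCA}}}\ge c_\varepsilon n^\alpha$ and the second-moment sum over such $w$ is genuinely $O(n^{3-2\alpha})$, hence $\P(T_{\mathrm{LCA}}>\varepsilon n)=o(1)$; if $T_{\mathrm{LCA}}\le\varepsilon n$, then the LCA has depth at most $\Height(\T_{\lfloor\varepsilon n\rfloor})$, which is $\h^+_n(1,\varepsilon n)+o(n^{1-\alpha})$ by the uniform height estimate \eqref{4/9/14h}, and this is $o(n^{1-\alpha})$ as $\varepsilon\to 0$ by Lemma~\ref{lem:hnalpha}. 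This works, but note that the first half is precisely the computation in Lemma~\ref{lemme ça coalesce en zéro}, and the second half again leans on the backward-construction height bounds; once made rigorous, your route largely collapses onto the paper's.
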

Intuitively speaking, this tells us that the tree $\T_{n}$ roughly looks like a ``star'' with branches of length $n^{1-\alpha} \int_0^1 1/(2f(t)) {\mathrm{d}t}$.  It is worth noting that  this case gives a natural example of trees growing polynomially in their size, without scaling limits. Indeed, $(\T_{n}/n^{1-\alpha})_{n \geq 1}$ is not tight for the GHP topology (since for $\varepsilon>0$ the number of balls of radius $\varepsilon$ needed to cover this space tends to infinity in probability). However, for another topology closely related to the Gromov--Prokhorov topology (see \cite{ET21,Jan21}),  $\T_{n}/n^{1-\alpha}$ converges to the so-called long dendron $\Upsilon_{\nu}$ (we use the notation of \cite[Example 3.12]{Jan21}) with $\nu$ being the image measure of the uniform measure on $[0,1]$ by the mapping $x \mapsto \int_0^x 1/(2f(t)) {\mathrm{d}t} $.

\section{Trees constructed by uniform attachment with freezing}
\label{sec:discret}

Here we give a precise definition of trees constructed by uniform attachment with freezing, and recall from \cite{BBKK23+} an alternative growth-coalescence algorithm to generate them. We also provide Table \ref{tab:secdiscret} which contains the notation that will be used in the future sections.

\begin{table}[htbp]\caption{Table of the main notation and symbols introduced in Section \ref{sec:discret} and used later.}
\centering
\begin{tabular}{c c p{12cm} }
\toprule
$\N$ && $=\{1,2,3,\dots\}$ positive integers\\
$[n]$ && $=\{1,2,\dots,n\}$ integers between $1$ and $n$\\
$\#A$ && cardinality of a finite set $A$\\
\hline
$\Xb = (\X_n)_{n\in\N}$ && a sequence of elements of $\{-1,1\}$\\
$S_n(\Xb)$ && $=1+\sum_{i=1}^n \Xb_i$\\
$\tau(\Xb)$ && $=\inf \{n \geq 1 : S_{n}(\Xb)=0\}$ \\
\hline
$\T_n(\Xb)$ && tree built at time $n$ by Algorithm \ref{algo1}; $S_{n}(\Xb)$ is its number of active vertices \\
$N_n(\Xb)$ && total number of vertices in $\T_n(\Xb)$; $N_n(\Xb)=(S_n(\Xb)+n+1)/2$ when $n \leq \tau(\Xb)$\\
\hline
$\F_n^n(\Xb),\F_{n-1}^n(\Xb), \ldots, \F_0^n(\Xb)$ && the forest of trees built by Algorithm \ref{algo2}\\

$\T^n(\Xb)$ && $=\F_0^n(\Xb)$ the output of Algorithm \ref{algo2}\\
\hline
$\Fbb_{n}(\Xb)$ && $=\{i\in\llbracket 1,n \rrbracket: \X_{i}=-1\}$ the labels of frozen vertices of $\T^n(\Xb)$\\

$\A_{n}(\Xb)$ && $=\{a_{1},\ldots,a_{S_{n}(\Xb)}\}$ the labels of active vertices of $\T^n(\Xb)$\\

$\V_{n}(\Xb)$ && $=\Fbb_{n}(\Xb)\cup\A_{n}(\Xb)$ the labels of all vertices of $\T^n(\Xb)$\\
\hline
$\birth_n(u)$ && the birth time of $u \in \V_{n}(\Xb)$ in the construction of $\T^n(\Xb)$  by Algorithm \ref{algo2} \\

$\coal_{n}(u,v)$ && the coalescence time between $u,v \in \V_{n}(\Xb)$ in the construction of $\T^n(\Xb)$  by Algorithm \ref{algo2} \\
\hline
 $\H^n_i(u)$  && the height of vertex $u$ in $\F^n_i(\Xb)$\\
\bottomrule
\end{tabular}
\label{tab:secdiscret}
\end{table}

\subsection{Uniform attachment with freezing: recursive construction}
\label{ssec:def}

Given a sequence $\Xb=(\X_n)_{n \geq 1}$  of elements of $\{-1,1\}$, for every $n \geq 1$ the tree $\T_n(\Xb)$ is built by  reading the first $n$ elements of the sequence $\Xb$, namely $\X_1,\dots,\X_n$. Here the trees will be rooted, vertex-labelled and edge-labelled; edges have the label corresponding to their time of appearance and vertices have the label corresponding to their time of freezing or the label ``$a$'' if they are still active (meaning they have not frozen yet). We set $S_{0}(\Xb) \coloneqq1$ and for every $n \geq 1$
\begin{equation}
\label{eq:defSn}S_n(\Xb) \coloneqq 1+ \sum_{i=1}^n \X_i; \qquad \tau(\Xb) \coloneqq \inf \{n \geq 1 : S_{n}(\Xb)=0\}.
\end{equation}

\paragraph{\labeltext[1]{Algorithm 1.}{algo1}}
\begin{compitem}
\item Start with the tree $\T_0(\Xb)$ made of a single root vertex labelled $a$ (vertices labelled $a$ are called active vertices, and the others frozen vertices).
\item For every $n \geq 1$, if $\T_{n-1}(\Xb)$ has no vertices labelled $a$, then set $\T_n(\Xb) = \T_{n-1}(\Xb)$. Otherwise let $V_n$ be a random uniform active vertex of $\T_{n-1}(\Xb)$, chosen independently from the previous ones. Then:
\begin{compitem}
\item[--] if $\X_n=-1$, build $\T_n(\Xb)$ from $\T_{n-1}(\Xb)$ simply by replacing the label $a$ of $V_n$ with the label $n$;
\item[--] if $\X_n=1$;  build $\T_n(\Xb)$ from $\T_{n-1}(\Xb)$ by adding an edge labelled $n$ between $V_n$ and a new vertex labelled $a$. 
\end{compitem}
\end{compitem}
{For $n \geq 0$, we view $\T_n(\Xb)$ as a rooted, double-labelled tree (that is edge-labelled and vertex-labelled). If $N_{n}(\Xb)$ represents the total number of vertices  of $\T_n(\Xb)$, observe that by construction $N_n(\Xb)=(S_n(\Xb)+n+1)/2$ for $0 \leq n \leq \tau(\Xb)$.} 

The law of the random tree $\T_n(\Xb)$ obviously depends on the sequence $\Xb$ (actually it only depends on the first $n$ elements $\X_1,\dots,\X_n$). Notice that, by construction, the sequence of trees $(\T_n(\Xb))_{n\geq 0}$ is  non-decreasing and reaches a stationary state if and only if $\tau(\Xb)<\infty$, in which case it reaches a stationary state for the first time at $\tau(\Xb)$. In this case, $\T_n(\Xb)$ has no more active vertices for the first time at $n = \tau(\Xb)$.

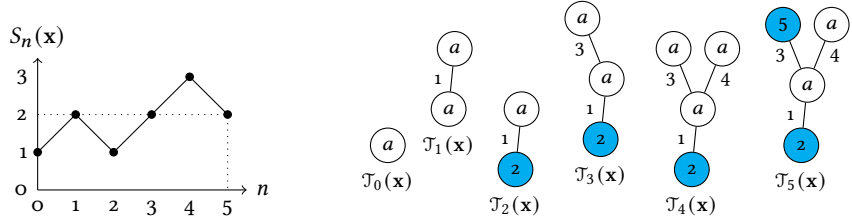
\begin{figure}
\begin{center}
\begin{tikzpicture}[scale = 0.5]
\draw[->] (0,0) -- (0,3.5);
\draw[->] (0,0) -- (5.5,0);

\node[above,font=\footnotesize] at (0,3.5) {$S_n(\Xb)$};
\node[right,font=\footnotesize] at (5.5,0) {$n$};

\node[draw,circle,fill,inner sep = 1pt] at (0,1) {};
\node[draw,circle,fill,inner sep = 1pt] at (1,2) {};
\node[draw,circle,fill,inner sep = 1pt] at (2,1) {};
\node[draw,circle,fill,inner sep = 1pt] at (3,2) {};
\node[draw,circle,fill,inner sep = 1pt] at (4,3) {};
\node[draw,circle,fill,inner sep = 1pt] at (5,2) {};

\draw (0,1)--(1,2)--(2,1)--(3,2)--(4,3)--(5,2);

\node[left,font=\footnotesize] at (0,0) {0};
\node[left,font=\footnotesize] at (0,1) {1};
\node[left,font=\footnotesize] at (0,2) {2};
\node[left,font=\footnotesize] at (0,3) {3};

\node[below,font=\footnotesize] at (0,0) {0};
\node[below,font=\footnotesize] at (1,0) {1};
\node[below,font=\footnotesize] at (2,0) {2};
\node[below,font=\footnotesize] at (3,0) {3};
\node[below,font=\footnotesize] at (4,0) {4};
\node[below,font=\footnotesize] at (5,0) {5};

\draw[dotted] (0,2)--(5,2)--(5,0);
\end{tikzpicture}
\hspace{2em}
\begin{tikzpicture}[scale = 0.8,
sommet/.style = {draw,circle, font=\scriptsize,inner sep=0,minimum size=13pt},
gele/.style = {fill=cyan},
etiquete/.style = {font = \scriptsize}]

\node[sommet] (0) at (-0.5,0.4) {$a$};
\node[etiquete] (t0) at (-0.5,-0.2) {$\T_0(\Xb)$};

\node[sommet] (1) at (0.5,1) {$a$};
\node[sommet] (2) at (0.6,2) {$a$};
\draw (1)--node[left,etiquete] {1} (2);
\node[etiquete] (t1) at (0.5,0.4) {$\T_1(\Xb)$};

\node[sommet,gele] (3) at (1.6,0) {2};
\node[sommet] (4) at (1.7,1) {$a$};
\draw (3) --node[left,etiquete] {1} (4);
\node[etiquete] (t2) at (1.6,-0.6) {$\T_2(\Xb)$};

\node[sommet,gele] (5) at (3,0.5) {2};
\node[sommet] (6) at (3.1,1.5) {$a$};
\node[sommet] (7) at (2.7,2.5) {$a$};
\draw (5) --node[left,etiquete] {1} (6);
\draw (6) --node[left,etiquete] {3} (7);
\node[etiquete] (t3) at (3,-0.1) {$\T_3(\Xb)$};

\node[sommet,gele] (8) at (4.5,0) {2};
\node[sommet] (9) at (4.6,1) {$a$};
\node[sommet] (10) at (4.2,2) {$a$};
\node[sommet] (11) at (5,2) {$a$};
\draw (8) --node[left,etiquete] {1} (9);
\draw (9) --node[left,etiquete] {3} (10);
\draw (9) --node[right,etiquete] {4} (11);
\node[etiquete] (t4) at (4.5,-0.6) {$\T_4(\Xb)$};

\node[sommet,gele] (12) at (6.3,0.4) {2};
\node[sommet] (13) at (6.4,1.4) {$a$};
\node[sommet,gele] (14) at (6,2.4) {$5$};
\node[sommet] (15) at (6.8,2.4) {$a$};
\draw (12) --node[left,etiquete] {1} (13);
\draw (13) --node[left,etiquete] {3} (14);
\draw (13) --node[right,etiquete] {4} (15);
\node[etiquete] (t5) at (6.3,-0.2) {$\T_5(\Xb)$};

\end{tikzpicture}
\caption{On the left is represented the walk $(S_n(\Xb))_{n \geq 0}$ up to time $n=5$ associated with the sequence $\Xb=+1,-1,+1,+1,-1,\dots$. On the right, a possible realisation of the trees $\T_0(\Xb)$ to $\T_5(\Xb)$ given this sequence. Frozen vertices have been colored in blue.}
\label{fig exemple arbre recursif avec gel}
\end{center}
\end{figure}

\subsection{Uniform attachment with freezing: growth-coalescent construction}
\label{ssec:growthcoalescent}

In order to analyse the geometry of uniform attachment trees with freezing,  an alternative time-reversed construction based on a growth-coalescence process of forests, that we introduced in \cite{BBKK23+}, is particularly useful. This algorithm is a generalization of the connection between random recursive trees and Kingman's coalescent which first appeared in \cite{DR76} (see \cite[Sec.~6]{Dev87}, \cite[Sec.~3]{Pit94}, \cite[Sec.~2.2]{AB15},  {\cite{ABE18,Esl21,Esl22}} for applications), in the context of union-find data structures (which are data structures that store a collection of disjoint sets where merging sets and finding a representative member of a set), see e.g.~\cite{KS78}.

\paragraph{\labeltext[2]{Algorithm 2.}{algo2}}
Fix $0 \leq n \leq \tau(\Xb)$. We construct a sequence  $(\F_{n}^n(\Xb),\F_{n-1}^n(\Xb), \ldots, \F_{0}^n(\Xb))$ of forests of rooted, edge-labelled, vertex-labelled, unoriented trees by induction as follows.
\begin{compitem}
\item Let $\F_{n}^n(\Xb)$ be a forest made of $S_{n}(\Xb)$ one-vertex trees labelled $a_{1}, \ldots,a_{S_{n}(\Xb)}$.
\item For every $1\leq i \leq n$, if $\F^{n}_{i}(\Xb)$ has been constructed, define $\F^{n}_{i-1}(\Xb)$ as follows:
\begin{compitem}
\item[--] if $\X_{i}=-1$, $\F^n_{i-1}(\Xb)$ is obtained by adding to $\F^n_{i}(\Xb)$ a new one-vertex tree labelled $i$;
\item[--] if $\X_{i}=1$, let $(T_{1},T_{2})$ be a  pair of different random trees in $\F_{i}^n(\Xb)$ chosen uniformly at random, independently of the previous choices; then $\F^n_{i-1}(\Xb)$ is obtained from $\F^n_{i}(\Xb)$ by adding an edge labelled $i$ between the roots $r(T_{1})$ and $r(T_{2})$ of respectively $T_{1}$ and $T_{2}$, and rooting the tree thus obtained at $r(T_{1})$;
\end{compitem}
\item Let $\T^n(\Xb)$ be the only tree of $\F^n_0(\Xb)$.
\end{compitem}

 \begin{figure}[!ht] \centering
\includegraphics[width=1.05\linewidth]{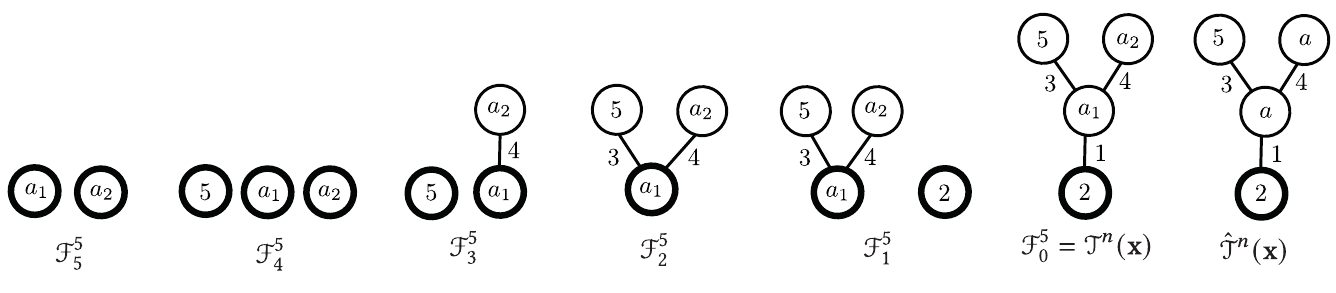}
\caption{An illustration of Algorithm \ref{algo2} with $n=5$ and $(\X_{5},\X_{4},\X_{3},\X_{2},\X_{1})=(-1,1,1,-1,1)$ (this is the same sequence as in Fig.~\ref{fig exemple arbre recursif avec gel}). For example, since $\X_{2}=-1$, $\mathcal{F}^{5}_{1}$ is obtained from $\mathcal{F}^{5}_{2}$ by adding a new tree made of a vertex labeled $2$. Since $\X_{4}=1$, to build  $\mathcal{F}^{5}_{3}$ from  $\mathcal{F}^{5}_{4}$ we have chosen in $\mathcal{F}^{5}_{3}$  two trees  $(T_{1},T_{2})$ with $T_{1}$ being the vertex $a_{1}$ and $T_{2}$ being the vertex $a_{2}$, and  we have added an edge labelled $4$ between the roots $r(T_{1})=a_{1}$ and $r(T_{2})=a_{2}$ of respectively $T_{1}$ and $T_{2}$, and  rooting the tree thus obtained at $r(T_{1})=a_{1}$.}
\label{fig:growthcoalescent}
\end{figure}

\bigskip

Observe that the active vertices of $\T_n(\Xb)$ are all labelled $a$ while the active vertices of $\T^n(\Xb)$ are labelled $a_{1}, \ldots,a_{S_{n}(\Xb)}$ (see Fig.~\ref{fig:growthcoalescent}). It turns out that  $\T_n(\Xb)$ is equal in law to  $\T^n(\Xb)$ when its active vertices are all relabelled $a$. More precisely, denote by $\hat{\T}^n(\Xb)$ the tree obtained from $\T^n(\Xb)$ by relabelling its $S_{n}$ active vertices by $a$ (see the right-most part of Fig.~\ref{fig:growthcoalescent} for an illustration).

\begin{theorem}[Theorem 8 from \cite{BBKK23+}]
\label{thm:samelaw}
The two trees  $\T_n(\Xb)$ and $\hat{\T}^n(\Xb)$ have the same distribution.
\end{theorem}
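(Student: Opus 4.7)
\emph{Proof sketch.} My plan is to proceed by induction on $n \geq 0$. The base case $n=0$ is immediate: both $\T_0(\Xb)$ and $\hat{\T}^0(\Xb)$ consist of a single active vertex (in the second case the lone singleton $a_1$, which becomes $a$ after relabelling).

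For the induction step, assume the equality in law holds at level $n$. The idea is to split one execution of Algorithm~\ref{algo2} at level $n+1$ into two phases: its first step, which processes $\X_{n+1}$ acting on the forest of $S_{n+1}(\Xb)$ initial singletons, and the remaining $n$ steps, which process $\X_n,\X_{n-1},\ldots,\X_1$. If $\X_{n+1}=-1$, the first step merely appends a new frozen singleton labelled $n+1$, leaving a forest of $S_n(\Xb)$ trees (one frozen, the others active singletons). If $\X_{n+1}=+1$, it picks a uniform ordered pair $(a_i,a_j)$ among the $S_{n+1}(\Xb) = S_n(\Xb)+1$ initial singletons and merges them via an edge labelled $n+1$ rooted at $a_i$, again leaving a forest of $S_n(\Xb)$ trees. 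In both cases the remaining $n$ steps are literally one run of Algorithm~\ref{algo2} at level $n$ on a slightly modified initial forest.

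The key ingredient is an exchangeability statement: since every transition in Algorithm~\ref{algo2} makes only uniform choices that depend on the combinatorial structure of the current forest (and not on the specific labels of its active vertices), the law of its output is equivariant under any permutation of the labels of the initial singletons. Combining this invariance with the induction hypothesis, one obtains that $\hat{\T}^{n+1}(\Xb)$ is distributed as follows: take $\hat{\T}^{n}(\Xb)$; then, if $\X_{n+1}=-1$, relabel a uniformly chosen active vertex by $n+1$, and if $\X_{n+1}=+1$, attach a new active child to a uniformly chosen active vertex via an edge labelled $n+1$. After collapsing all active labels to $a$, this is precisely the transition from $\T_n(\Xb)$ to $\T_{n+1}(\Xb)$ in Algorithm~\ref{algo1}, closing the induction. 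The main technical point is the exchangeability lemma itself, which requires careful bookkeeping of how the initial singleton labels propagate through the coalescence steps (in particular, keeping track of which initial label corresponds to the root of each intermediate tree) but is otherwise a direct consequence of the uniformity of the choices at each step.
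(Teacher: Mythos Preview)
The paper does not contain its own proof of this statement: it is quoted verbatim as ``Theorem~7 from \cite{BBKK23+}'' and used as a black box, so there is nothing in the present paper to compare your argument against.

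That said, your inductive sketch is the natural approach and is essentially correct. The one point worth making more explicit is the $\X_{n+1}=+1$ case. There you need more than equivariance under relabelling of the initial singletons: you implicitly use that once the first coalescence creates the pair-tree rooted at $a_i$ with child $a_j$, the vertex $a_j$ is never again a root, hence never participates in any subsequent coalescence; its only effect on the final tree is to sit as a leaf attached to $a_i$ by the edge labelled $n+1$. This ``decoupling'' is what lets you replace the pair-tree by the singleton $a_i$, run the remaining $n$ steps (which is then genuinely a copy of Algorithm~\ref{algo2} at level $n$ on $S_n(\Xb)$ singletons), and reattach $a_j$ at the very end. Exchangeability then tells you that $a_i$ is a uniformly chosen active vertex of the resulting $\hat\T^n(\Xb)$, which matches the Algorithm~\ref{algo1} transition. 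You allude to this bookkeeping in your last sentence, but stating the decoupling step explicitly would make the sketch watertight. The $\X_{n+1}=-1$ case is cleaner, since there all $S_n(\Xb)$ trees after the first step are singletons and the argument reduces purely to label exchangeability.
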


\subsection{Laws of the birth and coalescence times}
\label{ssec:laws of birth and coal}

We now introduce some notation related to birth and coalescence times, and identify their laws in view of the analysis of scaling limits of trees built by uniform attachment with freezing.

{In the rest of the paper we shall use the notation $[n]= \{1,2, \ldots,n\}$.}  First, define
\begin{equation}
\label{eq:labels}
\Fbb_{n}(\Xb) =  \{i \in [n]: \X_{i}=-1\},  \qquad \A_{n}(\Xb)=  \{a_{1}, \ldots, a_{S_{n}(\Xb)}\}, \qquad   \V_{n}(\Xb)= \Fbb_{n}(\Xb) \cup \A_{n}(\Xb).\end{equation}
Note that while $\T^n(\Xb)$ is a random tree, the labels of its vertices are deterministic as they depend only on $\Xb$: $ \A_{n}(\Xb)$ are the labels of the active vertices of  $\T^n(\Xb)$ and $\Fbb_{n}(\Xb)$ are the labels of the frozen vertices of $\T^{n}(\Xb)$. In particular, the elements of $\V_{n}(\Xb)$ will be called vertices of $\T^n(\Xb)$.

Next, for every $u \in \V_{n}(\Xb)$, we denote by $\birth_n(u)$ the largest $i \in \{0,1,\ldots,n\}$ such that $u$ belongs to the forest $\F^n_i(\Xb)$. Explicitly, if $u \in \A_{n}(\Xb)$ is an active vertex then $\birth_n(v)=n$, and if $u \in \Fbb_{n}(\Xb)$ (note that $u$ is then an integer) then $\birth_n(u)=u-1$ (see Fig.~\ref{fig:growthcoalescent} for an example). We say that $\birth_n(u)$ is the \emph{birth time} of $u$, since it encodes the first time when vertex $u$ appears in Algorithm \ref{algo2}.

For  $0 \leq i \leq \birth_n(u)$, denote by $\H^n_i(u)$ the height of vertex $u$ in $\F^n_i$ (that is, the graph distance between $u$ and the root of $\F^{n}_{i}$).

Finally,  for $u,v \in \V_{n}(\Xb)$, we denote by $\coal_{n}(u,v)$ the largest $i \in \{0,1, \ldots,n\}$ such that $u$ and $v$  belong to the same tree in the forest $\mathcal{F}^{n}_{i}(\Xb)$ obtained when building $\T^{n}(\Xb)$ in Algorithm \ref{algo2}.  We say that $\coal_{n}(u,v)$ is the \emph{coalescence time} between $u$ and $v$, since it encodes the first time $u$ and $v$ belong to the same tree  in Algorithm \ref{algo2} (observe that while $\birth_{n}(u)$ is deterministic, $\coal_{n}(u,v)$ is random).

We now state several simple consequences, which will be useful in the analysis of the geometry of $\T^{n}(\Xb)$ (see Section 2.3 in \cite{BBKK23+} for proofs).
\begin{lemma}
\label{lem:accroissementH}
Fix $1 \leq n \leq \tau(\Xb)$  and  $v \in \V_{n}(\Xb)$. For  every $1 \leq i \leq \birth_{n}(v)$:
\[
\proba{\H^n_{i-1}(v)-\H^n_i(v)=1 | \F^n_n, \ldots, \F^n_i} = \frac{1}{S_{i}}\mathbbm{1}_{\{\X_i=1\}}.
\]
\end{lemma}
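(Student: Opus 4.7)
The plan is to analyse how the one-step transition from $\F^n_{i}(\Xb)$ to $\F^n_{i-1}(\Xb)$ in Algorithm \ref{algo2} can possibly modify the height of the fixed vertex $v$, and to split according to the value of $\X_i$. The condition $i \leq \birth_n(v)$ guarantees that $v$ already belongs to $\F^n_i(\Xb)$ (and hence to $\F^n_{i-1}(\Xb)$), so both heights are well defined. Conditioning on the $\sigma$-algebra generated by $\F^n_n,\dots,\F^n_i$ freezes the tree $T_v$ of $\F^n_i(\Xb)$ that contains $v$, and the only remaining source of randomness at step $i$ is the choice made by Algorithm \ref{algo2}.

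If $\X_i=-1$, the step simply adjoins a fresh one-vertex tree to $\F^n_i(\Xb)$, leaving every existing tree (in particular $T_v$) untouched. Hence $\H^n_{i-1}(v)=\H^n_i(v)$ deterministically, and both sides of the claimed identity equal $0$.

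If $\X_i=1$, Algorithm \ref{algo2} selects an ordered pair $(T_1,T_2)$ of distinct trees of $\F^n_i(\Xb)$ uniformly at random, independently of the past, and builds $\F^n_{i-1}(\Xb)$ by adding an edge between $r(T_1)$ and $r(T_2)$ and rooting the merged tree at $r(T_1)$. The key observation is that in the merged tree, vertices of $T_1$ keep the same distance to $r(T_1)$, while vertices of $T_2$ see their distance to the new root equal to their former distance to $r(T_2)$ plus exactly one. Consequently, $\H^n_{i-1}(v)-\H^n_i(v)=\mathbbm{1}_{\{v\in T_2\}}$. Since the number of trees in $\F^n_i(\Xb)$ is $S_i$ (as the operations increase or decrease the tree count by $1$ in exact correspondence with $\X_\cdot$, starting from $S_n$ trees), there are $S_i(S_i-1)$ admissible ordered pairs, of which exactly $S_i-1$ satisfy $T_2=T_v$. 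This yields the conditional probability $(S_i-1)/(S_i(S_i-1))=1/S_i$, as required.

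There is not really a difficult step: the lemma is essentially a direct reading of Algorithm \ref{algo2}. The only points that demand care are verifying that the merge rule indeed shifts heights by $+1$ only for vertices of $T_2$ (and not of $T_1$), and confirming that the number of trees in $\F^n_i(\Xb)$ is $S_i$, so that the counting in the last step is correct.
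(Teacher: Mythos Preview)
Your argument is correct. The paper does not actually prove this lemma here; it states it and refers to the companion paper \cite{BBKK23+} for the proof. Your direct reading of Algorithm~\ref{algo2}---splitting on $\X_i$, observing that only vertices of the tree playing the role of $T_2$ have their height incremented, and counting ordered pairs to get probability $(S_i-1)/(S_i(S_i-1))=1/S_i$---is exactly the natural proof and is presumably what appears in \cite{BBKK23+}.
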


 In particular, if $(Y^{n}_i)_{1\leq i \leq n}$ are independent Bernoulli random variables of parameter $1/S_i$, we have for every $u \in \V^{n}$
\begin{equation}
\label{eq:H0} \H^{n}_{0}(u) \quad \mathop{=}^{(d)} \quad \sum_{i=1}^{\birth_{n}(u)} Y^{n}_i\1_{\{\X_i=1\}}.
\end{equation}

\begin{lemma}
\label{lem:bunif}
Fix $1 \leq n \leq \tau(\Xb)$. Let $V$ be an element of $\V_{n}(\Xb)$ chosen uniformly at random. For every $1 \leq m \leq n$ we have
\[
\proba{\birth_n(V) < m}  = \frac{m+1-S_m(\Xb)}{n+1+S_n(\Xb)}.    
\]
\end{lemma}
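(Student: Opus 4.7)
The plan is a direct counting argument: both the set $\V_n(\Xb)$ and the subset of vertices with birth time strictly less than $m$ are deterministic (they depend only on $\Xb$, not on the random choices in Algorithm \ref{algo2}), so the probability reduces to a ratio of cardinalities.

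First I would compute $|\V_n(\Xb)|$. By definition $|\A_n(\Xb)| = S_n(\Xb)$, and since $S_n(\Xb) - 1$ equals the number of $+1$'s in $(\X_1, \ldots, \X_n)$ minus the number of $-1$'s, one finds $|\Fbb_n(\Xb)| = (n+1-S_n(\Xb))/2$. Summing, $|\V_n(\Xb)| = (n+1+S_n(\Xb))/2$, recovering the identity $N_n(\Xb) = (S_n(\Xb)+n+1)/2$ already noted in Section \ref{ssec:def}.

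Next, I would use the explicit form of the birth time given immediately after Algorithm \ref{algo2}: $\birth_n(u) = n$ when $u \in \A_n(\Xb)$ is active, and $\birth_n(u) = u-1$ when $u \in \Fbb_n(\Xb)$ is frozen (so that $u$ is an integer label). Hence $\{\birth_n(V) < m\}$ coincides with the event $\{V \in \Fbb_m(\Xb)\}$, namely that $V$ is a frozen vertex with label at most $m$. The same parity computation yields $|\Fbb_m(\Xb)| = (m+1-S_m(\Xb))/2$, and since $V$ is uniform on $\V_n(\Xb)$, dividing by $|\V_n(\Xb)|$ gives
\[
\P(\birth_n(V) < m) = \frac{(m+1-S_m(\Xb))/2}{(n+1+S_n(\Xb))/2} = \frac{m+1-S_m(\Xb)}{n+1+S_n(\Xb)}.
\]

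There is no substantial obstacle here: the lemma is essentially a bookkeeping identity for Algorithm \ref{algo2}, and rests only on the parity relation $|\V_k(\Xb)| = (k+1+S_k(\Xb))/2$ together with the deterministic description of $\birth_n$, both already available in the text.
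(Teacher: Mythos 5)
Your proof is correct. The paper itself defers the proof of this lemma to the companion paper \cite{BBKK23+}, but the argument can only be the same elementary counting: since $\V_n(\Xb)$ and the birth times $\birth_n(\cdot)$ are deterministic, the event $\{\birth_n(V)<m\}$ is exactly $\{V\in\Fbb_m(\Xb)\}$, and the stated formula is the ratio $|\Fbb_m(\Xb)|/|\V_n(\Xb)| = \frac{(m+1-S_m)/2}{(n+1+S_n)/2}$. Your bookkeeping (active vertices have $\birth_n = n \geq m$ so never contribute; frozen vertex $u$ has $\birth_n(u)=u-1<m$ iff $u\leq m$; the parity identity for the counts) is all verified.
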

 The last useful result identifies the law of the coalescence times between two vertices.
\begin{lemma}
\label{loi du temps de coalescence}
Fix $1 \leq n \leq \tau(\Xb)$ and consider $u,v \in \V_{n}(\Xb)$. Then for every $0 \leq c < \birth_{n}(u) \wedge \birth_{n}(v)$ such that $\X_{c+1}=1$:
\[\proba{\coal_{n}(u,v)=c}=\frac{1}{\binom{S_{c+1}}{2}} \prod_{\substack{i=c+2 \\ \text{s.t. } \X_i=1}}^{ \birth_{n}(u) \wedge \birth_{n}(v) } \left(1-\frac{1}{\binom{S_i}{2}}\right).\]
\end{lemma}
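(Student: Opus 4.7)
The plan is to unfold Algorithm \ref{algo2} from $i=\birth_n(u)\wedge\birth_n(v)$ down to $i=c$, multiplying the one-step conditional probabilities that arise from the Markov structure of the backwards forest construction.

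Set $m=\birth_n(u)\wedge\birth_n(v)$. I would first check that in $\F^n_m(\Xb)$ the vertices $u$ and $v$ lie in distinct trees. By symmetry, assume $\birth_n(u)=m$; inspecting Algorithm \ref{algo2}, $u$ enters the forest in $\F^n_m$ either as one of the initial singletons $a_k$ (when $m=n$) or as the fresh singleton labelled $u=m+1$ added during the transition $\F^n_{m+1}\to\F^n_m$ (when $m<n$, which forces $\X_{m+1}=-1$). In both cases the tree of $u$ in $\F^n_m$ is the singleton $\{u\}$, which is distinct from the tree of $v\neq u$.

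Now fix $i\in\{c+1,\ldots,m\}$ and condition on $\F^n_m,\ldots,\F^n_i$, on the event that $u$ and $v$ lie in distinct trees of $\F^n_i$. If $\X_i=-1$, then $\F^n_{i-1}$ is obtained by appending a fresh singleton, the trees of $u$ and $v$ are unchanged, and this event is preserved with conditional probability $1$. If $\X_i=1$, the pair of trees merged at step $i\to i-1$ is, by construction, uniform over the $\binom{S_i}{2}$ unordered pairs of trees in $\F^n_i$ and independent of the past. Therefore the conditional probability that this pair is precisely the pair of trees containing $u$ and $v$ equals $1/\binom{S_i}{2}$, while the complementary event (under which $u$ and $v$ remain in distinct trees of $\F^n_{i-1}$) has conditional probability $1-1/\binom{S_i}{2}$.

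It remains to write $\{\coal_n(u,v)=c\}$ as the intersection, for $i=m,m-1,\ldots,c+2$, of the events that the trees of $u$ and $v$ are not merged at step $i\to i-1$, together with the event that they are merged at step $c+1\to c$. The one-step probabilities above telescope via the tower property to give exactly the announced factorisation: the terminal merge contributes $1/\binom{S_{c+1}}{2}$ (since $\X_{c+1}=1$ by assumption), each intermediate step with $\X_i=-1$ contributes a trivial factor $1$, and each intermediate step with $\X_i=1$ contributes $1-1/\binom{S_i}{2}$. I do not anticipate any real obstacle; the only care needed is the verification of the base case, handled in the second paragraph.
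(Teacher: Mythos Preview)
Your argument is correct and is exactly the natural one: verify that $u$ and $v$ start in distinct trees at time $m=\birth_n(u)\wedge\birth_n(v)$, then use the Markov structure of Algorithm~\ref{algo2} and the fact that $\F^n_i$ has $S_i$ trees to compute the one-step conditional probabilities, and chain them via the tower property. The paper does not actually prove this lemma here; it defers all three lemmas of Section~\ref{ssec:laws of birth and coal} to Section~2.4 of the companion paper~\cite{BBKK23+}, so there is no in-paper proof to compare against, but your direct computation is precisely what one expects that reference to contain.
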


\section{Distances in uniform attachment trees with freezing}
\label{sec:distances}

We shall now study the geometry of  uniform attachment trees with freezing. 
 It will be convenient to work with $\T^n(\Xb)$ as built using Algorithm \ref{algo2}. Recall Theorem~\ref{thm:samelaw}: the only difference between $\T_{n}(\Xb)$  and $\T^{n}(\Xb)$ is that all the active vertices of $\T^{n}(\Xb)$ are labelled $a_{1}, \ldots, a_{S_{n}(\Xb)}$, while all the active vertices of $ \T_{n}(\Xb)$ are labelled $a$. In particular the graph structure of both trees is the same in law, so it is equivalent to establish our main results with $\T_{n}(\Xb)$ replaced with $\T^{n}(\Xb)$.

\begin{table}[htbp]\caption{{Table of the main notation and symbols introduced in Section \ref{sec:distances} and used later.}}
\centering
{\begin{tabular}{c c p{12cm} }
\toprule
$d^{n}$ &&  \, graph distance on the set of all vertices $\V_{n}(\Xb)$ of $\T^n(\Xb)$\\
$D^{n}$ &&
\begin{tabular}{l}
       modified distance on $\V_{n}(\Xb)$ defined for $u,v \in  \V_{n}(\Xb)$ by \\
$\displaystyle \dc^n(u,v) =  \frac{1}{2}\sum_{\coal_n(u,v) \leqslant  i  \leqslant \birth_n(u)}\frac{1}{S^n_i}+ \frac{1}{2}\sum_{\coal_n(u,v) \leqslant  i \leqslant \birth_{n}(v)}\frac{1}{S^n_i}$
    \end{tabular} \\
\bottomrule
\end{tabular}
}\label{tab:secdistances}
\end{table}

Also recall from \eqref{eq:labels} the definition of $\mathbb{V}_{n}(\Xb)$, which is a deterministic set representing the labels of the vertices of $\T^{n}(\Xb)$, and that by a slight abuse of notation we view  elements of $\mathbb{V}_{n}(\Xb)$ as  vertices of  $\T^{n}(\Xb)$.

As in \cite{BBKK23+}, we will use Bennett's inequality many times, under the {following variant} tailored for our purpose:
\begin{proposition}[Bennett's inequality]
\label{prop:bennett}
Let $(Y_{i})_{1 \leq i \leq n}$ be independent random variables, such that $Y_{i}$ follows the Bernoulli distribution of parameter $p_{i} \in [0,1]$.  Set $m_{n}=\sum_{i=1}^{n} p_{i}$. Assume $m_{n}>0$. Then for every $t>0$:
\[\proba{\sum_{i=1}^{n} Y_{i} >m_{n}+ t} \leq \exp \left( - m_{n} g \left( \frac{t}{m_{n}} \right) \right) \qquad \textrm{and} \qquad \proba{\sum_{i=1}^{n} Y_{i}< m_{n}- t} \leq \exp \left( - m_{n} g \left( \frac{t}{m_{n}} \right) \right),\]
 where $g(u)=(1+u)\ln(1+u)-u$ for $u>0$.
\end{proposition}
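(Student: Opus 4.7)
The plan is to apply the classical Chernoff--Cramer exponential method. The first step is to control the moment generating function: using independence and the inequality $1+x \leq e^x$, for any $\lambda \in \R$ one has
\[
\E\left[e^{\lambda \sum_i Y_i}\right] = \prod_{i=1}^n \bigl(1 + p_i(e^\lambda - 1)\bigr) \leq \exp\bigl(m_n(e^\lambda - 1)\bigr).
\]
Applying Markov's inequality to $e^{\lambda \sum_i Y_i}$ with $\lambda > 0$ then gives $\P(\sum_i Y_i > m_n + t) \leq \exp(m_n(e^\lambda - 1 - \lambda) - \lambda t)$. Optimizing in $\lambda$ (the minimum is attained at $\lambda = \ln(1+t/m_n)$) simplifies the exponent to exactly $-m_n g(t/m_n)$, which is the desired upper tail bound.

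For the lower tail, I would run the symmetric argument with $\lambda < 0$. Writing $\mu = -\lambda > 0$, the same MGF bound yields, after optimization at $\mu = -\ln(1-t/m_n)$, an estimate of the form $\exp(-m_n h_-(t/m_n))$ with $h_-(x) := (1-x)\ln(1-x) + x$. The optimizer is well-defined only when $t < m_n$, but the complementary case $t \geq m_n$ is trivial since $Y_i \geq 0$ forces $\P(\sum_i Y_i < m_n - t) = 0$.

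The one point that is not completely mechanical is that the stated proposition uses the \emph{same} function $g$ on both sides, so I need $h_-(x) \geq g(x)$ on $[0,1)$. I would close this by setting $\phi := h_- - g$ and observing that $\phi(0) = 0$ together with $\phi'(x) = -\ln(1-x^2) \geq 0$, which forces $\phi \geq 0$ and hence $\exp(-m_n h_-(t/m_n)) \leq \exp(-m_n g(t/m_n))$. This comparison is the only non-routine step, but it is a two-line calculation, so I do not anticipate any real obstacle in carrying out the proof.
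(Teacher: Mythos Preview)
Your proof is correct and is the standard Chernoff--Cram\'er derivation of Bennett's inequality. Note, however, that the paper does not actually prove Proposition~\ref{prop:bennett}: it is stated as a known concentration inequality (``recalled'' from the companion paper \cite{BBKK23+}) and used as a black box, so there is no argument in the paper to compare against. Your write-up, including the comparison $h_-(x)\geq g(x)$ via $\phi'(x)=-\ln(1-x^2)\geq 0$ to unify both tails under the same function $g$, is a clean self-contained justification of the result as stated.
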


\subsection{Deterministic estimates}
\label{ssec:estimates}

\begin{center}

\noindent\rule{0.8\textwidth}{1pt}

From now on, we consider a sequence $(\Xb^n)_{n \geq 1}$ of sequences of elements of $\{-1,1\}$ such that $\tau(\Xb^n)>n$ for every $n \geq 1$.

\noindent\rule{0.8\textwidth}{1pt}
\end{center}

To simplify notation, in the sequel, we drop the dependence in $\Xb^n$, so for instance we will write $\T^n$ for the tree $\T^n(\Xb^n)$ built from $\Xb^{n}$ using Algorithm \ref{algo2}. We also set $S_k^n \coloneqq S_k(\Xb^n)$ for all $k\leq n$, and define  for every $1\leq a \leq b < \tau(\Xb^{n})$:
\begin{equation}
\label{eq:defhn}
\h_{n}(a,b) \coloneqq \sum_{i=a}^b \frac{1}{S^{n}_i},  \quad \h_{n} \coloneqq \h_{n}(1,n), \qquad \h_{n}^{+}(a,b)  \coloneqq \sum_{i=a}^b \frac{1}{S^{n}_i} \mathbbm 1_{\{\X^{n}_i=1\}}, \quad  \h_{n}^{+} \coloneqq h_{n}^{+}(1,n).
\end{equation}

Our goal is now to study the geometry of $\T^n$.
To this end, it is useful to estimate $\h_{n}^{+}${, since it is  connected with the evolution of distances in the growth-coalescent construction (see Lemma \ref{lem:accroissementH}). The following result will allow to replace $\h_{n}^{+}$ with $\h_{n}$ up to a factor $2$ (this comes frome the fact that  in the regime under consideration there are roughly as much $+1$'s as $-1$'s and that their locations are roughly uniformly distributed), and in turn the asymptotic behavior of $\h_{n}$ can be related to the integral of $1/f$ by \ref{hyp:conv1surf}.}

\begin{lemma}
\label{lem:hnalpha}
Let $\alpha \in (0,1)$ and $f:[0,1] \rightarrow \R_+$. Assume that \ref{hyp:conv1surf} and \ref{hyp:conv1surf2} are in force. Then the following assertions hold:
\begin{enumerate}[noitemsep,nolistsep]
\item[(i)] For all $a \in (0,1]$, we have $S^n_{n a} = o(n)$.
\item[(ii)]  
The following convergences
\[
\frac{2}{n} \sum_{i=1}^n \1_{\X_i^n=1} \delta_{i/n} \mathop{\longrightarrow}\limits_{n\to  \infty}
\mathrm{Leb}_{[0,1]} \qquad \text{and} \qquad \frac{2}{n} \sum_{i=1}^n \1_{\X_i^n=-1} \delta_{i/n} \mathop{\longrightarrow}\limits_{n\to  \infty}
\mathrm{Leb}_{[0,1]} 
\]
hold weakly, where $\mathrm{Leb}_{[0,1]}$ is the Lebesgue measure on $[0,1]$.
\item[(iii)]  
\[
2n^{\alpha-1}\h_n^+ \sim n^{\alpha-1}\h_n \sim  \int_{0}^{1} \frac{1}{f(t)}\mathrm{d}t.
\]
\item[(iv)]
\[
\frac{1}{n^{1-\alpha}} \max_{1 \leq a \leq b \leq n} \left| \h^{+}_n(a,b)- \frac{1}{2} \h_n(a,b) \right|  \quad \mathop{\longrightarrow}_{n \rightarrow \infty} \quad 0.
\]
\item[(v)]
For all $\e>0$,
\[
\max_{\e n \le a \le b \le (1-\e)n}
\left\vert
\frac{1}{n} \sum_{i=a}^b \frac{n^{2\alpha}}{S^n_i(S^n_i-1)} \1_{\X^n_i=1}
-\int_{a/n}^{b/n} \frac{\mathrm{d}t}{2 f(t)^2}
\right\vert
\quad
\mathop{\longrightarrow}\limits_{n\to \infty}
0
\]
\end{enumerate}
\end{lemma}

\begin{proof}
For (i), let $a \in (0,1]$. We assume by contradiction that there exists $\e>0$ such that $S_{an}^n\ge \e n$ for infinitely many $n$. Then, since the steps are $\pm 1$, we have for infinitely many $n$, for all $k \le (\e \wedge a) n$, 
$$
S^n_{an-k} \ge S^n_{an}-k \ge \e n -k.
$$
Hence, by summing over $(\e \wedge a) n /2 \le k \le (\e \wedge a) n$, for infinitely many $n$ we have
$$
\sum_{(\e \wedge a) n /2 \le k \le (\e \wedge a) n} \frac{1}{S^n_{an-k}} = O(1),
$$
which is absurd by \ref{hyp:conv1surf}. Thus (i) holds.

For $0 \leq a < b \leq 1$,
\begin{equation}
\label{eq:pn} \#\{\lfloor an\rfloor+1\leq i \leq \lfloor bn \rfloor : \X_i=1\}  = \frac{\lfloor bn \rfloor - \lfloor an \rfloor + S^{n}_{\lfloor bn \rfloor} - S^{n}_{\lfloor an \rfloor}}{2}  \quad \mathop{\sim}_{n \rightarrow \infty} \quad  
\frac{(b-a)n}{2}
\end{equation}
since $S^{n}_{\lfloor an \rfloor}=o(n)$ and $S^{n}_{\lfloor bn \rfloor} =o(n)$, hence (ii) for the $+1$ steps by the Portmanteau theorem. The same reasoning works for the $-1$ steps.

Let us show (iii). The second equivalence is obvious from \ref{hyp:conv1surf}. So, we want to show that 
$
2n^{\alpha-1} \h_n^+ \mathop{\sim}\limits_{n\to \infty} n^{\alpha-1} \h_n$. 
But we have
$$
\h_n^+ + \sum_{i=1}^n \frac{1}{S_i^n} \1_{\X^n_i=-1} = \h_n.
$$
Therefore, it is enough to show that
$$
\h_n^+ \mathop{\sim}\limits_{n\to \infty} \sum_{i=1}^n \frac{1}{S_i^n}\1_{\X^n_i=-1} .
$$
\begin{figure}
	\centering
	\includegraphics[width=0.85\linewidth]{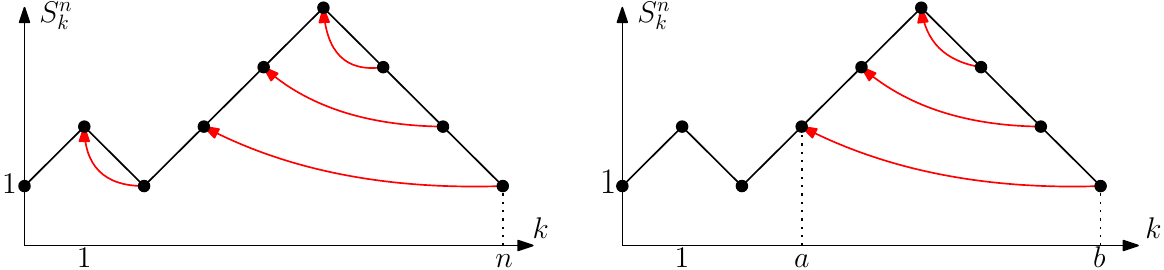}
	\caption{Illustration of the pairings which are used in the proof of the assertions (iii), on the left, and (iv),(v), on the right, of Lemma \ref{lem:hnalpha}.}
	\label{fig:identifications}
\end{figure}
However, by mapping each $-1$ step $\X_i^n=-1$ to the last time $j<i$ such that $S^n_{j-1}=S^n_i $, which is thus such that $\X^n_j = +1$ (see Figure \ref{fig:identifications}), we see that
$$
\h_n^+ = \sum_{i=1}^n \frac{1}{S^n_i +1} \1_{\X^n_i=-1}  + \sum_{s=2}^{S^n_n} \frac{1}{s} 
=  \sum_{i=1}^n \frac{1}{S^n_i } \1_{\X^n_i=-1} -  \sum_{i=1}^n \frac{1}{(S^n_i +1)S^n_i} \1_{\X^n_i=-1} + \ln(S^n_n) +O(1)
= \sum_{i=1}^n \frac{1}{S^n_i } \1_{\X^n_i=-1} + o(n^{1-\alpha}),
$$
where the last equality comes from the fact that $S^n_n \le n+1$ and from the convergence 
\begin{equation}\label{eq:lemme hnalpha convergence somme vers 0}
	\frac{1}{n}\sum_{i=1}^n \frac{n^\alpha}{(S^n_i +1)S^n_i}
	\mathop{\longrightarrow}\limits_{n \to \infty}
	0.
\end{equation}
The above convergence can be shown as follows: if $U$ is a uniform random variable in $[0,1]$, then the sequence $(n^\alpha/(S^n_{Un}(S^n_{Un}+1)))_{n\ge 1}$ is uniformly integrable since it is upper bounded by $(n^\alpha/S^n_{Un})_{n\ge 1}$, which converges in $\Lp^1$ by \ref{hyp:conv1surf}, and $n^\alpha/(S^n_{Un}(S^n_{Un}+1))$ converges in probability towards zero, again by \ref{hyp:conv1surf}. This proves (iii).

For (iv), let $1\le a \le b \le n$. We use the same idea: by mapping each $-1$ step $\X_i^n=-1$ for $a \le i \le b$ to the last time $j$ for $a \le j<i$ such that $S^n_{j-1}=S^n_i $ when it exists (see Figure \ref{fig:identifications}), one can see that
$$
\h_n^+(a,b) = \sum_{i=a}^b \frac{1}{S^n_i+1} \1_{\X^n_i = -1} - \sum_{s=\min_{a \le j \le b} S^n_j}^{S^n_a-1} \frac{1}{s+1} + \sum_{s=\min_{a \le j \le b} S^n_j+1}^{S^n_b} \frac{1}{s}, 
$$
so that for all $1 \le a \le b \le n$
$$
\left\vert
\h^+_n(a,b)- \sum_{i=a}^b \frac{1}{S^n_i} \1_{\X^n_i=-1}
\right\vert
\le 
\sum_{i=1}^n \frac{1}{S^n_i(S^n_i+1)} \1_{\X^n_i=-1}
+ 2 \sum_{s=1}^{n+1} \frac{1}{s}
= o(n^{1-\alpha}),
$$
where the $o(n^{1-\alpha})$ is uniform in $1\le a \le b \le n$ and comes from (\ref{eq:lemme hnalpha convergence somme vers 0}). But
$$
\h_n^+(a,b)-\frac{1}{2} \h_n(a,b) = \frac{1}{2} \left( 
\h_n^+(a,b) - \sum_{i=a}^b \frac{1}{S^n_i} \1_{\X^n_i=-1}
\right),
$$
hence (iv).

For the last point (v), let $\e \in (0,1)$. We use the same trick, so that for all $1 \le a \le b \le n$,{
\begin{equation}\label{eq:lemme h point cinq1}
\sum_{i=a}^b \frac{1}{S^n_i (S^n_i-1)} \1_{\X^n_i=1}
=
\sum_{i=a}^b \frac{1}{(S^n_i+1)S^n_i} \1_{\X^n_i = -1} +O\left( \frac{1}{\min_{\e n \le k \le (1-\e)n} S^n_k}\right),
\end{equation}
where the $O( 1/{\min_{\e n \le k \le (1-\e)n} S^n_k})$ is uniform in $\e n\le a \le b \le (1-\e)n$.}

{
Moreover, note that the condition \ref{hyp:conv1surf2} implies that
\begin{equation}\label{eq:cvminSinfini}
	\frac{1}{\min_{\e n \le k \le (1-\e)n} S^n_k }= o \left(n^{1-2\alpha}\right).
\end{equation}
Indeed, let $(k_n)_{n\ge 0}$ be a sequence of integers such that for all $n\ge 0$, we have $S^n_{k_n} = \min_{\e n \le k \le (1-\e)n} S^n_k$. Let $\eta_n>0$ such that $\eta_n \to 0$ and $S^n_{(1-\e)n}=o(n\eta_n)$ (such a sequence exists thanks to point (i)). Then we have as $n\to \infty$,
$$
\int_{k_n/n }^{k_n/n+\eta_n} \left(
\frac{n^\alpha}{S^n_{nt}}
\right)^2 dt  \ge \frac{1}{n} \sum_{j=1}^{\lfloor \eta_n n \rfloor -1} \frac{n^{2\alpha}}{(\min_{\e n \le k \le (1-\e)n} S^n_k +j)^2}
\ge (1+o(1)) \frac{n^{2\alpha-1}}{\min_{\e n \le k \le (1-\e)n} S^n_k+1}.
$$
By \ref{hyp:conv1surf2}, the integral on the left tends to zero, hence \eqref{eq:cvminSinfini}.
}

{
Besides, using \ref{hyp:conv1surf2} and the same idea as for \eqref{eq:lemme hnalpha convergence somme vers 0}, one can check that
\begin{equation}\label{eq:lemme h point cinq2}
\frac{1}{n}\sum_{i=a}^b \frac{n^{2\alpha}}{S^n_i (S^n_i-1)} \1_{\X^n_i=1} - \frac{1}{n}\sum_{i=a}^b \frac{n^{2\alpha}}{S^n_i (S^n_i+1)} \1_{\X^n_i=1} = o(1).
\end{equation}
} Thus, combining \eqref{eq:lemme h point cinq1}, \eqref{eq:cvminSinfini} and \eqref{eq:lemme h point cinq2}, 
\begin{align*}
\max_{\e n \le a \le b \le (1-\e)n}
&\left\vert
\frac{1}{n} \sum_{i=a}^b \frac{n^{2\alpha}}{S^n_i(S^n_i-1)} \1_{\X^n_i=1}
-\int_{a/n}^{b/n} \frac{\mathrm{d}t}{2 f(t)^2}
\right\vert
\\
&\le o(1) + 
\max_{\e n \le a \le b \le (1-\e)n}
\left\vert
\frac{1}{n} \sum_{i=a}^b \frac{n^{2\alpha}}{2S^n_i(S^n_i+1)} 
-\int_{a/n}^{b/n} \frac{\mathrm{d}t}{2 f(t)^2}
\right\vert
\quad
\mathop{\longrightarrow}\limits_{n\to \infty}
0,
\end{align*}
where the convergence is due to \ref{hyp:conv1surf2}.
\end{proof}
\subsection{An alternative distance using  coalescence times}
\label{ssec:newdistance}

{Recall that $d^{n}$ denotes the graph distance on the vertices $\V_{n}$ of the tree $\T_n$.}  In this section, we introduce a new distance on {$\V_{n}$}  which will be more convenient to study, based on the coalescence time {between vertices}.  For every $n\in \N$, and $u,v\ \in \mathcal{T}^{n}$, set
\[ \dc^n(u,v) \coloneqq  \frac{1}{2}\sum_{\coal_n(u,v) \leqslant  i  \leqslant \birth_n(u)}\frac{1}{S^n_i}+ \frac{1}{2}\sum_{\coal_n(u,v) \leqslant  i \leqslant \birth_{n}(v)}\frac{1}{S^n_i} = \frac{1}{2} \h_n(\coal_{n}(u,v),b_{n}(u))+ \frac{1}{2}\h_n(\coal_{n}(u,v),b_{n}(v)),\]
where we use the notation $\h$ introduced in \eqref{eq:defhn}.
\begin{proposition}
\label{height=ok}
Let $\alpha \in (0,1)$ and $f:[0,1] \rightarrow \R_+$. Assume that \ref{hyp:conv1surf} and \ref{hyp:conv1surf2} are in force. Then for every $\e>0$,
\[
\P \left (\max_{u,v\in \V_{n}} |d^n(u,v)-\dc^n(u,v)|>\e n^{1-\alpha} \right )  \quad \mathop{\longrightarrow}_{n \rightarrow \infty} \quad  0.
\]
\end{proposition}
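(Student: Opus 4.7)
The plan is to exploit the backward growth-coalescent construction (Algorithm~\ref{algo2}) to write $d^n(u,v)$ as the sum of two tree heights plus one, concentrate each height around its mean via Bennett's inequality, and then convert the resulting $\h^+_n$ expectations into $\tfrac{1}{2}\h_n$ by Lemma~\ref{lem:hnalpha}(iv). First, for $u\neq v$ in $\V_n$ with $c=\coal_n(u,v)$, Lemma~\ref{loi du temps de coalescence} forces $\X^n_{c+1}=1$, so the backward step $\F^n_{c+1}\to\F^n_c$ merges the tree $T_1$ containing $u$ with the tree $T_2$ containing $v$ by adding an edge between their roots, and later coalescences do not alter the $u$-to-$v$ geodesic. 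This yields the identity
\[
d^n(u,v) \;=\; \H^n_{c+1}(u) + \H^n_{c+1}(v) + 1.
\]

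Next I would establish concentration of $\H^n_a(u)$ uniformly in $(u,a)\in\V_n\times\{0,\dots,\birth_n(u)\}$. By iterating Lemma~\ref{lem:accroissementH}, for fixed $(u,a)$ the quantity $\H^n_a(u)=\sum_{i=a+1}^{\birth_n(u)}(\H^n_{i-1}(u)-\H^n_i(u))$ is a sum of independent Bernoulli variables of deterministic parameters $(1/S^n_i)\1_{\X^n_i=1}$, with mean $m_{u,a}\coloneqq\h^+_n(a+1,\birth_n(u))\leq\h^+_n=O(n^{1-\alpha})$ by Lemma~\ref{lem:hnalpha}(iii). Applying Bennett's inequality (Proposition~\ref{prop:bennett}) with deviation $t=\tfrac{\varepsilon}{4}n^{1-\alpha}$ delivers a tail of the form $\exp(-C_\varepsilon n^{1-\alpha})$ uniformly in $(u,a)$ (using $g(x)\gtrsim x^2$ when $t\leq m_{u,a}$, and a Chernoff-type lower bound when $t>m_{u,a}$). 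Since there are only $O(n^2)$ such pairs and $\alpha<1$, a union bound yields
\[
\P\!\left(\max_{u\in\V_n,\; 0\leq a\leq \birth_n(u)} \bigl|\H^n_a(u)-\h^+_n(a+1,\birth_n(u))\bigr| > \tfrac{\varepsilon}{4}n^{1-\alpha}\right) \longrightarrow 0.
\]

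Finally I would compare $\h^+_n$ with $\tfrac{1}{2}\h_n$ via Lemma~\ref{lem:hnalpha}(iv), which gives $\max_{a\leq b}|\h^+_n(a,b)-\tfrac{1}{2}\h_n(a,b)|=o(n^{1-\alpha})$; shifting the summation window from $[c+2,\birth_n(u)]$ to $[c,\birth_n(u)]$ contributes at most $1/S^n_c+1/S^n_{c+1}\leq 2=o(n^{1-\alpha})$, since $S^n_i\geq 1$ throughout because $\tau(\Xb^n)>n$. The triangle inequality then bounds $|d^n(u,v)-\dc^n(u,v)|$ by the sum of these three errors, applied to both $u$ and $v$, giving $\max_{u,v\in\V_n}|d^n(u,v)-\dc^n(u,v)|\leq \varepsilon n^{1-\alpha}/2 + o(n^{1-\alpha})$ with probability tending to $1$, and the claim follows upon letting $\varepsilon\downarrow 0$. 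The main obstacle is the randomness of $c=\coal_n(u,v)$: one cannot apply Bennett at a single fixed truncation index but must instead control all $(u,a)$ simultaneously, and the resulting $O(n^2)$ union-bound factor is precisely absorbed by the Bennett tail $\exp(-Cn^{1-\alpha})$ when $\alpha<1$.
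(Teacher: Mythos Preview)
Your proposal is correct and follows essentially the same route as the paper: express $d^n(u,v)$ via heights at the coalescence time, concentrate $\H^n_a(u)$ around $\h^+_n(a{+}1,\birth_n(u))$ using Bennett's inequality with an $O(n^2)$ union bound, and finish by converting $\h^+_n$ to $\tfrac12\h_n$ through Lemma~\ref{lem:hnalpha}(iv). The only cosmetic differences are that the paper writes $d^n(u,v)=\H^n_{c}(u)+\H^n_c(v)$ (equivalent to your $\H^n_{c+1}(u)+\H^n_{c+1}(v)+1$) and obtains the uniform Bennett tail in one stroke via the monotonicity of $x\mapsto xg(t/x)$ rather than your case split $t\lessgtr m_{u,a}$.
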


\begin{proof}
Recall  that, for $v \in \mathbb{V}_{n}$ and $0 \leq i \leq \birth_{n}(v)$,  $\H^{n}_{i}(v)$ is the height of vertex $v$ in $\mathcal{F}^{n}_{i}$. 
First note that by the definition of Algorithm \ref{algo2},  for every $u,v\in \V_{n}$, the nearest common ancestor of $u$ and $v$ in $\T^{n}$ is the root of the tree in the forest $\mathcal{F}^{n}_{\coal_{n}(u,v)}$ which contains $u$ and $v$. As a result, $d^{n}(u,v)=\H^{n}_{\coal_{n}(u,v)}(u)+\H^{n}_{\coal_{n}(u,v)}(v)$.
So
\[ |d^{n}(u,v)-\dc^{n}(u,v)|= \left  | \H^{n}_{\coal_{n}(u,v)}(u)+\H^{n}_{\coal_{n}(u,v)}(v)- \frac{1}{2} \h_n(\coal_{n}(u,v),b_{n}(u))- \frac{1}{2} \h_n(\coal_{n}(u,v),b_{n}(v)) \right |. \]
Hence
\begin{equation}
\label{4/9/10h}
\max_{u,v\in \V_{n}} |d^{n}(u,v)-\dc^{n}(u,v)| \leq 2\max_{\substack{v\in \V_{n} \\ 0 \leq a<\birth_{n}(v)}} \left  |\H^{n}_{a}(v)- \frac{1}{2}  \h_{n}(a,b_{n}(v))\right |. 
\end{equation}

We next estimate the right hand side of \eqref{4/9/10h}. It follows from Lemma \ref{lem:accroissementH} that for every $v\in \V_{n}$ and $ 0 \leq a \leq \birth_{n}(v)$, 
\[ \H^{n}_a(v) \quad \mathop{=}^{(d)} \quad \sum_{i=a+1} ^{\birth_{n}(v)}  Y_{i}^{n} \1_{\X^{n}_i=1},\]
where $(Y_i^n)_{1 \leq i \leq n}$  are independent Bernoulli random variables of {respective} parameters $(1/S^n_i)_{1 \leq i \leq n}$.
As before, set $g(u)=(1+u)\log(1+u)-u$ for $u>0$. Then, by Bennett's inequality, recalled in Proposition \ref{prop:bennett}, and using the fact that $x \mapsto x g(t/x)$ is decreasing, for every $t>0$, we see that $\proba{ \left|  \H^{n}_a(v) -  \h^{+}_{n}(a+1,b_{n}(v)) \right|>t}$ {is smaller than} $2\exp(-h^{+}_n g(t/h^{+}_n))$.
Therefore, by a union bound,
\begin{equation}
\proba{\max_{{v\in \V_{n}, 0 \leq a<\birth_{n}(v)}} \left  |\H^{n}_{a}(v)-   \h^{+}_{n}(a+1,b_{n}(v))\right |>t}\leq 2n^2\exp(-h^{+}_n g(t/h^{+}_n)).
\label{4/9/14h}
\end{equation}
By Lemma \ref{lem:hnalpha} (iii)  it follows that  for every $\e>0$,
\[ \P\left (\max_{v\in \V_{n}, 0 \leq a<\birth_{n}(v)} \left  |\H^{n}_{a}(v)- \h^{+}_{n}(a+1,b_{n}(v)) \right |> \e n^{1-\alpha} \right )\leq 2n^2\exp(-h^{+}_n g(\e n^{1-\alpha}/h^{+}_n))\to 0.\] 
{In addition}, by Lemma \ref{lem:hnalpha} (iv), and since $ {1}/{S_{i}} \leq 1$ for every $0 \leq i \leq n$, we have
\[
\frac{1}{n^{1-\alpha}} \max_{1 \leq a \leq b \leq n} \left| \h^{+}_n(a+1,b)- \frac{1}{2} \h_n(a,b) \right|  \quad \mathop{\longrightarrow}_{n \rightarrow \infty} \quad 0,
\]
which completes the proof.
\end{proof}

\section{Scaling limits: non-critical regimes}
\label{sec:noncritical}

As before, we consider a sequence $(\Xb^n)_{n \geq 1}$ of sequences of elements of $\{-1,1\}$ such that $\tau(\Xb^n)>n$ for every $n \geq 1$.

\subsection{Subcritical  regime \texorpdfstring{$\alpha \in (0,1/2)$}{}}

We first show that, roughly speaking, vertices coalesce quickly.

\begin{lemma}\label{lemme ça coalesce vite}
Let $\alpha \in (0,1/2)$. Let $\alpha \in (0,1)$ and $f:[0,1] \rightarrow \R_+$. Assume that \ref{hyp:conv1surf} and \ref{hyp:conv1surf2} are in force. Then, 
\[
\frac{1}{n}\max_{u,v \in \V_{n}}\left| \birth_{n}(u) \wedge \birth_n(v)-\coal_{n}(u,v)\right| \cvproba[n] 0.
\]
\end{lemma}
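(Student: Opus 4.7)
My plan is to apply a union bound over pairs $(u,v) \in \V_n^2$, combined with the explicit law of $\coal_n$ coming from Algorithm \ref{algo2}, to get an exponential tail bound on slow coalescence that beats the quadratic number of pairs. The key input is that when $\alpha < 1/2$, the rate $n^{1-2\alpha}$ of the exponential decay naturally appearing in the bound grows polynomially, hence dominates any logarithm.

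Fix $\e>0$ and consider a pair $(u,v)$, setting $b \coloneqq \birth_n(u) \wedge \birth_n(v)$. Since $0 \leq \coal_n(u,v) \leq b$, the event $\{b - \coal_n(u,v) > \e n\}$ is empty unless $b \geq \e n$, so I restrict to this case. By definition $\{\coal_n(u,v) < b - \e n\}$ is exactly the event that $u$ and $v$ lie in distinct trees of $\F^n_{b - \e n}$. In the backward evolution from $\F^n_b$ to $\F^n_{b - \e n}$ in Algorithm \ref{algo2}, at each step $i$ with $X^n_i = 1$ a uniform pair among the $S^n_i$ trees is merged, and (summing Lemma \ref{loi du temps de coalescence}, or reasoning directly)
\[
\P(\coal_n(u,v) < b - \e n) = \prod_{\substack{i = b - \e n + 1 \\ X^n_i = 1}}^{b} \left( 1 - \frac{1}{\binom{S^n_i}{2}}\right) \leq \exp\left( -2 \sum_{\substack{i = b - \e n + 1 \\ X^n_i = 1}}^{b} \frac{1}{(S^n_i)^2}\right),
\]
where I used $1 - x \leq e^{-x}$ and $\binom{k}{2} \leq k^2/2$.

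To lower bound the exponent uniformly in $b \geq \e n$, I would use two ingredients from the excerpt. First, since $f$ is continuous (hence bounded) on $[0,1]$ by \ref{hyp:f}, \ref{hyp:convf} yields a constant $C > 0$ such that $S^n_i \leq C n^\alpha$ for all $i \leq n$ and all $n$ large. Second, by \eqref{eq:pn}, the number of $i \in [b - \e n + 1, b]$ with $X^n_i = 1$ equals $(\e n + S^n_b - S^n_{b - \e n})/2$; as $|S^n_j| = O(n^\alpha) = o(n)$ uniformly, this count is at least $\e n / 3$ for $n$ large, uniformly in $b$. Combining these estimates gives, for some $c_\e > 0$,
\[
\P(\coal_n(u,v) < b - \e n) \leq \exp(-c_\e n^{1-2\alpha}).
\]

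Finally, a union bound over the at most $|\V_n|^2 \leq n^2$ pairs yields
\[
\P\left( \max_{u,v \in \V_n} \bigl( \birth_n(u) \wedge \birth_n(v) - \coal_n(u,v)\bigr) > \e n \right) \leq n^2 \exp(- c_\e n^{1 - 2\alpha}) \cv 0,
\]
which is the claim. The place where $\alpha < 1/2$ is essential is precisely this last step: it makes $n^{1-2\alpha}$ grow polynomially, so the exponential decay beats the $n^2$ prefactor. I do not expect any serious obstacle; the only subtlety is the uniformity in $b$ of the count of $+1$'s in the backward window, which is handled by the uniform $o(n)$ control on $S^n$ available from \ref{hyp:convf}.
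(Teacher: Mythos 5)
Your proof is correct and takes essentially the same route as the paper: a union bound over pairs, the explicit product formula for the coalescence time, the uniform bound $S^n_i = O(n^\alpha)$ from \ref{hyp:convf}, the count of $+1$'s being at least $\e n/3$ in any window of length $\e n$, and the observation that $\alpha < 1/2$ makes $n^{1-2\alpha}$ grow polynomially so the decay beats the quadratic number of pairs. The only cosmetic difference is that you pass through $1-x \leq e^{-x}$ to rewrite the bound as $\exp(-c_\e n^{1-2\alpha})$, whereas the paper keeps it in the raw product form $\left(1 - 2/\left(n^{2\alpha}(1 + \sup_{[0,1]} f^2)\right)\right)^{\e n/3}$ and directly observes that it tends to zero.
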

\begin{proof}
{Recall that the total number of vertices of $ \mathcal{T}^{n}$ is $(n+S_n^n+1)/2$.} {Using Lemma \ref{loi du temps de coalescence}}, by a union bound and then by \ref{hyp:conv1surf2}, for $n$ large enough
\begin{align*}
\P\left( \exists u, v \in \V_{n}: \left| \birth_n(u) \wedge \birth_{n}(v)-\coal_n(u,v)\right| \ge \e n\right) &\le \left(\frac{n+S_n^n+1}{2}\right)^2
\max_{\lfloor\e n \rfloor+1\le b \le n}\prod_{\substack{i=b-\lfloor \e n \rfloor+1 \\ \text{s.t. } \X^n_i=1}}^b \left( 1-\frac{1}{\binom{S_i^n}{2}}\right) \\
&\le(n+1)^2 \exp\left(\max_{\lfloor \e n \rfloor +1 \le b \le n} \sum_{i=b-\lfloor \e n \rfloor+1}^b \frac{-2}{S^n_i(S^n_{i}-1)}\1_{\X^n_i=1} \right) \\
&\le (n+1)^2
e^{-(n^{1-2\alpha}/2)\min{\e \le y \le 1} \int_{y-\e/2}^y {\mathrm{d} t}/{f(t)^2} }
\cv[n] 0,
\end{align*}
where the last inequality comes from Lemma \ref{lem:hnalpha} (v).
This completes the proof.
\end{proof}

\begin{proof}[Proof of Theorem \ref{thm:sous-critique}]
Let $k\ge 1$. For all $n\ge 1$, let $V^n_1,\ldots,V^n_k$ be uniform random vertices of $\T^n$. Let $U_1,\ldots, U_k$ be i.i.d. uniform random variables in $[0,1]$. Let us first show that 
\[
\left(\frac{d^n(V^n_j,V^n_\ell)}{n^{1-\alpha}}\right)_{1\le j,\ell\le k} \cvloi[n] \left(\int_{U_{j}\wedge U_\ell}^{U_j \vee U_\ell} \frac{\mathrm{d}t}{2f(t)}\right)_{1\le j,\ell\le k}.
\]
By Proposition \ref{height=ok}, it suffices to prove that
\begin{equation}\label{eq thm sous-critique}
\left(\frac{1}{n^{1-\alpha}}\left(\sum_{i=\coal_n(V^n_j,V^n_\ell)}^{\birth_n(V^n_j)} \frac{1}{2S^n_i} 
+\sum_{i=\coal_n(V^n_j,V^n_\ell)}^{\birth_n(V^n_\ell)} \frac{1}{2S^n_i}
\right)\right)_{1\le j,\ell\le k} \cvloi[n] \left(\int_{U_{j}\wedge U_\ell}^{U_j \vee U_\ell} \frac{\mathrm{d}t}{2f(t)}\right)_{1\le j,\ell\le k}.
\end{equation}
However, one can write
\[
\sum_{i=\coal_n(V^n_j,V^n_\ell)}^{\birth_n(V^n_j)} \frac{1}{2S^n_i} 
+\sum_{i=\coal_n(V^n_j,V^n_\ell)}^{\birth_n(V^n_\ell)} \frac{1}{2S^n_i}
=\sum_{i=\coal_n(V^n_j,V^n_\ell)}^{\birth_n(V^n_j)\wedge \birth_n(V^n_\ell)} \frac{1}{S^n_i}
+\sum_{(\birth_n(V^n_j)\wedge \birth_n(V^n_\ell))+1}^{\birth_n(V^n_j)\vee\birth_n(V^n_\ell)} \frac{1}{2S^n_i}.
\]
Besides, $S^n_n=o(n)$, so that with high probability the vertices $V^n_j$'s are frozen. Moreover, conditionally on being frozen, the $\birth_n(V^n_j)$'s are independent and uniform in $\left\{i \in [n]; \X_i^n=-1\right\}$. Therefore, by Lemma \ref{lem:hnalpha} (ii), we know that the $\birth_n(V^n_j)/n$'s converge in distribution towards the $U_j$'s. Finally, combining this remark with Lemma \ref{lemme ça coalesce vite} and \ref{hyp:conv1surf}, we obtain (\ref{eq thm sous-critique}).

So as to end the proof of the GHP convergence, by {Lemma \ref{GP+GH=GHP}}, it suffices to establish the convergence in the sense Gromov-Hausdorff since we already have the Gromov-Prokhorov convergence. Let us define the correspondence
\[
\mathcal{C}_n=\left\{\left(u, \int_0^t \frac{1}{2f(s)} \mathrm{d}s\right); \ u \in \V_n \text{ and }\left|t-\frac{\birth_n(u)}{n}\right| = \min_{v\in \V_n} \left|t-\frac{\birth_n(v)}{n}\right|\right\}.
\]
By Theorem 7.3.25 in \cite{BBI01} it then suffices to show that
\begin{equation}\label{cv distorsion}
\mathrm{dis} (\mathcal{C}_n) \coloneqq \sup_{(u, \int_0^t \frac{1}{2f(s)} \mathrm{d}s), (v, \int_0^r \frac{1}{2f(s)} \mathrm{d}s) \in \mathcal{C}_n}
\left|
\frac{d^n(u,v)}{n^{1-\alpha}} - \int_{t\wedge r}^{t\vee r}\frac{1}{2f(s)} \mathrm{d}s
\right|
\cvproba[n] 0.
\end{equation}
We upper bound
\begin{align}
\mathrm{dis}(\mathcal{C}_n)
\le &\sup_{u, v \in \V_n}\frac{ |d^n(u,v)-\dc^n(u,v)|}{n^{1-\alpha}} \label{première ligne distorsion} \\
&+ \sup_{u,v \in \V_n}\left|
\frac{1}{n^{1-\alpha}} \sum_{i=\coal_n(u,v)}^{\birth_n(u)\wedge \birth_n(v)} \frac{1}{S_i^n}
\right| \label{deuxième ligne distorsion}\\
&+\sup_{(u, \int_0^t \frac{1}{2f(s)} \mathrm{d}s), (v, \int_0^r \frac{1}{2f(s)} \mathrm{d}s) \in \mathcal{C}_n}
\left|
\frac{1}{n^{1-\alpha}}
\sum_{i=\birth_n(u)\wedge \birth_n(v) +1}^{\birth_n(u) \vee \birth_n (v)} \frac{1}{2S^n_i} - \int_{t\wedge r}^{t\vee r} \frac{1}{2f(s)} \mathrm{d}s
\right| \label{troisième ligne distorsion}
\end{align}

The first line (\ref{première ligne distorsion}) converges to zero in probability as $n\to \infty$ thanks to Proposition \ref{height=ok}. For the second line (\ref{deuxième ligne distorsion}),
\[
\sup_{u,v \in \V_n}\left|
\frac{1}{n^{1-\alpha}} \sum_{i=\coal_n(u,v)}^{\birth_n(u)\wedge \birth_n(v)} \frac{1}{S_i^n}
\right| 
\le 
\int_0^1 \left\vert
\frac{n^\alpha }{S^n_{\lfloor ns \rfloor}} -\frac{1}{f(s)} 
\right\vert
\mathrm{d} s
+
\sup_{u, v \in \V_n} \int_{\coal_n(u,v)/n}^{(\birth_n(u)\wedge \birth_n(v)+1)/n} \frac{\mathrm{d}s}{f(s)}
\cvproba[n] 0
,
\]
by \ref{hyp:conv1surf} and by Lemma \ref{lemme ça coalesce vite}. Thus the second line (\ref{deuxième ligne distorsion}) converges to zero in probability as $n\to \infty$. 

We can upper-bound the quantity appearing in (\ref{troisième ligne distorsion}) by
$$
\int_0^1 \left|
\frac{n^\alpha}{2S^n_{\lfloor ns \rfloor}} - \frac{1}{2f(s)} 
\right| \mathrm{d}s +
 \sup_{(u, \int_0^t \frac{1}{2f(s)} \mathrm{d}s), (v, \int_0^r \frac{1}{2f(s)} \mathrm{d}s) \in \mathcal{C}_n}
 \left| \int_{ \frac{\birth_n(u)\wedge \birth_n(v) +1}{n}}^{ \frac{\birth_n(u) \vee \birth_n (v)+1}{n}} \frac{\mathrm{d}s}{2 f(s)}- \int_{t\wedge r}^{t\vee r} \frac{\mathrm{d}s}{2f(s)} 
\right|.
$$
Observe that, by definition of $\mathcal{C}_n$ and thanks to Lemma \ref{lem:hnalpha} (ii), we have
\[
\sup_{(u, \int_0^t \frac{1}{2f(s)} ds), (v, \int_0^r \frac{1}{2f(s)} ds) \in \mathcal{C}_n} \left( \left| t \wedge r- \frac{\birth_n(u)\wedge \birth_n(v)}{n} \right| +  \left| t \vee r- \frac{\birth_n(u)\vee \birth_n(v)}{n} \right| \right)
\cvproba[n] 0,
\]
and combined with the integrability of $1/f$ this completes the proof.
\end{proof}

We complete the study of the subcritical regime by establishing a scaling limit result when the sequence  $\| S^{n} \|_{\infty}$ is bounded (which roughly speaking may be thought of {as} the case $\alpha=0$). The main difference with Theorem \ref{thm:sous-critique} is that now the limit is for the Gromov--Hausdorff topology, and not in the Gromov--Hausdorff--Prokhorov sense (in general there is no universal behavior for the latter topology).

\begin{theorem}
\label{thm:alpha0}
Let $(\Xb^n)_n$ be such that $\tau(\Xb^n)>n$ for all $n$ and $S_k^n \leq M$ for all $k\leq n$ where $M$ is some positive constant. Then
\[
\frac{1}{\h_n^+} \cdot \T_{n}  \quad \mathop{\longrightarrow}^{\P}_{n \rightarrow \infty} \quad \left[0,1\right]
\]
holds in probability for the Gromov-Hausdorff topology.
\end{theorem}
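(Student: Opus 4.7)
The plan is to adapt the proof of Theorem~\ref{thm:sous-critique}, but working directly with $\h_n^+$-based quantities since the conclusions of Lemma~\ref{lem:hnalpha} are no longer available. The crucial preliminary observation is that under $S_i^n \leq M$ and $\tau(\Xb^n) > n$, we have $S^n_i \in \{1, \ldots, M\}$, and the number of $+1$'s in $\X_1^n, \ldots, \X_n^n$ equals $(n + S_n^n - 1)/2 \geq (n - M + 1)/2$. Therefore $\h_n^+ \geq (n - M + 1)/(2M)$, so $\h_n^+$ grows at least linearly in $n$, which will make the Bennett estimates very effective.

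First, I would introduce the $\h_n^+$-based analogue $\dc^{n,+}(u,v) := \h_n^+(\coal_n(u,v)+1, \birth_n(u)) + \h_n^+(\coal_n(u,v)+1, \birth_n(v))$ of $\dc^n$, and repeat the Bennett-based argument of Proposition~\ref{height=ok} directly at the level of $\H^n_a(v)$ versus $\h_n^+(a+1, \birth_n(v))$, thereby avoiding the appeal to Lemma~\ref{lem:hnalpha}~(iv). A union bound then yields $\P(\max_{u,v \in \V_n} |d^n(u,v) - \dc^{n,+}(u,v)| > \e\, \h_n^+) \leq 2 n^2 \exp(- \h_n^+ g(\e))$, which tends to $0$ thanks to $\h_n^+ \geq cn$.

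Next, from Lemma~\ref{loi du temps de coalescence} and the uniform lower bound $1/\binom{S_i^n}{2} \geq 2/(M(M-1))$, a union bound over the at most $n^2$ pairs yields $\max_{u,v \in \V_n}(\birth_n(u) \wedge \birth_n(v) - \coal_n(u,v)) = O(\log n)$ with probability tending to $1$. Since $1/S_i^n \leq 1$, this also forces $\max_{u,v \in \V_n} \h_n^+(\coal_n(u,v)+1, \birth_n(u) \wedge \birth_n(v)) = O(\log n) = o(\h_n^+)$. Combined with the decomposition $\dc^{n,+}(u,v) = 2\h_n^+(\coal_n(u,v)+1, \birth_n(u) \wedge \birth_n(v)) + |\h_n^+(1, \birth_n(u)) - \h_n^+(1, \birth_n(v))|$, this gives $\max_{u,v \in \V_n} |d^n(u,v) - |\h_n^+(1, \birth_n(u)) - \h_n^+(1, \birth_n(v))|| = o(\h_n^+)$ in probability.

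Finally, parametrizing vertices by $t_n(u) := \h_n^+(1, \birth_n(u))/\h_n^+ \in [0,1]$, I would consider the correspondence $\mathcal{C}_n = \{(u,t) \in \V_n \times [0,1] : |t - t_n(u)| = \min_{v \in \V_n} |t - t_n(v)|\}$. Because the walk $S$ cannot have more than $M-1$ consecutive $+1$ steps while staying bounded by $M$, consecutive values of $\birth_n$ differ by at most $M$, so the set $\{t_n(u) : u \in \V_n\}$ is $(M/\h_n^+)$-dense in $[0,1]$. Hence the distortion of $\mathcal{C}_n$ is controlled by the uniform approximation from the previous paragraph (divided by $\h_n^+$) plus an $O(1/\h_n^+)$ term, both $o(1)$ in probability, which yields Gromov--Hausdorff convergence to $[0,1]$. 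The main obstacle is the uniform (over all pairs) reduction from $d^n$ to $|\h_n^+(1, \birth_n(\cdot)) - \h_n^+(1, \birth_n(\cdot))|$; however, both the Bennett concentration and the fast-coalescence bound are clean thanks to the strong hypothesis $S^n_i \leq M$.
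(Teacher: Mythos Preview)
Your proof is correct and takes a genuinely different route from the paper. The paper argues geometrically: it invokes the companion paper's height result $\Height(\T_n)/\h_n^+ \to 1$ to get a longest branch of the right length, and then shows by an ad hoc freezing argument that any \emph{side sub-tree} hanging off this branch has height $o(n)$ (in any interval with at least $M$ freezing steps there is a uniformly positive chance that the side sub-tree loses all its active vertices, and one chains this over $\Theta(n)$ such intervals). Your approach instead re-runs the machinery of Theorem~\ref{thm:sous-critique}: Bennett concentration of $\H^n_a(v)$ around $\h_n^+(a+1,\birth_n(v))$ (now trivially effective because $\h_n^+ \geq c n$), a uniform $O(\log n)$ bound on $\birth_n(u)\wedge \birth_n(v)-\coal_n(u,v)$ from Lemma~\ref{loi du temps de coalescence} and the bound $1/\binom{S_i^n}{2}\geq 1/\binom{M}{2}$, and an explicit correspondence to $[0,1]$ via $u\mapsto \h_n^+(1,\birth_n(u))/\h_n^+$. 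The paper's proof is shorter once the height result is granted and gives a clean geometric picture; your argument is self-contained (it does not appeal to the companion paper, and in fact recovers $\Height(\T_n)/\h_n^+\to 1$ as a by-product), is uniform with the rest of Section~\ref{sec:noncritical}, and yields the slightly sharper quantitative statement $\max_{u,v}\bigl|d^n(u,v)-|\h_n^+(1,\birth_n(u))-\h_n^+(1,\birth_n(v))|\bigr|=o(\h_n^+)$ in probability.
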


\begin{proof}
For all $n$, let us call a \emph{side sub-tree} of $\T_n$, a sub-tree rooted at a vertex of the longest branch of $\T_n$ and containing only vertices that are not on the longest branch (except the root). The height of the longest branch of $\T_n$ is of order $\h_n^+$. Indeed, it is easy to check that $\h_n^+ = \Theta(n)$. Therefore, by \cite[Theorem 3 (3)]{BBKK23+} we deduce that $\Height(\T_n)/\h_n^+$ converges to $1$ in $\Lp^p$ for all $p\geq 1$. To show the proposition, it is enough to show that, for all $\e>0$, the probability to see a side sub-tree of height larger than $\e n$ in $\T_n$ goes to $0$ when $n$ goes to infinity.

{Since for $\e$ small enough, the longest branch has a length at least $\e n$ with high probability, it suffices to show that the probability that ``there exist two vertices $u,v \in \V_n$ such that their nearest common ancestor $w$ is at distance at least $\e n$ from each of them'' goes to zero as $n\to \infty$. But for all $u,v \in \V_n$, if $w$ is their common ancestor, then one can directly upperbound
$$
d^n(u,w)\le  \birth_n(u)-\coal_n(u,v) \qquad \text{and} \qquad
d^n(v,w) \le \birth_n(v)-\coal_n(u,v).
$$
Finally, by a union bound on $u,v \in \V_n$ and using Lemma \ref{loi du temps de coalescence},
\begin{align*}
\P\left( \exists u, v \in \V_n, \ 
\birth_n(u)\wedge \birth_n(v) -\coal_n(u,v) \ge \e n
\right)
&\le
\left(\frac{n+S_n^n+1}{2}\right)^2
\max_{\lfloor\e n \rfloor+1\le b \le n}\prod_{\substack{i=b-\lfloor \e n \rfloor+1 \\ \text{s.t. } \X^n_i=1}}^b \left( 1-\frac{1}{\binom{S_i^n}{2}}\right) \\
&\le (n+1)^2 \left( 1-\frac{1}{\binom{M}{2}}\right)^{\lfloor \e n\rfloor/2 - M}
\mathop{\longrightarrow}\limits_{n\to \infty} 0,
\end{align*}
where the last inequality stems from the fact that $S^n_k\le M$ for all $k\le n$ and from \eqref{eq:pn} which also holds in this case.
}
\end{proof}

\subsection{Supercritical regime \texorpdfstring{$\alpha \in (1/2,1)$}{}}

We first show that, roughly speaking, vertices coalesce near the origin.

\begin{lemma}
\label{lemme ça coalesce en zéro}
Let $\alpha \in (1/2,1)$ and $f:[0,1] \rightarrow \R_+$. Assume that \ref{hyp:conv1surf} and \ref{hyp:conv1surf2} are in force. For all $n \ge 1$, let $V^n_1, V_2^n$ be two independent uniform vertices of $\V_{n}$. Then
\[
\frac{\coal_{n}(V^{n}_{1},V^{n}_{2})}{n}\cvproba[n] 0.
\]
\end{lemma}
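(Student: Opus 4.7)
The plan is, given $\e>0$, to choose an auxiliary $\eta \in (0,\e/2)$ and split
\[\P\left(\coal_n(V_1^n, V_2^n) \geq \e n\right) \leq \P\left(\coal_n(V_1^n, V_2^n) \in [\e n, (1-\eta) n]\right) + \P\left(\coal_n(V_1^n, V_2^n) > (1-\eta) n\right),\]
show that the ``bulk'' term vanishes and the ``tail'' term is asymptotically at most $\eta^2$, and finally send $\eta \to 0$. The $O(1/n)$ probability that $V_1^n = V_2^n$ is absorbed in lower-order errors.

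For the bulk, I would use Lemma \ref{loi du temps de coalescence}: for $c$ with $\X_{c+1}^n=1$ and distinct $u,v \in \V_n$ satisfying $c < \birth_n(u) \wedge \birth_n(v)$, this yields $\P(\coal_n(u,v)=c) \leq 1/\binom{S_{c+1}^n}{2}$. Averaging over the independent uniform $V_1^n,V_2^n$, and using that $\birth_n(V_1^n)$ and $\birth_n(V_2^n)$ are i.i.d.\ (since the labels and birth times in $\V_n$ are deterministic), we get
\[\P\left(\coal_n(V_1^n, V_2^n) = c\right) \leq \frac{\1_{\X_{c+1}^n = 1}}{\binom{S_{c+1}^n}{2}}\cdot \P(\birth_n(V_1^n) > c)^2 \leq \frac{\1_{\X_{c+1}^n = 1}}{\binom{S_{c+1}^n}{2}}.\]
By \ref{hyp:f} and continuity of $f$, $c_0 \coloneqq \inf_{t \in [\e, 1-\eta/2]} f(t) > 0$, so \ref{hyp:convf} implies $S_{c+1}^n \geq c_0 n^\alpha/2$ uniformly for $c \in [\e n, (1-\eta) n]$ and $n$ large. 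Using $\binom{k}{2} \geq k^2/4$ for $k \geq 2$, each summand is $O(n^{-2\alpha})$, and summing over $O(n)$ values of $c$ bounds the bulk by $O(n^{1-2\alpha}) \to 0$ since $\alpha > 1/2$.

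For the tail, I would use $\coal_n(u,v) \leq \birth_n(u) \wedge \birth_n(v)$, the independence of $V_1^n, V_2^n$, and Lemma \ref{lem:bunif}:
\[\P\left(\coal_n(V_1^n, V_2^n) > (1-\eta) n\right) \leq \P(\birth_n(V_1^n) > (1-\eta) n)^2.\]
Lemma \ref{lem:bunif}, together with the fact that $S_n^n$ and $S_{\lfloor (1-\eta)n\rfloor}^n$ are $O(n^\alpha) = o(n)$, shows the right-hand side converges to $\eta^2$. Combining yields $\limsup_n \P(\coal_n(V_1^n, V_2^n) \geq \e n) \leq \eta^2$, and sending $\eta \to 0$ concludes.

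The main design point of the argument is the bulk/tail split: if $f(1)=0$ then $S_i^n$ may be very small for $i$ near $n$, so the naive sum $\sum_{c \in [\e n, n]} 1/\binom{S_{c+1}^n}{2}$ need not vanish; the tail is instead controlled by the rarity of late-born uniform vertices, quantified via Lemma \ref{lem:bunif}.
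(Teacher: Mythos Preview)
Your proof is correct and follows essentially the same approach as the paper's: the same bulk/tail split around $(1-\eta)n$, the bulk controlled via Lemma~\ref{loi du temps de coalescence} by $\sum 1/\binom{S_i^n}{2} = O(n^{1-2\alpha})$, and the tail controlled via $\coal_n \leq \birth_n(V_1^n)\wedge \birth_n(V_2^n)$ together with Lemma~\ref{lem:bunif}. The only cosmetic difference is that the paper evaluates the bulk sum as a Riemann sum via Lemma~\ref{lem:hnalpha}(i), whereas you use a direct uniform lower bound $S_{c+1}^n \geq (c_0/2)n^\alpha$ on the relevant range; both yield the same $O(n^{1-2\alpha})$ estimate.
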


\begin{proof}
Let $\e,\delta \in(0,1)$ with $\e<1-\delta$. By Lemma \ref{loi du temps de coalescence},
\[
\proba{\coal_{n}(V^{n}_{1},V^{n}_{2}) \ge \e n}\leq\proba{\birth_{n}(V^{n}_{1}) \wedge \birth_{n}(V^{n}_{2}) \ge (1-\delta)n} + 
\sum_{\substack{\e n \leqslant i \leqslant (1-\delta)n \\ \X^n_i=1}} \frac{1}{\binom{S_i^n}{2}}.
\]
By Lemma \ref{lem:bunif} and the fact that $S_{n}^n = o(n)$,
\[
\proba{\birth_{n}(V^{n}_{1}) \wedge \birth_{n}(V^{n}_{2}) \ge (1-\delta)n} \mathop{\longrightarrow}\limits_{n \to \infty} \delta^2.
\]
The remaining sum is a $O(n^{1-2\alpha})$ by Lemma \ref{lem:hnalpha} (v).  Making $\delta\to0$ we conclude that $\proba{\coal_{n}(V^{n}_{1},V^{n}_{2}) \ge \e n}\to0$ when $n\to\infty$.
\end{proof}

\begin{proof}[Proof of Theorem \ref{thm:sur-critique}]
By Proposition \ref{height=ok} it suffices to show that
\begin{equation}\label{eq1 thm sur-critique}
\frac{1}{n^{1-\alpha}}\sum_{i=1}^{\birth_n(V_1^n)} \frac{1}{2S^n_i} 
\cvloi[n] 
\int_{0}^{U_1} \frac{\mathrm{d}t}{2f(t)}
\end{equation}
and that
\begin{equation}\label{eq2 thm sur-critique}
\left(\frac{1}{n^{1-\alpha}}\left(\sum_{i=\coal_n(V^n_j,V^n_\ell)}^{\birth_n(V^n_j)} \frac{1}{2S^n_i} 
+\sum_{i=\coal_n(V^n_j,V^n_\ell)}^{\birth_n(V^n_\ell)} \frac{1}{2S^n_i}
\right)\right)_{1\le j,\ell\le k} \cvloi[n] \left(\int_0^{U_j } \frac{\mathrm{d}t}{2f(t)}+\int_0^{U_\ell } \frac{\mathrm{d}t}{2f(t)}\right)_{1\le j,\ell\le k}.
\end{equation}
We know that $S^n_n=o(n)${, so that $(V^{n}_{i})_{1 \leq i \leq k}$ are frozen with probability tending to $1$. In addition,} conditionally on being frozen, $\birth_n(V^n_1)$ is uniform in $\left\{i \in [n]; \ \X^n_i=-1\right\}$. Therefore, by Lemma \ref{lem:hnalpha} (ii), we know that $\birth_n(V^n_1)/n$ converges in distribution towards $U_1$.
The convergence (\ref{eq1 thm sur-critique}) then comes from \ref{hyp:conv1surf}. For (\ref{eq2 thm sur-critique}), we again use \ref{hyp:conv1surf}, using also that $\coal_n(V^n_j,V^n_\ell)/n$ converges in probability towards zero by Lemma \ref{lemme ça coalesce en zéro}.
\end{proof}

\section{Scaling limits: critical regime}
\label{sec:critical}

The goal of Section \ref{sec:critical} is to establish Theorem \ref{thm:critique}. We first explain in Section \ref{sec:deflim} the strategy of the proof of Theorem \ref{thm:critique}.  Roughly speaking, the identification of the ``finite dimensional marginal distributions'' (Gromov--Prokhorov convergence) is based on a continuous-time coalescent process,  defined in Section \ref{coalescent continu}, and which is closely related to a non-homogeneous analogue of Kingman's coalescent introduced by Aldous in \cite{A98} . The law of the genealogy of clusters in this continuous-time coalescent process can be explicitly described (Section \ref{ssec:genealogy}), which allows us to establish the Gromov--Prokhorov convergence of Theorem \ref{thm:critique}. Tightness is then established in  Section \ref{ssec:tight}.

\begin{table}[htbp]\caption{{Table of the main notation and symbols introduced in Section \ref{sec:critical} and used later.}}
\centering
{\begin{tabular}{c c p{12cm} }
\toprule
$\mathfrak{F}^+_{r,k}$ && the set of ordered forests of $r+1$ plane binary trees with $2k-1-r$ vertices with a vertex-labelling from $r+1$ to $2k-1$ which increases along the branches. \\
$\Delta^{n}$ &&
       quantity defined for $u,v \in  \V_{n}(\Xb)$ by $\displaystyle \Delta^{n}(u,v) = \frac{1}{2}(\birth_n(u)+\birth_n(v)-2 \coal_n(u,v))$. \\
\bottomrule
\end{tabular}
}
\label{tab:seccritical}
\end{table}

{
From now on we assume that the assumptions of Theorem \ref{thm:critique} are in force.
}

\subsection{Definition of the limit}
\label{sec:deflim}

The way  we define the limiting compact measured tree $\T(f)$ is rather indirect. For every $ k \geq 1$, let $V_1^n,\ldots,V_k^n$ be $k$ i.i.d.~vertices in $\V_{n}$ chosen uniformly at random (conditionally given $\T^n$). Our strategy is to establish the following two facts:
\begin{enumerate}[noitemsep,nolistsep]
\item[--] Gromov--Prokhorov convergence: 
\begin{equation}
\label{eq:GP}
\left(\frac{d^n(V_j^n,V_\ell^n)}{\sqrt{n}} \right)_{1\le j,\ell\le k}
\end{equation}
converges in distribution as $ n \rightarrow \infty$, where we recall that $d^{n}$ denotes the graph distance on $\T^{n}$. This is established in Sec.~\ref{sec:GPcv}
\item[--] ``Leaf-tightness'': for every $\varepsilon>0$
\begin{equation}
\label{eq:GH}
\lim_{k\to \infty} \limsup_{n\to \infty} \proba{\max_{v\in \V_{n}}  \min_{1\leq i \leq k} \frac{d^n(v,V^{n}_i)}{\sqrt{n}}>\e}=0.
\end{equation}
This is established in Sec.~\ref{ssec:tight}.
\end{enumerate} 

Indeed, by Lemma \ref{C.1}, \eqref{eq:GP} and \eqref{eq:GH} entail the existence of a limiting compact metric space $\T(f)$ equipped with a probability measure $\mu$ with full support such that the convergence of Theorem \ref{thm:critique} holds. The compact metric space $\T(f)$ is a real tree, being the Gromov--Hausdorff limit of real trees.

\subsection{A continuous-time coalescent}
\label{coalescent continu}

Here we introduce a continuous-time coalescent process, which appears in the limit of the quantity \eqref{eq:GP}.  Let $f:[0,1] \to \R_+$ be a measurable function
satisfying the following conditions:
\[ \int_0^1 \frac{1}{f(t)}\mathrm{d}t < \infty, \quad ; \quad \forall a,b \in (0,1), \ \int_a^b \frac{1}{f(t)^2} \mathrm{d}t <\infty. \]
The continuous coalescent construction proceeds as follows (see Figure \ref{fig aldous}): let each of $k$ particles be born at independent uniform random times $B_1,\ldots,B_k$ in $[0,1]$. Particles coalesce into clusters according to the following rule: in time $[t,t-\mathrm{d}t]$, each pair of clusters merges with rate $\frac{1}{f(t)^2}$. If we furthermore assume
\begin{equation}\label{condition3f}
\forall a \in (0,1), \qquad \int_0^a \frac{1}{f(t)^2} \mathrm{d}t = \infty
\end{equation}
this ensures that all the particles will eventually merge into one cluster (Aldous assumes this condition in \cite{A98} but it will not be necessary for our purpose). Since we will not assume this condition, two clusters may not merge with positive probability. If there are $r$ clusters which have not coalesced at time $0$, we  denote by $0<C_{r+1}< \ldots< C_{k-1}$ the times of coalescence into clusters in the increasing order (they are a.s. distinct) and by convention, we also set $C_1= \cdots = C_r=0$.

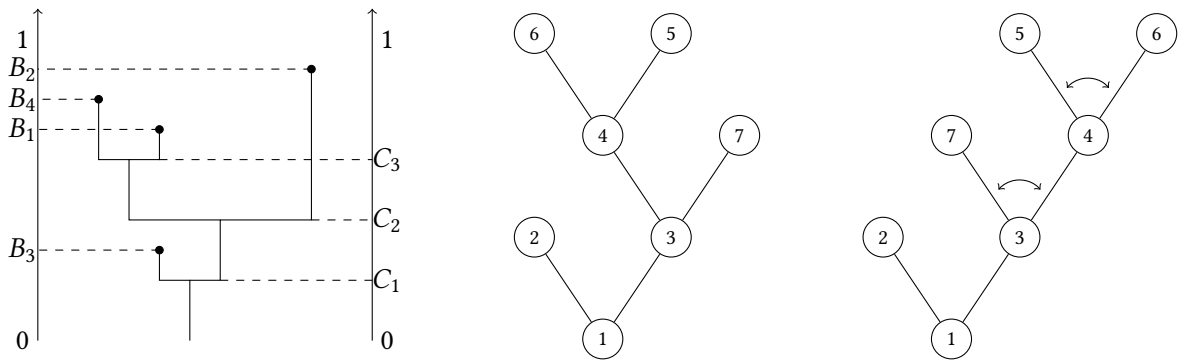
\begin{figure}[h!]
\begin{center}
\begin{tikzpicture}[scale=0.4,
point/.style={draw,circle,fill,inner sep=1pt}]
\draw[->] (0,0) -- (0,11);
\draw[->] (11,0) -- (11,11);

\draw (5,0) -- (5,2) -- (4,2) -- (4,3);
\draw (5,2) -- (6,2) -- (6,4) -- (9,4) -- (9,9);
\draw (6,4) -- (3,4) -- (3,6) -- (4,6) -- (4,7);
\draw (3,6) -- (2,6) -- (2,8);

\node[point] (1) at (4,3) {};
\node[point] (2) at (9,9) {};
\node[point] (3) at (4,7) {};
\node[point] (4) at (2,8) {};

\node (a) at (-0.5,0) {$0$};
\node (b) at (11.5,0) {$0$};
\node (c) at (-0.5,10) {$1$};
\node (d) at (11.5,10) {$1$};

\node (b1) at (-0.5,7) {$B_1$};
\node (b2) at (-0.5,9) {$B_2$};
\node (b3) at (-0.5,3) {$B_3$};
\node (b4) at (-0.5,8) {$B_4$};

\draw[dashed] (4,3) -- (0,3);
\draw[dashed] (9,9) -- (0,9);
\draw[dashed] (4,7) -- (0,7);
\draw[dashed] (2,8) -- (0,8);

\draw[dashed] (6,2) -- (11,2);
\draw[dashed] (9,4) -- (11,4);
\draw[dashed] (4,6) -- (11,6);

\node (c1) at (11.5,2) {$C_1$};
\node (c2) at (11.5,4) {$C_2$};
\node (c3) at (11.5,6) {$C_3$};

\end{tikzpicture}
\hspace{3em}
\begin{tikzpicture}[scale=0.9,
sommet/.style = {draw,circle, font=\scriptsize,inner sep=0,minimum size=15pt}]

\node[sommet] (1) at (0,0) {$1$};
\node[sommet] (2) at (-1,1.5) {$2$};
\node[sommet] (3) at (1,1.5) {$3$};
\node[sommet] (4) at (0,3) {$4$};
\node[sommet] (5) at (1,4.5) {$5$};
\node[sommet] (6) at (-1,4.5) {$6$};
\node[sommet] (7) at (2,3) {$7$};

\draw (1) -- (3) -- (4) -- (6);
\draw (1) -- (2);
\draw (3) -- (7);
\draw (4) -- (5);

\end{tikzpicture}
\hspace{3em}
\begin{tikzpicture}[scale=0.9,
sommet/.style = {draw,circle, font=\scriptsize,inner sep=0,minimum size=15pt}]

\node[sommet] (1) at (0,0) {$1$};
\node[sommet] (2) at (-1,1.5) {$2$};
\node[sommet] (3) at (1,1.5) {$3$};
\node[sommet] (4) at (0,3) {$7$};
\node[sommet] (5) at (3,4.5) {$6$};
\node[sommet] (6) at (1,4.5) {$5$};
\node[sommet] (7) at (2,3) {$4$};

\draw (1) -- (3) -- (4);
\draw (1) -- (2);
\draw (3) -- (7) -- (6);
\draw (7) -- (5);

\draw[<->] (1.3,2.2) arc(40:140:0.4);
\draw[<->] (2.3,3.7) arc(40:140:0.4);
\end{tikzpicture}
\caption{Illustration of the continuous coalescence construction on the left for $k=4$. In the middle is represented the binary forest $\mathcal F$ composed here of only one tree  associated with this construction. On the right is the binary forest where the vertices of each internal node have been exchanged uniformly at random. In this case, the child of the vertices $3$ and $4$ have been exchanged but those of $1$ have not.}
\label{fig aldous}
\end{center}
\end{figure}

This process is actually a time-change of a non-homogeneous analogue of Kingman's coalescent introduced by Aldous in \cite{A98}, see Remark \ref{rem:timechange} below.

\subsection{The genealogy of clusters in the continuous-time coalescent}
\label{ssec:genealogy}

The genealogy of the clusters is described by an increasing binary forest $\mathcal{F}$. The trees forming the forest are associated with the clusters which have not merged at time $0$. Each internal vertex in one of these trees corresponds to a coalescence of two clusters and the leafs correspond to the particles' births. We also give a uniform random order to the children of each internal node, so that the trees of $\mathcal{F}$ can be viewed as plane trees. We equip $\mathcal{F}$ with a labelling from $r+1$ to $2k-1$ of its vertices according to the order of the times of coalescence or birth. If $k>r\ge 0$, we denote by $\mathfrak{F}^+_{r,k}$ the set of ordered forests of $r+1$ plane binary trees with $2k-1-r$ vertices with a vertex-labelling from $r+1$ to $2k-1$ which increases along the branches. If there are $r+1$ clusters at time $0$, then the forest $\mathcal{F}$ belongs to $\mathfrak{F}^+_{r,k}$. 
To express the law of $\left(\mathcal{F}, (C_j)_{j \in [k-1]}, (B_j)_{j \in [k]}\right)$, we first describe what will be its support.

\begin{definition}
Let $k>r\ge 0$. Let $\mathcal{F}_0 \in \mathfrak{F}^+_{r,k}$. Let $b_1,\ldots,b_k \in \R_+$ and $0\le c_{r+1}<\ldots < c_{k-1}$ be distinct real numbers. The triplet $\left(\mathcal{F}_0, (c_j)_{j \in \{r+1,\ldots,k-1\}}, (b_j)_{j \in [k]}\right)$ is called admissible if there is an increasing bijection $\varphi: \left\{c_j ; \ j\in \{ r+1,\ldots, k-1 \} \right\} \cup \left\{b_j ; \ j \in [k]\right\} \to \{r+1,\ldots,2k-1\}$ such that for all $j \in [k]$, the integer $\varphi(b_j)$ is the label of a leaf in $\mathcal{F}_0$ and for all $j \in  \{r+1,\ldots,k-1\}$, the integer $\varphi(c_j)$ is the label of an internal vertex of $\mathcal{F}_0$.
\end{definition}

Roughly speaking, $\left(\mathcal{F}_0, (c_j)_{j \in \{r+1,\ldots,k-1\}}, (b_j)_{j \in [k]}\right)$  is admissible if it is possible to associate the times $(c_{j})$ with cluster coalescence times and $(b_{j})_{j \in [k]}$ with particle birth times  in such a way that their relative order is compatible with $\mathcal{F}_0$. In the example of Fig.~\ref{fig aldous} we have $r=0$, $B_{1}=7/10$, $B_{2}=9/10$, $B_{3}=3/10$, $B_{4}=8/10$, $C_{1}=2/10$, $C_{2}=4/10$, $C_{3}=6/10$ and $\varphi$ is defined by $\varphi(2/10)=1$, $\varphi(3/10)=2$, $\varphi(4/10)=3$, $\varphi(6/10)=4$, $\varphi(7/10)=5$, $\varphi(8/10)=6$, $\varphi(9/10)=7$.

\begin{proposition}\label{loi du coalescent généralisé}
For all $k>r\ge 0$, for all $\mathcal{F}_0 \in \mathfrak{F}^+_{r,k}$, for all $b_1, \ldots , b_k \in [0,1]$ and $0<c_{r+1}<\ldots < c_{k-1}<1$ distinct real numbers such that the triplet $\left(\mathcal{F}_0, (c_j)_{j \in \{r+1,\ldots, k-1\}}, (b_j)_{j \in [k]}\right)$ is admissible, we set
\[
g_{r,k}(\mathcal{F}_0,(b_j)_{1\le j\le k}, (c_j)_{j \in \{r+1,\ldots, k-1\}})=   \frac{1}{2^{k-1-r}}\exp\left( -\int_0^1 \frac{ \binom{a(t)}{2}}{f(t)^2} \mathrm{d}t \right) \prod_{j=r+1}^{k-1} \frac{1}{f(c_j)^2}
\]
where $a(t) \coloneqq \#\{j\in [k] ; b_j>t \} -\#\{j\in \{r+1,\ldots, k-1\} ; c_j>t \}$ corresponds to the number of clusters at time $t$.
Then for all $k\ge 1$, the joint law of the increasing binary forest $\mathcal{F}$ associated with the genealogy (where the order of the children of each inner vertex is chosen uniformly at random), of the coalescing times $C_1\le \ldots\le C_{k-1}$ and birth times $B_1,\ldots,B_k$ is given by: for all $r \in \llbracket 0, k-1 \rrbracket$, for all $\mathcal{F}_0 \in \mathfrak{F}^+_{r,k}$, 
\begin{align*}
\P&(\mathcal{F}=\mathcal{F}_0, B_1 \in \mathrm{d}b_1, \ldots, B_k \in \mathrm{d}b_k, C_1=0,\ldots,C_r=0,C_{r+1} \in \mathrm{d}c_{r+1}, \ldots, C_{k-1} \in \mathrm{d}c_{k-1}) \\
&=g_{r,k}\left(\mathcal{F}_0, (b_j)_{j \in [k]}, (c_j)_{j \in \{r+1,\ldots, k-1\}}\right)\mathrm{d}b_1 \cdots \mathrm{d}b_k \mathrm{d}c_{r+1} \cdots \mathrm{d}c_{k-1}.
\end{align*}
\end{proposition}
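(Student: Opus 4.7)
The plan is to compute the density by conditioning on the birth times and then applying the standard likelihood formula for inhomogeneous Markov jump processes. Fix $r$, $\mathcal{F}_0\in\mathfrak{F}^+_{r,k}$, birth times $(b_j)$ and coalescence times $(c_j)$ as in the statement, and assume the triplet $(\mathcal{F}_0,(c_j),(b_j))$ is admissible (if not, both sides vanish because the prescribed order of events is incompatible with the labelling of $\mathcal{F}_0$).

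Since $B_1,\dots,B_k$ are i.i.d.\ uniform on $[0,1]$, the joint density of the birth times at $(b_1,\dots,b_k)$ equals $1$, so it suffices to compute the density of $(\mathcal{F},C_{r+1},\dots,C_{k-1})$ conditionally on the birth times. Read backward in time from $t=1$ to $t=0$, the set of clusters evolves as an inhomogeneous pure-jump Markov process: when $a$ clusters are present at time $t$, each unordered pair merges at rate $1/f(t)^2$, so the total jump rate is $\binom{a(t)}{2}/f(t)^2$ and the merging pair is chosen uniformly among the $\binom{a(t)}{2}$ pairs. The standard trajectory-density formula for such processes, obtained as (no-jump probability on $[0,1]$) $\times$ $\prod$(rate of any jump at $c_j$) $\times$ (probability the prescribed pair is chosen), gives the joint density of the coalescence times together with the identity of the pair merging at each of them as
\[
\exp\!\left(-\int_0^1 \frac{\binom{a(t)}{2}}{f(t)^2}\,\mathrm{d}t\right) \prod_{j=r+1}^{k-1} \frac{\binom{a(c_j)}{2}}{f(c_j)^2}\cdot\frac{1}{\binom{a(c_j)}{2}} \;=\; \exp\!\left(-\int_0^1 \frac{\binom{a(t)}{2}}{f(t)^2}\,\mathrm{d}t\right) \prod_{j=r+1}^{k-1} \frac{1}{f(c_j)^2}.
\]
In particular, the exponential factor simultaneously encodes that no extra coalescence occurs between the prescribed jumps and that the remaining $r+1$ clusters indeed fail to merge by time $0$.

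The output of this pure-jump process is an \emph{unordered} binary forest; by construction $\mathcal{F}$ is obtained by picking, independently and uniformly at random, an order on the two children of each of the $k-1-r$ internal vertices, which contributes an additional factor $1/2^{k-1-r}$. Multiplying by the birth-time density $1$ gives $g_{r,k}$ exactly. The main work is thus bookkeeping: tracking how $a(t)$ jumps (up at each $b_j$, down at each $c_j$) as the events are interlaced, and checking that the admissibility bijection $\varphi$ captures precisely the event that the trajectory realises the prescribed plane forest $\mathcal{F}_0$ with the correctly labelled internal vertices at the correctly labelled times.
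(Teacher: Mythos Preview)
Your argument is correct and is essentially the same as the paper's proof, which simply refers to Aldous' derivation of the analogous density and notes the two extra ingredients you spell out explicitly: the survival factor $\exp\bigl(-\int_0^{c_{r+1}}\binom{r+1}{2}/f(t)^2\,\mathrm{d}t\bigr)$ for the $r+1$ clusters that fail to merge by time~$0$ (which in your formulation is absorbed into the single exponential over $[0,1]$), and the factor $1/2^{k-1-r}$ coming from the uniform random planar ordering at each of the $k-1-r$ internal vertices.
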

\begin{proof}
The proof is the same as the proof of the density formula (2) in \cite{A98} except that we use that the probability that none of the $r+1$ clusters coalesce between time $c_{r+1}$ and $0$ is
\[
\exp \left({-\int_0^{c_{r+1}} \frac{\binom{r+1}{2}}{f(t)^2} \mathrm{d}t} \right).
\]
The factor ${1}/{2^{k-1-r}}$ comes from the uniform order assigned to the children of each internal node of $\mathcal{F}$.
\end{proof}

\begin{remark}
\label{rem:timechange}
The change of time relating the continuous coalescent in \cite{A98} and the continuous coalescent presented here is constructed as follows: for all $s \in [0,1]$, let $G(s) = \int_0^s \frac{1}{2f(t)} \mathrm{d}t$, which is an increasing continuous function from $[0,1]$ to $\R_+$. We define $F$ as the inverse of $G$, extended to $\R_+$ by setting $F(t)=1$ if $t>G(1)$. Let $\ell(t)=2f(F(t))$
. Then one can show that if we change the time by $G$, the continuous coalescent becomes the one introduced in \cite{A98}. Besides, if we assume  condition \eqref{condition3f}, then the law of the coalescent is fully characterized by the density $g_{0,k}$ by Proposition \ref{loi du coalescent généralisé}, which is nothing else but the time-change of the density (2) in \cite{A98}. 
\end{remark}

\subsection{Gromov--Prokhorov convergence}
\label{sec:GPcv}

Our goal is now to establish \eqref{eq:GP}. Recall from \eqref{eq:labels} the notation $\Fbb_{n}(\Xb^n)=  \{i \in [n]: \X_{i}^n=-1\}$, which denotes the labels of the frozen vertices of $\T
^{n}$.
Let $V_1^n,\ldots,V_k^n$ be $k$ vertices in $\T^n$ chosen uniformly (conditionally to the construction of $\T^n$). Let $\mathcal{B}^n$ be the (rooted non-plane) binary tree associated with the genealogy of those $k$ vertices in the coalescent construction: its leaves correspond to the birth-times $B_1^n\coloneqq \birth_n(V_1^n),\ldots, B_k^n\coloneqq \birth_n(V_k^n)$, of $V_1^n,\ldots,V_k^n$ and its internal vertices correspond to a coalescence of two trees which contain each one a $V^n_i$ for $i\in [k]$. 

Note that $B_1^n+1,\ldots, B_k^n+1 \in \Fbb_{n}(\Xb^n)$ with large probability when $n\to \infty$, since the number of non frozen vertices at time $n$ is of order $\sqrt{n}$. Moreover, conditionally on the fact that they belong to $\Fbb_{n}(\Xb^n)$, the integers $B_1^n+1, \ldots, B_k^n+1$ are taken uniformly in $\{ i \in  [n]; \X^n_i=-1\}$. In the sequel, for the sake of simplicity, we will thus assume implicitly that they belong to $\Fbb_{n}(\Xb^n)$.  

Each coalescence between two {rooted} trees both containing vertices among $V_1^n,\ldots,V_k^n$ in the coalescent construction is associated with an internal node $v$ of $\mathcal{B}^n$ and we write those coalescence times in the increasing order $C^n_1<\ldots<C^n_{k-1}$. We also label the vertices of $\mathcal{B}^n$ from $1$ to $2k-1$ in the increasing order of the associated coalescence times or birth times. We then equip $\mathcal{B}^n$ with an ordering of the children of each internal nodes, taking into account which one of the two roots remains the root of the tree after the coalescence, so that $\mathcal{B}^n$ is a plane tree. With these additional structures, $\mathcal{B}^n \in \mathfrak{F}^+_{0,k}$. If $r \in \{0,\ldots,k-1\}$, we denote by $\mathcal{F}^n_r$ the increasing binary forest in $\mathfrak{F}^+_{r,k}$ which describes the genealogy but forgetting the $r$ last coalescences. 

Let us state a local limit estimate for $\left(\mathcal{F}^n, (B_j^n)_{j \in [k]}, (C_j^n)_{j \in [k-1]} \right)$. 

\begin{lemma}
Assume \ref{hyp:conv1surf} and \ref{hyp:conv1surf2}. Let $k>r\ge 0$. Let $\mathcal{F}_0 \in \mathfrak{F}^+_{r,k}$. 
For all $n\ge 1$, independently of $\T^n$, let $\widetilde{B}_1^n,\ldots,\widetilde{B}_k^n$ be i.i.d.\@ uniform random variables in $\left\{0\le i <n , \ \X^n_{i+1}=-1\right\}$ and let $\widetilde{C}_{r+1}^n,\ldots,\widetilde{C}_{k-1}^n$ be an independent sequence of i.i.d.\@ uniform random variables in $\left\{0\le i <n, \ \X^n_{i+1}=1\right\}$.  Let $\widetilde{B}_1, \ldots , \widetilde{B}_k $ and $\widetilde{C}_{r+1},\ldots,  \widetilde{C}_{k-1}$ be i.i.d.\@ uniform random variables in $[0,1]$. Then we have the convergence in distribution
\begin{align}
\left(\frac{n }{2}\right)^{2k-1-r}\P&\left( \left. 
\begin{matrix}
	\mathcal{F}^n_r=\mathcal{F}_0, \  \forall j \in [k],  B^n_j=\widetilde{B}_j^n, \forall j \in [r], C^n_j\le \e n  \\
	\text{ and } \forall j \in \{r+1,\ldots,k-1\},  C^n_j=\widetilde{C}_j^n  
\end{matrix}
\right\vert (\widetilde{B}^n_j)_{j \in [r]}, (\widetilde{C}^n_j)_{r+1 \le j \le k-1} \right) \notag\\
&\cvloi[n]\mathop{\longrightarrow}\limits_{\e \to 0}^{(d)}
 g_{r,k}\left(\mathcal{F}_0, (\widetilde{B}_j)_{j\in [k]}, (\widetilde{C}_j)_{j \in \{r+1,\ldots, k-1 \}}\right),\label{eq:cv locale}
\end{align}
where by convention the function $g_{r,k}$ vanishes on non-admissible triples. In particular, if, for all $i,j\in [k]$ we denote by $C_{i,j}$ the coalescing time between the cluster containing the $i$-th particle with the cluster containing the $j$-th particle, then we have
\begin{equation}
\label{cv coalescent}
\left(\left({B_j^n}/{n}\right)_{j \in [k]}, \left({\coal_n(V^n_i,V^n_j)}/{n}\right)_{i,j \in [k]}\right) \cvloi[n]  \left( (B_j)_{j \in [k]}, (C_{i,j})_{i,j \in [k]}\right) .
\end{equation}
\end{lemma}
\begin{proof}
By Lemma \ref{lem:hnalpha} (ii) and by Skorokhod's representation theorem we may assume that
$$
\forall j \in [k], \ \frac{\widetilde{B}^n_j}{n} \cvps[n] \widetilde{B}_j \qquad \text{and}
\qquad
\forall j \in \{r+1, \ldots k-1\}, \ 
\frac{\widetilde{C}^n_j}{n}
\cvps
\widetilde{C}_j.
$$

For all $i\in[n]$, we set
\[a^n_i=\#\{j\in [k]; \ \widetilde{B}_j^n\ge i\}-\#\{j\in \{r+1,\ldots,k-1\}; \ \widetilde{C}_j^n\ge i\}.\]
Notice that if we replace the $\widetilde{C}^n_j$ by $C^n_j$ and $\widetilde{B}^n_j$ by $B^n_j$, then for $i\ge C_{r}^n+1$ we have that $a^n_i$ is the number of trees in $\F^n_i(\Xb^n)$ containing vertices among $V^n_1,\ldots,V^n_k$.

Let us denote by $\widetilde{\P}$ the conditional probability $\P(\cdot \vert (\widetilde{B}^n_j)_{j \in [k]}, (\widetilde{C}^n_j)_{j \in \{r+1,\ldots, k-1\}})$. Then, {by the definition of Algorithm \ref{algo2}}, when the triple $(\mathcal{F}_0, (\widetilde{B}_j)_{j\in [k]}, (\widetilde{C}_j)_{j \in \{r+1,\ldots, k-1 \}})$ is admissible, we have
\begin{align*}
\widetilde{\P}&\left(C_1^n\le \e n, \ldots, C_r^n \le \e n, \mathcal{F}^n_r=\mathcal{F}_0, \  \forall j \in [k],  B^n_j=\widetilde{B}_j^n \text{ and } \forall j \in \{r+1,\ldots, k-1\},  C^n_j=\widetilde{C}_j^n \right) \\
=&\widetilde{\P}\left( \mathcal{F}^n_r=\mathcal{F}_0, \  \forall j \in [k],  B^n_j=\widetilde{B}_j^n \text{ and } \forall j \in \{r+1,\ldots, k-1\},  C^n_j=\widetilde{C}_j^n \right)\\
&-\widetilde{\P}\left(C_r^n > \e n, \mathcal{F}^n_r=\mathcal{F}_0, \  \forall j \in [k],  B^n_j=\widetilde{B}_j^n \text{ and } \forall j \in \{r+1,\ldots, k-1\},  C^n_j=\widetilde{C}_j^n \right) \\
= &\frac{ 1}{k!{\binom{{(n-S^n_n+1)  / 2 +S_n^n}}{k}}} \left( \prod_{j=r+2}^{k} 
\prod_{\substack{i=\widetilde{C}^n_{j-1}+2\\ \text{s.t. }\X^n_i=1}}^{\widetilde{C}^n_j} \left( 1- \frac{\binom{a^n_i}{2}}{\binom{S_i^n}{2}}\right)\right) 
\prod_{j=r+1}^{k-1} \frac{1}{\displaystyle2\binom{S^n_{\widetilde{C}^n_j+1}}{2}}\\
&-\sum_{\e n <\widetilde{C}^n_r\le n} \binom{r+1}{2}\frac{ 1}{k!{\binom{{(n-S^n_n+1)  / 2 +S_n^n}}{k}}} \left( \prod_{j=r+1}^{k} 
\prod_{\substack{i=\widetilde{C}^n_{j-1}+2\\ \text{s.t. }\X^n_i=1}}^{\widetilde{C}^n_j} \left( 1- \frac{\binom{a^n_i}{2}}{\binom{S_i^n}{2}}\right)\right) 
\prod_{j=r}^{k-1} \frac{1}{2\displaystyle\binom{S^n_{\widetilde{C}^n_j+1}}{2}}
,
\end{align*}
where by convention $\widetilde{C}_k^n= n$. Indeed, the factor $ 1/({k!{\binom{{(n-S^n_n+1) / 2}}{k}}})$ comes from the choice of the uniform (distinct) birth-times $B^n_j$, the factor ${1 / (2\binom{S^n_{\widetilde{C}^n_j+1}}{2})}$ corresponds to the coalescence at time $\widetilde{C}^n_j$ of a pair of trees containing some $V^n_j$ but the choice of the trees is prescribed by $\mathcal{F}_0$ and its planar ordering. The factor $ 1- {\binom{a^n_i}{2} / \binom{S_i^n}{2}}$ encodes the fact that there {is no} coalescence between the trees containing $V^n_1, \ldots, V^n_k$ at times different from $\widetilde{C}^n_{r+1}, \ldots, \widetilde{C}^n_{k-1}$. Finally, the binomial coefficient $\binom{r+1}{2}$ comes from the different possibilities for the coalescence at time $C^n_r$ (since the forest $\mathcal{F}_0$ does not impose the genealogical structure for this coalescence).

By \ref{hyp:conv1surf}, by \ref{hyp:conv1surf2}, by \eqref{eq:cvminSinfini} and using Lemma \ref{lem:hnalpha} (ii),(iv),(v), it is easy to derive the convergence in probability
\begin{align*}
&\left(\frac{n}{2} \right)^{2k-1-r}
\widetilde{\P}\left(C_1^n\le \e n, \ldots, C_r^n \le \e n, \mathcal{F}^n_r=\mathcal{F}_0, \  \forall j \in [k],  B^n_j=\widetilde{B}_j^n \text{ and } \forall j \in \{r+1,\ldots, k-1\},  C^n_j=\widetilde{C}_j^n \right) \\
&\cvproba[n]
\frac{1}{2^{k-1-r}}
 \left(\prod_{j=r+2}^{k}  \exp\left( -\int_{\widetilde{C}_{j-1}}^{\widetilde{C}_j} \frac{ \binom{a(t)}{2}}{f(t)^2} \mathrm{d}t \right) \right)
\left( \prod_{j=r+1}^{k-1}\frac{1}{f(\widetilde{C}_j)^2} \right)
\left(1-\binom{r+1}{2} \int_\e^{\widetilde{C}_{r+1}} \exp\left(-\int_c^{\widetilde{C}_{r+1}} \frac{\binom{r+1}{2}}{f(t)^2}\mathrm{d}t\right) \frac{1}{f(c)^2} \mathrm{d}c\right)\\
&=
\frac{1}{2^{k-1-r}}   \exp\left( -\int_{\widetilde{C}_{r+1}}^{1} \frac{ \binom{a(t)}{2}}{f(t)^2} \mathrm{d}t \right) 
\left( \prod_{j=r+1}^{k-1}\frac{1}{f(\widetilde{C}_j)^2} \right)
\left(1-\binom{r+1}{2} \int_\e^{\widetilde{C}_{r+1}} \exp\left(-\int_c^{\widetilde{C}_{r+1}} \frac{\binom{r+1}{2}}{f(t)^2}\mathrm{d}t\right) \frac{1}{f(c)^2} \mathrm{d}c\right)
,
\end{align*}
where by convention $\widetilde{C}_k=1$, where $a(t)$ is defined in Proposition \ref{loi du coalescent généralisé} and where the convergence in probability of $n/(S^n_{\widetilde{C}^n_j +1})^2$ towards $1/f(\widetilde{C}_j)^2$ comes from the points (ii) and (iv) of Lemma \ref{lem:hnalpha}. Besides,
\[
1-\binom{r+1}{2} \int_\e^{\widetilde{C}_{r+1}} \exp\left(-\int_c^{\widetilde{C}_{r+1}} \frac{\binom{r+1}{2}}{f(t)^2}\mathrm{d}t\right) \frac{1}{f(c)^2} \mathrm{d}c
\mathop{\longrightarrow}\limits_{\e \to 0}\exp\left(- \int_0^{\widetilde{C}_{r+1}} \frac{\binom{r+1}{2}}{f(t)^2} \mathrm{d}t\right),
\]
hence (\ref{eq:cv locale}). 

For all $n\ge 1, \e>0$, let $R^n(\e)$ be the smallest $r'\ge 0$ such that $C^n_{r'}> \e n $. For all $(b^n_j)_{j\in [k]}, (c^n_j)_{j \in \{r+1,\ldots, k-1 \}}$, let
\begin{align*}
g_{\e,n,r,k}&\left(\mathcal{F}_0, (b^n_j)_{j\in [k]}, (c^n_j)_{j \in \{r+1,\ldots, k-1 \}}\right) \\
&=\left(n-\#\mathbb{F}_n({\Xb}^n)\right)^{k-1-r}(\# \mathbb{F}_n(\Xb^n))^{k} \P\left( \mathcal{F}^n_{R^n(\e)}=\mathcal{F}_0, \  \forall j \in [k],  B^n_j=b_j^n \text{ and } \forall j \in \{r+1,\ldots,k-1\},  C^n_j=c_j^n  \right).
\end{align*} 
Then, since $\#\mathbb{F}_n({\Xb}^n) \sim n/2$ as $n \rightarrow \infty$, (\ref{eq:cv locale}) can be rewritten as the convergence in probability
\begin{equation}\label{eq:cv locale bis}
\lim_{\e \to 0}\lim_{n\to \infty} g_{\e,n,r,k}\left(\mathcal{F}_0, (\widetilde{B}^n_j)_{j\in [k]}, (\widetilde{C}^n_j)_{j \in \{r+1,\ldots, k-1 \}}\right) 
=
 g_{r,k}\left(\mathcal{F}_0, (\widetilde{B}_j)_{j\in [k]}, (\widetilde{C}_j)_{j \in \{r+1,\ldots, k-1 \}}\right).
\end{equation}
Let us check that this implies the convergence in law
\begin{equation}\label{eq:cv loi foret et temps}
\left( \mathcal{F}^n_{R^n(\e)}, \  \left(\frac{B^n_j}{n}\right)_{j \in [k]},  \left(\frac{C^n_j}{n}\right)_{j \in [k-1]} \right)
\mathop{\longrightarrow}\limits_{n\to \infty, \e \to 0}^{(d)}
\left( \mathcal{F}, \  \left(B_j\right)_{j \in [k]},  \left(C_j\right)_{j \in [k-1]} \right).
\end{equation}
But by definition of $g_{\e,n,r,k}$ we have
$$
	\P\left( \mathcal{F}^n_{R^n(\e)}=\mathcal{F}_0 \right) =   
	 \E\left[
	g_{\e, n, r, k}\left(\mathcal{F}_0, \left(\frac{\widetilde{B}^n_j}{n}\right)_{j \in [k]} \! , \left(\frac{\widetilde{C}^n_j}{n}\right)_{j \in \llbracket r+1, k-1 \rrbracket}
	\right)
	\right].
$$
In addition,
\[
\E\left[ g_{r,k}\left(\mathcal{F}_0, (\widetilde{B}_j)_{j\in [k]}, (\widetilde{C}_j)_{j \in \{r+1,\ldots, k-1 \}}\right) \right]= \P\left( \mathcal{F}=\mathcal{F}_0 \right).
\]
Therefore, using (\ref{eq:cv locale bis}) with Fatou's lemma and then using the fact that if we sum over all possible $\mathcal{F}_0 \in \bigcup_{0\le r \le k-1} \mathfrak{F}^+_{r,k}$ then we get one,  one can see that
\[
\liminf_{\e \to 0} \liminf_{n \to \infty} \P\left( \mathcal{F}^n_{R^n(\e)}=\mathcal{F}_0 \right) 
= \P\left( \mathcal{F}=\mathcal{F}_0 \right),
\]
i.e.
\[
\liminf_{\e \to 0} \liminf_{n \to \infty} \E \left[g_{\e,n,r,k}\left(\mathcal{F}_0, (\widetilde{B}^n_j)_{j\in [k]}, (\widetilde{C}^n_j)_{j \in \{r+1,\ldots, k-1 \}}\right) \right]
=
\E\left[ g_{r,k}\left(\mathcal{F}_0, (\widetilde{B}_j)_{j\in [k]}, (\widetilde{C}_j)_{j \in \{r+1,\ldots, k-1 \}}\right) \right].
\]
{Here we use the following result: if $I$ is a finite set,  $(X_{n}^{k})_{n \geq 1, k \in I},(X^k)_{k \in I}$ are nonnegative random variables, if $X_{n}^{k} \rightarrow X^{k}$ almost surely as $n \rightarrow \infty$ for every $k \in I$, if for every $n \geq 1$ we have $\sum_{k \in I} \esp{X_{n}^{k}}=\sum_{k \in I} \esp{X^{k}}=1$, then $\esp{X_{n}^{k}} \rightarrow \esp{X^{k}}$ as $n \rightarrow \infty$ for every $k\in I$.} 
Thus, by Scheffé's lemma, the convergence in (\ref{eq:cv locale bis}) holds also in $\Lp^1$.
Let $\e_0>0$. Let $F: \bigcup_{0\le r<k} \mathfrak{F}_{r,k} \times [0,1]^k \times [0,1]^{k-1} \to \R$ be a continuous bounded function which is constant on the elements which have one coordinate in $[0,\e_0]$. Then for all $\e \in (0,\e_0)$,
\begin{align*}
	&\E\left[ F \left(\mathcal{F}^n_{R^n(\e)}, \left(\frac{B^n_j}{n}\right)_{j \in [k]}, \left(\frac{{C}^n_j}{n}\right)_{j \in [k-1]}
	\right)\right] \\
	&=  \! \! \!  \! \! \! \sum_{0\le r <k, \mathcal{F}_0\in \mathfrak{F}_{r,k}} \! \! \! \! \! \! \!
	 \E\left[
	F \left(\mathcal{F}_0, \left(\frac{\widetilde{B}^n_j}{n}\right)_{j \in [k]} \! , \left(\frac{\widetilde{C}^n_j}{n}\right)_{j \in [k-1]}
	\right)
	g_{\e, n, r, k}\left(\mathcal{F}_0, \left(\frac{\widetilde{B}^n_j}{n}\right)_{j \in [k]} \! , \left(\frac{\widetilde{C}^n_j}{n}\right)_{j \in \llbracket r+1, k-1 \rrbracket}
	\right)
	\right] \\
	&\mathop{\longrightarrow} \limits_{n \to \infty} \mathop{\longrightarrow} \limits_{\e \to 0}
	\!  \sum_{0\le r <k, \mathcal{F}_0\in \mathfrak{F}_{r,k}} \! \!	\E\left[
	F \left(\mathcal{F}_0, \left(\widetilde{B}_j \right)_{j \in [k]}  , \left( \widetilde{C}_j \right)_{j \in [k-1]}
	\right)
	g_{r, k}\left(\mathcal{F}_0, \left(\widetilde{B}_j\right)_{j \in [k]} , \left({\widetilde{C}_j}\right)_{j \in \llbracket r+1, k-1 \rrbracket}
	\right)
	\right] \\
	&   \qquad \qquad= \E \left[
	F\left(
	\mathcal{F}, (B_j)_{j \in [k]}, (C_j)_{j\in [k-1]}
	\right)
	\right],
\end{align*}
where by convention for all $j \in [r]$ we have set $\widetilde{C}^n_j=0$ and $\widetilde{C}_j=0$;
{here we use the following generalized version of the dominated convergence theorem: if $X_{n}$ converges almost surely to $X$, if $|X_{n}| \leq Y_{n}$ almost surely, if $Y_{n}$ converges in $\Lp^{1}$ to $Y$, then $X_{n}$ converges in $\Lp^{1}$ to $X$.} 
The convergence  (\ref{eq:cv loi foret et temps}) follows.

Now, one can see that given the forest $\mathcal{F}_{R^n(\e)}^n$ and given the order of the births, one can recover the order of the coalescences occurring at times greater than $\e n$, hence identifying to which $C^n_j$'s the $\coal_n(V^n_i,V^n_\ell)$'s which are greater than $\e n$ are equal. More precisely, for all $r \in \{0, \ldots, k-1\}$, for all $\mathcal{F}_0 \in \mathfrak{F}^+_{r,k}$, for all permutation $\sigma$ of $\{1, \ldots, k\}$ encoding a labelling by $1, \ldots, k$ of the leafs of $\mathcal{F}_0$, we associate the subset $I\subseteq [k]^2$ of couples of labelled leaves which have a common ancestor in $\mathcal{F}_0$ and the unique continuous function $\varphi_{\mathcal{F}_0, \sigma}$ from $\{ (c_{r+1}, \ldots, c_{k-1}) \in [0,1]^{k-r-1}, \ \e<c_{r+1} < \ldots <c_{k-1} \}$ to $[0,1]^I$ which sends $(c_{r+1}, \ldots, c_{k-1})$ to the unique family $(c_{i,j})_{(i,j) \in I}$ such that for all $(i,j) \in I$, the entry $c_{i,j}$ equals $c_\ell$ where the nearest common ancestor of the leaves labelled $i,j$ by $\sigma$ has the $\ell$-th smallest label among the internal nodes of the labelled forest $\mathcal{F}_0$.

Let $\e >0$ and $n\ge 1$. Let $\pi^n$ (resp. $\pi$) be the random uniform permutation associated with the ordering of $B^n_1, \ldots, B^n_k$ (resp. $B_1, \ldots, B_k$), which defines a labelling of the leaves of $\mathcal{F}^n_{R^n(\e)}$ (resp. $\mathcal{F}$).  Let $I^n_\e = \{ (i,j) \in [k]^2, \ \coal_n(V^n_i,V^n_j) > \e n\}$. Then
\[
\left(\frac{\coal_n(V^n_i,V^n_j)}{n}\right)_{(i,j) \in I^n_\e}
 = 
\varphi_{\mathcal{F}^n_{R^n(\e)},\pi^n} \left(\left(\frac{C^n_j}{n}\right)_{j \in \{R^n(\e)+1, \ldots, k-1\}}\right),
\]
hence the convergence in law (\ref{cv coalescent}) by (\ref{eq:cv loi foret et temps}).
\end{proof}

\begin{proof}[Proof of  \eqref{eq:GP}.]
Now, let us prove the convergence in law of $(d^n(V_j^n,V_\ell^n)/\sqrt{n})_{1\le j,\ell \le k}$. 
For all $j,\ell \in [k]$,
\[
\dc^n(V_j^n,V_\ell^n) = \sum_{\substack{i=\coal_n(V^n_j, V^n_\ell)}}^{B^n_j} \frac{1}{2S^n_i}
+
\sum_{\substack{i=\coal_n(V^n_j, V^n_\ell)}}^{B^n_\ell} \frac{1}{2S^n_i}.
\]
Using \ref{hyp:conv1surf}, Lemma \ref{lem:hnalpha} (ii),(iv), {Proposition \ref{height=ok}} and the convergence (\ref{cv coalescent}), it is henceforth straightforward to conclude that
\[
\label{eq:cvdn}
\left(\frac{d^n(V_j^n,V_\ell^n)}{\sqrt{n}} \right)_{1\le j,\ell\le k}
\cvloi[n]
\left(\int_{C_{j,\ell}}^{B_j} \frac{1}{2 f(t)} \mathrm{d}t
+\int_{C_{j,\ell}}^{B_\ell} \frac{1}{2 f(t)} \mathrm{d}t
\right)_{1\le j,\ell \le k}.
\]
\end{proof}

\subsection{Gromov--Hausdorff tightness}
\label{ssec:tight}

We now establish \eqref{eq:GH}. We first  introduce some notation: for $n \geq 1$, let $V^n$ and $(V_i^n)_{ i \geq 1	}$ be independent uniform vertices in $\V_{n}$. For simplicity, we drop the superscript and write $V, (V_{i})_{i \geq 1}$. For $u,v\in \V_{n}$ and $B \subset \V_{n}$ we write
\[
\Delta^{n}(u,v) \coloneqq \frac{1}{2}(\birth_n(u)+\birth_n(v)-2 \coal_n(u,v)), \qquad \Delta^n(u,B) \coloneqq \min_{v\in B} \Delta^{n}(u,v).
\]
{Observe that $\Delta^{n}$ satisfies the triangle inequality.}
Finally, we set
\[M_{n}\coloneqq\max_{0\leq i \leq n} S^n_i.\]
Our main technical input is the following uniform estimate.

\begin{proposition}
\label{tightness time estimate}
For every $k,n \geq 1$ large enough with $k\leq n^{1/3}$,
\[ \proba{\Delta^n(V,\{V_1,\dots, V_{k}\}) > \frac{\ln(k)^2}{\sqrt{k}} n } \leq  \frac{1}{k^{4}}.\]
\end{proposition}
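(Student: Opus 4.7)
The natural move is to condition on $V$ and the whole random tree $\T^n$: given this data, the vertices $V_1,\dots,V_k$ are i.i.d.\ uniform on $\V_n$, so that with $s := n\ln(k)^2/\sqrt k$ and
\[
A(V) := \{u \in \V_n : \Delta^n(V,u) \leq s\}, \qquad p := |A(V)|/|\V_n|,
\]
the left-hand side equals $\E[(1-p)^k]$. The task reduces to bounding this expectation by $k^{-4}$. Splitting at a threshold $p_\star = C\log(k)/k$ with $C$ large gives
\[
\E[(1-p)^k] \;\leq\; (1-p_\star)^k + \P(p<p_\star) \;\leq\; k^{-C} + \P(p<p_\star),
\]
so the heart of the matter is to establish $\P(p<p_\star) \lesssim k^{-4}$.

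\paragraph{Reduction to a coalescence event.} The identity $\Delta^n(V,u) = \tfrac12|\birth_n(V)-\birth_n(u)| + (\min(\birth_n(V),\birth_n(u))-\coal_n(V,u))$ combined with the monotonicity of the coalescent shows directly that, setting $b=\birth_n(V)$ and $t_0 = b-s/2$,
\[
u \in \mathcal{C}(V,t_0) \text{ and } |\birth_n(u)-b|\leq s \;\Longrightarrow\; u \in A(V).
\]
It therefore suffices to lower-bound $|\mathcal{C}(V,t_0)\cap\{u:|\birth_n(u)-b|\leq s\}|$ with probability $1-O(k^{-4})$.

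\paragraph{Controlling the cluster via Bennett.} This is where I would use the growth-coalescent of Algorithm~\ref{algo2}. At each step $i\in (t_0,b]$ with $\X^n_i=1$, the event ``$V$'s cluster is one of the two merged trees'' is a Bernoulli of parameter $2/S^n_i$, and these indicators are \emph{independent} across $i$ by construction. Their sum $M$, the number of merges involving $V$'s cluster, thus satisfies
\[
\E[M] \;=\; \sum_{i\in(t_0,b],\, \X_i^n=1} \tfrac{2}{S^n_i} \;\asymp\; \tfrac{s}{\sqrt n} \;=\; \sqrt{n/k}\,\ln(k)^2,
\]
which under the hypothesis $k\leq n^{1/3}$ is at least of order $k\ln(k)^2 \gg \log k$; Bennett's inequality (Proposition~\ref{prop:bennett}) then gives $M \geq \E[M]/2$ with probability $1-k^{-5}$.

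\paragraph{From merge count to membership count.} Each merge attaches to $\mathcal{C}(V,\cdot)$ a uniformly chosen partner cluster. By exchangeability of labels in the coalescent, a positive fraction of each such partner is, on average, made of vertices born in the window $[b-s,b+s]$; more precisely, the fraction of such vertices among all present at time $t_0$ is of order $s/n = \ln(k)^2/\sqrt k$. Averaging over the partner choices (and applying a second, conditional, Bennett-type argument to the sum over merges) converts the lower bound on $M$ into a lower bound $|A(V)|\gtrsim n\log(k)^2/k$, i.e.\ $p \gtrsim \log(k)^2/k$, with probability $1 - O(k^{-4})$. Plugging this into $(1-p)^k \leq e^{-kp}$ gives $(1-p)^k \leq e^{-c\log(k)^2} \ll k^{-4}$ on the good event, completing the argument.

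\paragraph{Main obstacle.} The delicate step is the second one: turning the concentration of the number of merges into concentration of the membership count of $A(V)$. In the critical regime cluster sizes fluctuate on scale $\sqrt n$, so one must be careful to condition on the filtration generated by the merge \emph{times} before exploiting the independence of the \emph{partner} choices. The condition $k\leq n^{1/3}$ enters precisely to make $\E[M]$ large enough — of order at least $k\log(k)^2$ — for Bennett to deliver the required $k^{-5}$ deviation bound on both levels.
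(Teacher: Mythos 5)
Your opening move (condition on $V$ and $\T^n$, write the event as $\E[(1-p)^k]$ with $p = |A(V)|/|\V_n|$, then reduce to a lower-tail bound on $p$) is sound, and the first-moment computation is correct: conditionally on $V$ the merge indicators $Y_i$ at steps $i\in(t_0,b]$ with $\X^n_i=1$ are indeed independent Bernoulli$(2/S^n_i)$, so $M$ concentrates by Bennett and one obtains $\E\bigl[|\mathcal{C}(V,t_0)\cap W|\bigr]\gtrsim n\ln(k)^4/k$. The identity $\Delta^n(V,u)=\tfrac12|b-b'|+\bigl(\min(b,b')-\coal_n(V,u)\bigr)$ is also correct.

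The genuine gap is exactly where you flag it, and I do not see how the proposed ``second, conditional Bennett-type argument'' can close it. What you need is a \emph{high-probability} (error $\lesssim k^{-4}$) lower bound on $\sum_j |C_j\cap W|$, but the increments $|C_j\cap W|$ are not bounded: the partner cluster at a merge is a uniformly chosen tree among $S^n_{i_j}-1$, and in the critical regime cluster sizes --- and hence the number of window vertices in a uniform cluster --- have heavy, Kingman-like tails. Bennett/Bernstein requires an a.s.\ bound on the summands (or bounded increments for a martingale version), which you do not have; Markov applied to $w-|\mathcal{C}(V,t_0)\cap W|$ gives only an $O(1)$ bound on the bad probability, far from $k^{-4}$. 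Moreover, the increments are not even conditionally independent given the merge times, since the number of window vertices remaining outside $V$'s cluster decreases as merges accumulate. So the step ``merge count $\Rightarrow$ membership count'' is not a technicality to be filled in along the proposed lines; it would need an entirely different concentration mechanism (truncation at a carefully chosen level together with a tail bound on partner cluster sizes, or a coupling), and none is supplied.

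For comparison, the paper avoids this issue altogether by never tracking the \emph{size} of $V$'s cluster. It fixes $p_k=\lfloor 100\ln k\rfloor$ disjoint time slices of length $\delta_{k,n}\asymp n/\sqrt k$, first shows (Step~1, via Bennett on birth times) that with probability $1-o(k^{-4})$ each slice contains $\gtrsim\sqrt k$ of the sampled $V_j$'s, and then uses the exchangeability trick of Lemma~\ref{OneStepTightness}: among $v_0,v_1,\dots,v_K$, the vertex $v_0$ with the latest birth time is the \emph{least} likely to stay alone, so $\P(v_0\text{ avoids all }v_j)\le\frac{1}{K+1}\E[N^n_t]$, where $N^n_t\le K+1$ is the (bounded!) number of clusters containing a marked vertex; the recursion $\frac{1}{\E[N^n_{t-1}]-1}-\frac{1}{\E[N^n_t]-1}\ge \1_{\X^n_t=1}/(M_n)^2$ then bounds $\E[N^n_t]$. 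Since the avoidance events over alternating slices are independent with probability $\le 3/4$ each, chaining $\Theta(\ln k)$ of them gives $o(k^{-4})$. Everything involved is uniformly bounded, so no heavy-tail control is needed. If you want to salvage your route, you would need to replace the direct lower bound on $|A(V)|$ by something similarly bounded --- e.g.\ the number of merges whose partner contains at least one $V_j$ --- which is essentially what the paper's $N^n_t$ accomplishes.

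Two smaller points you should also address if you pursue this: (i) when $b=\birth_n(V)< s/2$ the interval $(t_0,b]$ is truncated and your $\E[M]$ estimate degrades (this happens with probability $\asymp\ln(k)^2/\sqrt k\gg k^{-4}$), so the small-$b$ case needs a separate, easy argument using that $\Delta^n(V,u)\le\tfrac12(b+b')$; (ii) the implication $u\in\mathcal{C}(V,t_0)$ requires $\birth_n(u)\ge t_0$, so the window you actually capture is $[t_0,b+s]$ rather than $[b-s,b+s]$, which does not change orders of magnitude but should be stated correctly.
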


The idea underlining the proof of Proposition \ref{tightness time estimate} is the following: on every time-interval of length $n/\sqrt{k}$, we can find roughly $ \sqrt{k}$ vertices among $V_{1}, \ldots, V_{k}$ and the probability that $\sqrt{k}$ vertices do not coalesce with a given vertex during $n/\sqrt{k}$ time steps is  strictly positive. Let us first explain how tightness follows from this estimate.

\begin{proof}[Proof of Theorem \ref{thm:critique} (tightness) using Proposition \ref{tightness time estimate}]
\emph{Step 1.} We first check that for every $\e>0$ we have
\begin{equation}
\label{eq:stepone}\lim_{k\to \infty} \limsup_{n\to \infty} \proba{\max_{v\in \V_{n}} \Delta^n(v,\{V_1,\dots, V_k\})/n>\e}=0.
\end{equation}
To this end, we apply the so-called chaining method. Specifically,  for $k, n \geq 1$ we set
\[ \e^n_k \coloneqq   \max_{2^k<i\leq 2^{k+1}} \Delta^n(V_i,\{V_1,\dots, V_{2^k}\}),
\]
and roughly speaking, we show that $ \e^n_k/n\lesssim 2^{-k/2}$, where $\lesssim$ denotes an informal upper bound. Indeed, this entails
\[  \frac{1}{n}\max_{v\in \V_{n}} \Delta^n(v,\{V_1,\dots, V_{2^k}\}) \lesssim  \frac{1}{n}\sum_{i\geq k} \e^n_{i} \lesssim 2^{-k/2}\]
and \eqref{eq:stepone} will follow. Let us now be more precise. For $n \geq 1$, let $N_n$ be the largest integer with $2^{N_n}\leq n^{1/3}$  and set $\e^n_{\infty}\coloneqq\max_{v\in \V_{n}} \Delta^n(v,\{V_1,\dots, V_{2^{N_n}}\})$. By Proposition \ref{tightness time estimate}, for every $k,n$ large enough with $k< N_n$, 
\[ \proba{\e^n_k >k^2 2^{-k/2}n} \leq 2^k\proba{ \Delta^n(V,\{V_1,\dots, V_{2^k}\}) >k^2 2^{-k/2} n }\leq 2^{-3k}.\]
Similarly, since there are at most $n$ vertices in $\T^n$,
\[ \proba{\e^n_\infty >(N_n)^2 2^{-N_n/2}n} \leq n \proba{ \Delta^n(V,\{V_1,\dots, V_{2^{N_n}}\}) >(N_n)^2 2^{-N_n/2}n  }\leq n 2^{-4N_n}=o(1).\]
Hence, for every $k$ large enough, for every $n$ large enough, with probability at least {
\sout{$1-2\sum_{i=k}^\infty 2^{-3i}\geq 1-2^{-k}$} $1-\sum_{i=k}^\infty 2^{-3i}+o(1) \geq 1-2^{-k}+o(1)$, using the triangle inequality,} 
\[ \max_{v\in \V_{n}} \Delta^n(v,\{V_1,\dots, V_{2^k}\}) \leq \sum_{i=k}^{N_n-1} \e^n_i +\e^n_\infty \leq \sum_{i=k}^{N_n} i^2 2^{-i/2}n\leq c 2^{-k/3} n\]
{for some constant $c>0$.}
The convergence \eqref{eq:stepone} readily follows.

\emph{Step 2.} We now show \eqref{eq:GH}  using \eqref{eq:stepone}. By Proposition \ref{height=ok} (applied with $\alpha=1/2$), it suffices to show that for every $\e>0$,
\begin{equation}
\lim_{k\to \infty} \limsup_{n\to \infty} \proba{\max_{v\in \V_{n}} \min_{1\leq i \leq k} \dc^n(v,V_i)/\sqrt{n}>\e}=0. \label{16/01/11h}
\end{equation}
{Now, observe that we may fix $\delta>0$ such that for $n$ sufficiently large:
\begin{equation}
\label{eq:delta}\sum_{i=1}^n  \frac{1}{S^n_{i}} \mathbbm{1}_{S^n_{i}\leq \delta \sqrt{n}} \leq \e \sqrt{n}.
\end{equation}
Indeed, we have
\[
\frac{1}{\sqrt{n}} \sum_{i=1}^n  \frac{1}{S^n_{i}} \mathbbm{1}_{S^n_{i}\leq \delta \sqrt{n}}=\int_{0}^{1}  \frac{\sqrt{n}}{S^{n}_{nt}} \mathbbm{1}_{1/\delta \leq   \frac{\sqrt{n}}{S^{n}_{nt}}} \mathrm{d}t
\]
and \eqref{eq:delta} follows from  \ref{hyp:conv1surf} by using the fact that convergence in $\mathrm{L}^{1}$ implies uniform integrability.}
 
Then, by the definition of $\dc^{n}$, write for every $\delta>0$ and $u,v \in \V_{n}$:
\[  \frac{\dc^n(u,v)}{\sqrt{n}} \leq \sum_{i=1}^n  \frac{1}{\sqrt{n} S^n_{i}} \mathbbm{1}_{S^n_{i}\leq \delta \sqrt{n}}+ \frac{\Delta^{n}(u,v)}{n \delta} \leq \varepsilon+ \frac{\Delta^{n}(u,v)}{n \delta} .\]
The estimate \eqref{16/01/11h} then follows from \eqref{eq:stepone}.
\end{proof}

We now turn to the proof of Proposition \ref{tightness time estimate}. {Recall that for $n\in \N$, $M_{n}\coloneqq\max_{0\leq i \leq n} S^n_i$.} Let $M>0$ be such that $M_{n} \leq M \sqrt{n}$ for all $n \geq 1$ {(this is possible by assumption)}. {For $k,n\in \N$ let $\delta_{k,n}\coloneqq 4 \lceil Mn/\sqrt{k} \rceil$.} Observe that $\delta_{k,n}>M_{n}$ for $k,n$ sufficiently large with $k \leq n^{1/3}$.

\begin{lemma} \label{OneStepTightness}
For every $k,n \geq 1$ large enough with $k \leq n^{1/3}$, for every {$ 0 \leq i \leq n/ \delta_{k,n}$}, for every  $v_{0} \in \V_{n}$ with {$\birth_{n}(v_{0}) > (i+1) \delta_{k,n}$}, for every $K \geq 1$ and { $v_{1}\neq \ldots \neq v_{K} \in \V_{n}$ such that  $i \delta_{k,n} < \birth_{n}(v_{j}) \leq (i+1) \delta_{k,n}  $}, we have
\[
\proba{\textrm{the tree of } \F^{n}_{ (i-1)\delta_{k,n} } \textrm{ containing } v_0 \textrm{ does not contain any vertex among } v_{1}, \ldots,v_{K}} \leq \frac{3 M \sqrt{k}}{4 K}.
\]
\end{lemma}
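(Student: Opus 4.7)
The plan is to analyse the $\delta_{k,n}$ reverse-time steps making up the interval $I=((i-1)\delta_{k,n},i\delta_{k,n}]$. Two ingredients are crucial. First, a purely deterministic estimate: since $\|S^n\|_\infty\leq M\sqrt n$ and $\delta_{k,n}=4\lceil Mn/\sqrt k\rceil$, the hypothesis $k\leq n^{1/3}$ gives $\delta_{k,n}\geq 4Mn/\sqrt k\gg M\sqrt n$, so the number of $+1$-steps in $I$ is at least $\frac{\delta_{k,n}-2M\sqrt n}{2}\geq \frac{3Mn}{2\sqrt k}$, and since each contributes at least $2/(M^2 n)$ to $1/\binom{S^n_l}{2}$,
\[
\sigma \;:=\; \sum_{l\in I,\ \X_l^n=1}\frac{1}{\binom{S^n_l}{2}} \;\geq\;\frac{3}{M\sqrt k}.
\]

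Second, a pairwise application of Lemma~\ref{loi du temps de coalescence}: for each $j\in\{1,\dots,K\}$, since $\birth_n(v_0)\wedge \birth_n(v_j)=\birth_n(v_j)\geq i\delta_{k,n}+1$, the probability that $v_j$ has coalesced with $v_0$ by time $(i-1)\delta_{k,n}$ is at least
\[
1\;-\!\!\prod_{l\in I,\ \X_l^n=1}\!\!\bigl(1-\tfrac{1}{\binom{S^n_l}{2}}\bigr)\;\geq\;1-e^{-\sigma}\;\geq\;\tfrac{3}{2M\sqrt k}
\]
for $k$ large enough, using $1-e^{-x}\geq x/2$ for $x\in[0,1]$. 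Writing $X$ for the number of $v_j$'s contained in the tree of $v_0$ in $\F^n_{(i-1)\delta_{k,n}}$, one obtains $\E[X]\geq \tfrac{3K}{2M\sqrt k}$.

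The heart of the argument, and the main obstacle, is to upgrade this expected-value estimate to the probability bound $\P(X=0)\leq \tfrac{3M\sqrt k}{4K}$, a bound of order $1/\E[X]$. A bare Markov bound on $K-X$ only gives $\P(X=0)\leq 1-\E[X]/K$, which is close to $1$. I would instead use a second-moment argument, the key ingredient being an upper bound on the joint coalescence probability $\P(v_j,v_{j'}\in T(v_0))$ for distinct $j,j'$. This is obtained by tracking the partition-valued process on $\{v_0,v_j,v_{j'}\}$ induced by the $+1$-steps of $I$: the 3-block partition moves to a 2-block one at step $l$ with probability $3/\binom{S^n_l}{2}$, and then from 2 to 1 with probability $1/\binom{S^n_l}{2}$, giving $\P(v_j,v_{j'}\in T(v_0))\lesssim \sigma^2 \lesssim \P(v_j\in T(v_0))^2$ up to an absolute constant. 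Combined with $\E[X]^2 \asymp K^2\sigma^2$, this yields $\mathrm{Var}(X)\lesssim \E[X]$ (up to lower order terms), and Chebyshev gives $\P(X=0)\leq \mathrm{Var}(X)/\E[X]^2\lesssim 1/\E[X]\lesssim \sqrt k/K$. The main obstacles are tracking the numerical constants carefully in the three-particle computation so as to hit exactly $\tfrac{3M\sqrt k}{4K}$, and handling the minor possibility that some $v_j$'s have already coalesced during the preceding ``birth interval'' $(i\delta_{k,n},(i+1)\delta_{k,n}]$ (which only makes failure less likely and hence can be safely absorbed into the estimate).
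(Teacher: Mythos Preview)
Your first two ingredients (the lower bound on $\sigma$ and on $\E[X]$) are correct, but the second-moment step has a genuine gap. The claim $\mathrm{Var}(X)\lesssim\E[X]$ fails: in the three-particle coalescent a direct computation gives, for small $\sigma$,
\[
q:=\P\bigl(v_j,v_{j'}\in T(v_0)\bigr)\sim\tfrac32\,\sigma^2
\]
(there are three choices for which pair makes the first of the two required merges), whereas $p^2\sim\sigma^2$. Hence $q-p^2\sim\tfrac12 p^2$, and the off-diagonal contribution to the variance is $K(K-1)(q-p^2)\sim\tfrac12\,\E[X]^2$, \emph{not} $O(\E[X])$. Chebyshev then only yields $\P(X=0)\lesssim 1/\E[X]+\tfrac12$, a constant bound that does not decay with $K$. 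In particular, in the regime $K\asymp M\sqrt k$ actually used downstream, $\E[X]$ is only of bounded order and this bound is not even below~$1$. So the route ``$q\lesssim p^2\Rightarrow\mathrm{Var}(X)\lesssim\E[X]$'' cannot produce the $O(\sqrt k/K)$ conclusion.

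The paper's argument is structurally different and avoids moments of $X$ altogether. Let $N^n_t$ denote the number of trees of $\F^n_t$ containing some vertex of $\{v_0,\dots,v_K\}$ (with not-yet-born vertices counted as singletons). Since $\birth_n(v_0)>\birth_n(v_j)$ for every $j$, $v_0$ is the one among $v_0,\dots,v_K$ with the smallest chance of being in a singleton block, so
\[
\P(v_0\text{ alone at time }(i-1)\delta_{k,n})\;\leq\;\frac{1}{K+1}\,\E\!\left[N^n_{(i-1)\delta_{k,n}}\right].
\]
The expectation of $N^n_t$ is then controlled by a one-step recursion: from
\[
\E[N^n_{t-1}]\leq\E[N^n_t]-\1_{\X^n_t=1}\,\frac{\E[N^n_t]\bigl(\E[N^n_t]-1\bigr)}{M_n^2}
\]
one telescopes the increments of $1/(\E[N^n_t]-1)$ over $t\in I$ and obtains $\E[N^n_{(i-1)\delta_{k,n}}]\leq 1+2M_n^2/(\delta_{k,n}-M_n-1)$, which is exactly the $O(1)$ constant needed to land on $\tfrac{3M\sqrt k}{4K}$. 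The idea you are missing is that bounding the \emph{expected number of blocks} directly, combined with the monotonicity that singles out $v_0$ as least likely to be isolated, gives an $O(1/K)$ bound with no correlation analysis at all.
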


\begin{proof}
For every $0\leq t \leq n$, we define $N_t^n$ by induction as follows: $N_n^n=K+1$, and then for $1 \leq t \leq n$ we set $N_{t-1}^n=N_t^n-1$ if $\X^n_t=1$ and two trees of $\F^n_t$ which contain a vertex in $\{v_{0},\dots, v_{K}\}$ coalesce at time $t-1$ and  $N_{t-1}^n=N_t^n$ otherwise. In words, $N_t^n$ represents the number of trees in $\F^n_t$ which contain a vertex in $ \{v_{0}, \ldots,v_{K}\}$, taking also into account vertices which are not born yet.

\emph{Step 1.} We first show that
\begin{eqnarray}
\label{eq:step1Delta}
&&\proba{\textrm{the tree of } \F^{n}_{ (i-1)\delta_{k,n} } \textrm{ containing } v_0 \textrm{ does not contain any vertex among } v_{1}, \ldots,v_{K}} \notag \\
& & \qquad \qquad \qquad \qquad \qquad \qquad \qquad\qquad\qquad\qquad\qquad\qquad  \leq \frac{1}{K+1} \esp{N^{n}_{(i-1) \delta_{k,n}  }}.
\end{eqnarray}
To this end, we say that $u\in \{v_{0},\ldots,v_{k}\}$ stays alone at time $t$ if either $t>b_{n}(u)$, or if the tree of $\F^n_t$ which contains $u$ does not contain any other vertex  of $\{v_{0},\dots,v_{k}\} \backslash \{u\} $. Plainly, by definition $N^n_t$ is at least equal to the number of vertices which stay alone at time $t$. Moreover, since $\birth_{n}(v_{0})>\birth_{n}(v_i)$ for $1 \leq i \leq K$,  $v_{0}$ is the vertex with the less chance to stay alone at time $t$ among $ \{v_{0}, \ldots,v_{K}\}$. Hence
\[ \frac{1}{K+1}\esp{N^n_t}\geq \frac{1}{K+1}\sum_{i=0}^{K+1} \proba{v_{i} \text{ stays alone at time } t}\geq  \proba{v_0 \text{ stays alone at time }t}.\]
The bound \eqref{eq:step1Delta} follows by noting that if $v_{0}$ stays alone at time $t$ then the tree of $\F^{n}_{t}$ containing $v_{0}$ does not contain any vertex among $v_{1}, \ldots,v_{K}$, with $t= (i-1) \delta_{k,n} $.

\emph{Step 2.} We next show that
\begin{equation}
\label{eq:step2Delta}  \esp{N^{n}_{ (i-1) \delta_{k,n}  }} \leq 1+\frac{2(M_{n})^2}{\delta_{k,n}-M_{n}-1}
\end{equation}
for $k,n \geq 1$ sufficiently large with $k \leq n^{1/3}$.
For every $ (i-1) \delta_{k,n}   {<} t {\leq} i \delta_{k,n} $, depending on whether two trees which contain a vertex in $\{v_0,\dots, v_{k}\}$ coalesce or not at time $t-1$ in Algorithm \ref{algo2}, we get
\[
 \E[N_{t-1}^n]  = \E[N_{t}^n]-\1_{\Xb^n_t=1} \frac{\E[N_{t}^n(N_{t}^n-1)]}{S^n_t(S^n_t-1)}  \leq \E[N_{t}^n]-\1_{\Xb^n_t=1} \frac{\E[N_{t}^n]\E[ N_{t}^n-1]}{(M_{n})^2}.
\]
Then, for every  $ (i-1) \delta_{k,n}    {<}  t  {\leq}  i \delta_{k,n} $, write
\[
\frac{1}{\E[N_{t-1}^n]-1}-\frac{1}{\E[N_{t}^n]-1} = \frac{\E[N_{t}^n]-\E[N_{t-1}^n]}{(\E[N_{t-1}^n]-1)(\E[N_{t}^n]-1)} \geq  
\frac{\1_{\Xb^n_t=1}\E[N_{t}^n]\E[ N_{t}^n-1]/(M_{n})^2}{(\E[N_{t-1}^n]-1)(\E[N_{t}^n]-1)} \geq \frac{\1_{\Xb^n_t=1}}{(M_{n})^2}.
\]
As a consequence, for $k,n \geq 1$,
\[
\frac{1}{\E\left [N_{(i-1) \delta_{k,n} }^n \right ]-1} \geq  \frac{1}{\E\left [N^n_{ i \delta_{k,n} } \right ]-1 }+\sum_{j=(i-1) \delta_{k,n}+1}^{ i \delta_{k,n} } \frac{\1_{\Xb^{n}_j=1}}{(M_{n})^2} \geq  \frac{S^{n}_{ i \delta_{k,n} }-S^{n}_{ (i-1 )\delta_{k,n} }+ i \delta_{k,n} - (i-1) \delta_{k,n} }{2(M_{n})^2}
\]
and \eqref{eq:step2Delta} follows $k,n \geq 1$ sufficiently large with $k \leq n^{1/3}$ (so that $\delta_{k,n}> M_{n}$).

\emph{Step 3.}  To complete the proof of Lemma \ref{OneStepTightness}, it just remains to note that for $k,n \geq 1$ sufficiently large with  $k\leq n^{1/3}$ we have $\delta_{k,n}-M_n-1\geq 4Mn/\sqrt{k}-M_n-1\geq 3 Mn/\sqrt{k}$ so that
\[
 \frac{1}{K+1} \left( 1+\frac{2(M_{n})^2}{\delta_{k,n}-M_{n}-1} \right) \leq  \frac{1}{K+1} \left( 1+\frac{2 M^{2} n}{3Mn/\sqrt{k}} \right)  \leq \frac{M\sqrt{k}}{K+1} \left( \frac{1}{M\sqrt{k}}+ \frac{2}{3} \right)\leq \frac{3 M\sqrt{k}}{4K}.
\]
This completes the proof.
\end{proof}

\begin{proof}[Proof of Proposition \ref{tightness time estimate}]
{Recall that  $ M_{n} \leq  M \sqrt{n}$ for every $n \geq 1$ and that  $\delta_{k,n}= 4 \lceil  M n/\sqrt{k} \rceil$.}

 To simplify notation, set $\birth_n(V_j)=\birth_{n}^{j}$ for $1 \leq j \leq n$ and $\birth_{n}=\birth_{n}(V)$ and define the event $E$ by
{ \[
E= \left\{\forall 0\leq i \leq \lfloor n/\delta_{k,n}\rfloor-1, \#  \{\birth_{n}^{j} : 1\leq j \leq k \textrm{ and } i\delta_{k,n} < \birth_{n}^{j}  \leq (i+1)\delta_{k,n} \} >  M \sqrt{k}\right\}.
\]}

\emph{Step 1.} We first check that
\begin{equation}
\label{eq:check1}
\proba{E^{c}}= o \left(  \frac{1}{ k^{4}}\right).
\end{equation}
To this end, first note that for every $0\leq i \leq \lfloor n/\delta_{k,n}\rfloor-1$,
\[ \sum_{i\delta_{k,n} <t\leq (i+1) \delta_{k,n}} \1_{\Xb^{n}_t=-1}= (\delta_{k,n}-S^{n}_{(i+1)\delta_{k,n}}+S^{n}_{i\delta_{k,n}})/2  \geq \delta_{k,n}/2-M \sqrt{n}/2 \geq \frac{3 M n}{2 \sqrt{k}}.  \]
for $k,n$ sufficiently large with $k \leq n^{1/3}$.
Since there are at most $n$ vertices in $\T^n$, if follows that we have
$\proba{i\delta_{k,n} < \birth_{n}  \leq (i+1) \delta_{k,n}} \geq {3M}/({2\sqrt{k}})-O(1/\sqrt{n})$.
Next,  if $\mathsf{Bin}(k,p)$ denotes a Binomial random variable with parameter $(k,p)$, by Bennett's inequality (Proposition \ref{prop:bennett}) we have for $0 <u < pk$:
\[ \proba{\mathsf{Bin}(k,p)<u } \leq \exp \left( - pk g \left( 1- \frac{u}{pk} \right) \right).\]
Since $\mathsf{Bin}(k,p)$ is stochastically increasing in $p$, it follows that
\[
 \proba{\#\{ 1 \leq j \leq k: \birth_{n}^{j} \in (i\delta_{k,n} ,(i+1)\delta_{k,n}]\} \leq {4}M \sqrt{k} }\leq \exp \left( -   {  {6} M \sqrt{k}} g(1/3) \right) \leq \exp \left( - \frac{\sqrt{k}}{100}\right),
 \]
 since $M$ can be chosen large.
So, by a union bound,
\[\proba{ \exists 0\leq i \leq \lfloor n/\delta_{k,n}\rfloor-1,\#\{ 1 \leq j \leq k: \birth_{n}^{j} \in (i\delta_{k,n} ,(i+1)\delta_{k,n}]\} \leq   {4} M \sqrt{k} }   \leq
 \frac{n}{\delta_{k,n}}  e^{-\sqrt{k}/100} \leq  \frac{\sqrt{k}}{4M}e^{-\sqrt{k}/100}, \]
 which is $o(k^{-4})$. To establish \eqref{eq:check1} it thus remains to check that
 \[
 {\proba{ \exists 1 \leq  i_{1} < i_{2} < i_{3} <  i_{4} \leq k : \birth_{n}^{i_{1}}= \birth_{n}^{i_{2}}=\birth_{n}^{i_{3}} =\birth_{n}^{i_{4}}}= o \left(  \frac{1}{k^{4}}\right).}
 \]  
 To this end, using the fact that there are at least $n/3$ vertices in $\T^{n}$ for $n$ sufficiently large, a union bound yields {$\proba{ \exists 1 \leq  i_{1} < i_{2} < i_{3} <  i_{4} \leq k : \birth_{n}^{i_{1}}= \birth_{n}^{i_{2}}=\birth_{n}^{i_{3}} =\birth_{n}^{i_{4}}} \leq  27 k^{4}/n^{3} =o(1/k ^{4})$ since $k \leq n^{1/3}$}.

\emph{Step 2.} Next we need some more notation. Set
\[
\mathbb{V}_{n}(i)= \{V_{j} : 1 \leq j \leq k \textrm{ and } i \delta_{k,n} {<} \birth_{n}^{j} {\leq} (i+1) \delta_{k,n} \}.
\]
Observe that under $E$ we have $\#\mathbb{V}_{n}(i) \geq M \sqrt{k}$ for every $0 \leq i \leq \lfloor n/ \delta_{k,n} \rfloor -1$.   Setting $p_{k}={\lfloor 100 \ln(k)\rfloor}$, we show that
\begin{equation}
\label{eq:tocheck}\proba{\Delta^n(V,\{V_1,\dots, V_{k}\})> 2p_{k} \delta_{k,n} \, \big |  \, E} = o(k^{-4}),
\end{equation}
and the desired result will follow

\emph{First case: $\birth_{n}$ is small.} First, if $\birth_{n} \leq 2p_{k}  \delta_{k,n}$, under $E$ there exists $1 \leq j \leq k$ such that $\birth^{j}_{n} \leq 2 \delta_{k,n}$, and then $\Delta^n(V,\{V_1,\dots, V_{k}\}) \leq \Delta^{n}(V,V_{j}) \leq 2 p_{k}\delta_{k,n}$. Thus
\[
\proba{\Delta^n(V,\{V_1,\dots, V_{k}\})> 2p_{k} \delta_{k,n} \, \big |  \, E, \birth_{n} \leq 2 p_{k} \delta_{k,n}} = 0
\]

\emph{Second case. $\birth_{n}$ is large.}  
For $i \geq 1$, let $A_{i}$ be the event:
\begin{center}
``the tree of $\F^n _{ {(i-1)} \delta_{k,n} }$ containing $V$ does not contain any vertex of $\mathbb{V}_{n}(i)$.''
\end{center}
By definition of Algorithm \ref{algo2}, for $i_{0} \geq 2 p_{k}$, conditionally given  $E$ and $i_{0} \delta _{k,n} < \birth_{n} \leq (i_{0}+1) \delta_{k,n}$, the events $(A_{i})_{2 \leq i \leq i_{0}-1, {i \text{ even}}}$ are independent and by Lemma \ref{OneStepTightness} (applied with $K= \lceil M \sqrt{k} \rceil$) their probability is at most $3/4$.  As a consequence, for every $i_{0} \geq 2 p_{k}$:
\[
\proba{ \bigcap_{i=i_{0}-p_{k}-1}^{i_{0}-1} A_{i} \, \middle | \,  E, i_{0} \delta _{k,n} < \birth_{n}  \leq (i_{0}+1) \delta_{k,n}} \leq  (3/4)^{{(p_{k}-1)/2}}.
\]
But then observe that when $i_{0} \delta _{k,n} < \birth_{n} \leq (i_{0}+1) \delta_{k,n}$, if for $i_{0}-p_{k}-1 \leq i \leq i_{0}-1$,  if the tree of $\F^n _{ i \delta_{k,n} }$ containing $V$  contains a vertex of $\mathbb{V}_{n}(i)$, say $V_{j}$, then
\[
\Delta^{n}(V,V_{j}) \leq  \frac{1}{2}  \left( (i_{0}+1) \delta_{k,n}+(i+1) \delta_{k,n}- 2 (i-1) \delta _{k,n} \right) \leq  \left(  \frac{i_{0}-i}{2}  +3 \right)\delta _{k,n}+1 \leq  2 p_{k} \delta_{k,n}
\]
for $k,n$ sufficiently large. We conclude from the previous discussion that
\[
\proba{\Delta^n(V,\{V_1,\dots, V_{k}\})> 2p_{k} \delta_{k,n} \, \middle |  \, E, \birth_{n} > 2 p_{k} \delta_{k,n}} = o  \left( (3/4)^{{(p_{k}-1)/2}} \right)=o(k^{-4}),
\]
where the last equality comes from our choice of $p_{k}$. This completes the proof.
\end{proof}

\subsection{Properties of $\mathcal{T}(f)$}

Here we study some properties of $\mathcal{T}(f)$ by establishing Theorem \ref{thm:properties}. 

\begin{proof}[Proof of Theorem \ref{thm:properties} (1)]
The fact that the mass measure on $\mathcal{T}(f)$ has full support was already mentioned in Section \ref{sec:deflim} as a consequence of Lemma \ref{C.1}, taking into account \eqref{eq:GP} and \eqref{eq:GH}.  The fact that it gives full support to the leaves follows from the property the coalescence time of two particles of the continuous-time coalescent is strictly smaller than the birth times, combined with \eqref{cv coalescent} and \eqref{eq:cvdn}: this implies that for two vertices sampled uniformly at random in $\mathcal{T}(f)$, one of them is not a descendant of the other.
\end{proof}

\begin{proof}[Proof of Theorem \ref{thm:properties} (3)]
Let us first assume that $\int_0 \frac{1}{f^2}<\infty$. To show that the degree of the root of $\T(f)$ is a.s. infinite, it suffices to show that a.s. there exist infinitely many clusters which do not coalesce with each other.
First, observe that since $\int_0 \frac{1}{f^2} <\infty$, we can build a decreasing sequence $(\e_k)_{k\ge 0}$ of positive numbers such that
\[
\int_{\e_{k+1}}^{\e_k} \frac{1}{f^2} \le \frac{2^{-k}}{\binom{k}{2}}.
\]
Next, since $(B_i)_{i\ge 1}$ are i.i.d. uniform random variables, we know that a.s. there exists a subsequence $(B_{i_k})_{k\ge 1}$ such that for all $k\ge 1$, we have $B_{i_k} \in [\e_{k+1},\e_k]$. But then, conditionally on the $B_{i_k}$'s, for all $k_0\ge 1$, the probability that none of the clusters containing a $B_{i_k}$ for $k\ge k_0$ coalesce with each other is
\[
\prod_{k\ge k_0} \exp \left(
-\int_{B_{i_{k+1}}}^{B_{i_k}} \frac{\binom{k}{2}}{f(t)^2} \mathrm{d}t
\right)\ge \prod_{k\ge k_0} \exp \left(
-\int_{\e_{k+1}}^{\e_k} \frac{\binom{k}{2}}{f(t)^2} \mathrm{d}t
\right)
\ge \exp \left(
-\sum_{k\ge k_0} 2^{-k}
\right)
\mathop{\longrightarrow}\limits_{k_0 \to \infty}
1.
\]
Thus, with probability one there exist infinitely many clusters which do not coalesce with each other. Conversely, when $\int_0 \frac{1}{f^2} = \infty$, the probability that two clusters which have not merged yet at time $t>0$ do not coalesce until time zero is 
$
\exp \left(-\int_0^t \frac{1}{f(t)^2} \mathrm{d}t\right) =0,
$ which proves that the root has a.s. degree $1$. For the other branchpoints, the fact that they have degree two is due to the fact that a.s. the coalescences happen at distinct times and thus at distinct heights.
\end{proof}

\subsection{Application: the Brownian CRT}
\label{ssec:CRT}

Here we establish Corollary \ref{cor:CRT}.

\begin{proof}[Proof of Corollary \ref{cor:CRT}]
The idea is to apply Theorem \ref{thm:critique} with a random sequence $(\Xb^{2n+1})_{n \geq 1}$ chosen in such a way that  {$(S^{2n+1}_{k})_{0 \leq k \leq 2n+1}$} has the law of a simple random walk started at $1$ and conditioned to hit $0$ for the first time at time $2n+1$.

First, by \cite[Theorem 2]{BBKK23+}, as a random compact metric space, $\T_{2n+1}$ has the same law as a uniform plane tree with $n+1$ vertices, so that $\frac{1}{\sqrt{2n+1}} \cdot \T_{2n+1}$ converges in distribution to the Brownian CRT \cite{Ald93}.

{Second, we shall show that  $\frac{1}{\sqrt{2n+1}} \cdot \T_{2n+1}$ converges in distribution to $ \mathcal{T}(\mathbbm{e})$, which will imply that $ \mathcal{T}(\mathbbm{e})$ has the law of the Brownian CRT. To this end,  it is standard (see e.g.~\cite{Kai76}) that the convergence
\begin{equation}
\label{eq:limexc} \left( \frac{S^{2n+1}_{(2n+1)t} }{\sqrt{2n+1}}  : 0 \leq t \leq 1 \right) \cvloi \mathbbm{e}
\end{equation}
holds in distribution for the $J_1$-Skorokhod topology and also for the uniform convergence since the limit is continuous, where $\mathbbm{e}$ is the Brownian excursion.}

{ By Skorokhod's representation theorem, we may assume that the convergence \eqref{eq:limexc} holds almost surely. We shall check that the conditions of application of Theorem \ref{thm:critique} are satisfied almost surely with the sequence $(\Xb^{2n+1})_{n \geq 1}$ and $f=\mathbbm{e}$.}

{
Notice that the conditions \ref{hyp:conv1surf2} and $\max_{1 \leq  k \leq 2n+1} S^{2n+1}_{k}= \mathcal{O}(\sqrt{n})$ are satisfied thanks to \eqref{eq:limexc} and to the continuity of $f$. We claim that \ref{hyp:conv1surf} follows if we check that
\begin{equation}
\label{eq:alphatight}\lim_{\delta\to 0} \limsup_{n\to \infty} \frac{1}{\sqrt{n}} \sum_{i=1}^n  \frac{1}{S^{2n+1}_{i}} \mathbbm{1}_{S^{2n+1}_{i}\leq \delta \sqrt{n}}=0.
\end{equation}
Indeed, we have
\[
\frac{1}{\sqrt{n}} \sum_{i=1}^n  \frac{1}{S^{2n+1}_{i}} \mathbbm{1}_{S^{2n+1}_{i}\leq \delta \sqrt{n}}=\int_{0}^{1}  \frac{\sqrt{n}}{S^{2n+1}_{nt}} \mathbbm{1}_{1/\delta \leq   \frac{\sqrt{n}}{S^{2n+1}_{nt}}} \mathrm{d}t,
\]
so \eqref{eq:alphatight} is equivalent to the uniform integrability of $ (\sqrt{2n+1}/S^{2n+1}_{ (2n+1)t })_{0 \leq t \leq 1}$ . Combined with \eqref{eq:limexc}, this implies convergence in $\mathrm{L}^{1}$ and thus   \ref{hyp:conv1surf}.}

It remains to check \eqref{eq:alphatight}.   To simplify notation, for $n \geq 1$ and $0 \leq \eta < \delta \leq 1$ set
\[
X_{n}(\delta)= \frac{1}{\sqrt{n}} \sum_{i=1}^n  \frac{1}{S^{2n+1}_{i}} \mathbbm{1}_{S^{2n+1}_{i}\leq \delta \sqrt{n}}, \qquad X(\eta,\delta)= \int_{0}^{1} \frac{1}{\mathbbm{e}(t)} \mathbbm{1}_{\eta \leq \mathbbm{e}(t) \leq \delta}\mathrm{d} t.
\]
It suffices to check that for every $\e>0$, for a well chosen coupling, almost surely  there exists $\delta>0$ such that  every $n \geq 1$ sufficiently large we have
$X_{n}(\delta) \leq \e$.

\emph{Step 1.} We show that
\begin{equation}
\label{eq:borne} \proba{X_{n}(\delta) \geq \e} \leq  \frac{C}{\e} \delta^{2}
\end{equation}
for all $n\ge 1$ {for a certain constant $C>0$.}
This is essentially \cite[Lemma 3.6]{CMY00}, but let us give a  different proof based on the local limit theorem which gives a slightly better upper bound.
To this end, denote by $(S_{i})_{i \geq 0}$ a simple random walk started from ${1}$.  By Kemperman's formula (see e.g.~\cite[Sec.~6.1]{Pit06}), we have {for $1 \leq i \leq 2n+1$}
\begin{equation}
\label{eq:probalocale}
\proba{S^{2n+1}_{i}=k}= \frac{2n+1}{\proba{S_{2n+1}=0}} \cdot \frac{k-1}{i} \proba{S_{i}=k} \cdot  \frac{k}{2n-i+1} \proba{S_{2n-i+1}=-k+1}.
\end{equation}
By the local limit theorem (see e.g.~\cite[Theorem 4.2.1]{IL71}), we get for all $n \ge 1$, for $1 \leq i \leq n$,
\[
\proba{S^{2n+1}_{i}=k} \leq C n^{3/2} \cdot  \frac{k}{i} \cdot \frac{1}{\sqrt{i}} \cdot  \frac{k}{n} \cdot \frac{1}{\sqrt{n}}= C  \frac{k^{2}}{i^{3/2}},
\]
where $C$ is a constant that may change from line to line. It follows that 
\[\esp{ \frac{1}{S^{2n+1}_{i}} \mathbbm{1}_{S^{2n+1}_{i}\leq \delta \sqrt{n}} } \leq \sum_{k=1}^{\delta \sqrt{n}} C \frac{k}{i ^{3/2}} \leq C \frac{\delta^{2} n}{i ^{3/2}}.
\]
Thus
\[
\proba{ \sum_{i=1}^n  \frac{1}{S^{2n+1}_{i}} \mathbbm{1}_{S^{2n+1}_{i}\leq \delta \sqrt{n}} \geq \e \sqrt{n}} \leq \frac{1}{\e \sqrt{n}} \sum_{i=1}^{n} \esp{ \frac{1}{S^{2n+1}_{i}} \mathbbm{1}_{S^{2n+1}_{i}\leq \delta \sqrt{n}} } \leq  \frac{C}{\e} {\delta^{2}}{\sqrt{n}} \sum_{i=1}^{n} \frac{1}{i ^{3/2}}  \leq  \frac{C}{\varepsilon} \delta^{2}
\]
and \eqref{eq:borne} follows.

\emph{Step 2.} We show that
\begin{equation}
\label{eq:cvloi} X_{n}(\delta) \cvloi X(0,\delta)
\end{equation}
jointy with \eqref{eq:limexc}. Observe that by continuity, for $\eta \in (0,\delta)$,
\[
X_{n}(\delta)-X_{n}(\eta)= \frac{1}{\sqrt{n}} \sum_{i=1}^n  \frac{1}{S^{2n+1}_{i}} \mathbbm{1}_{ \eta \sqrt{n} \leq S^{2n+1}_{i}\leq \delta \sqrt{n}} \cvloi  X(\eta,\delta)
\]
jointly with \eqref{eq:limexc}. Then we have:
 \begin{enumerate}[noitemsep,nolistsep]
\item[--] $X_{n}(\eta) \rightarrow 0$ in probability uniformly in $n$ as $\eta \rightarrow 0$ by \eqref{eq:borne};
\item[--] $X(\eta,\delta) \rightarrow X(0,\delta)$ as $\eta \rightarrow 0$;
\item[--] for every $\eta \in (0,\delta) $ we have $X_{n}(\delta)-X_{n}(\eta) \rightarrow X(\eta,\delta)$ in distribution as $n \rightarrow \infty$. 
\end{enumerate}
This entails \eqref{eq:cvloi}.

\emph{Step 3.} By Skorokhod's representation theorem, we may assume that the convergences \eqref{eq:limexc} and \eqref{eq:cvloi} both hold almost surely. Choose $\delta>0$ such that  $X(0,\delta) \leq \varepsilon$. Then for every $n$ sufficiently large we have $X_{n}(\delta) \leq 2 \varepsilon$. This completes the proof.
\end{proof}

\begin{remark}
By combining Theorem \ref{thm:heightalpha} and Corollary \ref{cor:CRT} we get the following well-known distributional identity (see e.g.~\cite{CMY00}):
\[
\frac{1}{2} \int_{0}^{1} \frac{\mathrm{d} t}{\mathbbm{e}(t)} \quad \mathop{=}^{(d)} \quad \sup \mathbbm{e}.
\]
\end{remark}

\bibliographystyle{abbrv}

\bibliography{bibli}

\begin{thebibliography}{10}

\bibitem{ADH13}
R.~Abraham, J.-F. Delmas, and P.~Hoscheit.
\newblock {A note on the Gromov-Hausdorff-Prokhorov distance between (locally)
  compact metric measure spaces}.
\newblock {\em Electronic Journal of Probability}, 18:1 -- 21, 2013.

\bibitem{AB15}
L.~Addario-Berry.
\newblock Partition functions of discrete coalescents: from {C}ayley's formula
  to {F}rieze's $\zeta$ (3) limit theorem.
\newblock In {\em XI Symposium on Probability and Stochastic Processes}, pages
  1--45. Springer, 2015.

\bibitem{ABE18}
L.~Addario-Berry and L.~Eslava.
\newblock High degrees in random recursive trees.
\newblock {\em Random Structures \& Algorithms}, 52(4):560--575, 2018.

\bibitem{Ald91}
D.~Aldous.
\newblock {The Continuum Random Tree. I}.
\newblock {\em The Annals of Probability}, 19(1):1 -- 28, 1991.

\bibitem{Ald93}
D.~Aldous.
\newblock {The Continuum Random Tree. III}.
\newblock {\em The Annals of Probability}, pages 248--289, 1993.

\bibitem{A98}
D.~J. Aldous.
\newblock Brownian excursion conditioned on its local time.
\newblock {\em Electron. Commun. Probab.}, 3:79--90, 1998.

\bibitem{ADGO17}
O.~Amini, L.~Devroye, S.~Griffiths, and N.~Olver.
\newblock Explosion and linear transit times in infinite trees.
\newblock {\em Probability Theory and Related Fields}, 167:325--347, 2017.

\bibitem{BBKK23+}
E.~Bellin, A.~Blanc-Renaudie, E.~Kammerer, and I.~Kortchemski.
\newblock Uniform attachment with freezing.
\newblock {\em Preprint available on arxiv, arxiv:arXiv:2308.00493}, 2023.

\bibitem{Bla21}
A.~Blanc-Renaudie.
\newblock Limit of trees with fixed degree sequence.
\newblock {\em Preprint available on arxiv, arxiv:2110.03378}, 2021.

\bibitem{Bla22+}
A.~Blanc-Renaudie.
\newblock {Looptree, Fennec, and Snake of ICRT}.
\newblock {\em arXiv preprint arXiv:2203.10891}, 2022.

\bibitem{Bla22}
A.~Blanc-Renaudie.
\newblock Compactness and fractal dimensions of inhomogeneous continuum random
  trees.
\newblock {\em Probability Theory and Related Fields}, 185(3-4):961--991, 2023.

\bibitem{BBI01}
D.~Burago, Y.~Burago, and S.~Ivanov.
\newblock {\em A course in metric geometry}, volume~33 of {\em Grad. Stud.
  Math.}
\newblock Providence, RI: American Mathematical Society (AMS), 2001.

\bibitem{CMY00}
P.~Chassaing, J.-F. Marckert, and M.~Yor.
\newblock The height and width of simple trees.
\newblock In {\em Mathematics and Computer Science: Algorithms, Trees,
  Combinatorics and Probabilities}, pages 17--30. Springer, 2000.

\bibitem{CH17}
N.~Curien and B.~Haas.
\newblock Random trees constructed by aggregation.
\newblock In {\em Annales de l'Institut Fourier}, volume~67, pages 1963--2001,
  2017.

\bibitem{Dev87}
L.~Devroye.
\newblock Branching processes in the analysis of the heights of trees.
\newblock {\em Acta Informatica}, 24(3):277--298, 1987.

\bibitem{DR76}
J.~Doyle and R.~L. Rivest.
\newblock Linear expected time of a simple union-find algorithm.
\newblock {\em Inf. Process. Lett.}, 5(5):146--148, 1976.

\bibitem{DLG02}
T.~Duquesne and J.-F. Le~Gall.
\newblock {\em Random trees, L{\'e}vy processes and spatial branching
  processes}, volume 281.
\newblock Soci{\'e}t{\'e} math{\'e}matique de France Paris, France, 2002.

\bibitem{ET21}
G.~Elek and G.~Tardos.
\newblock {Convergence and Limits of Finite Trees}.
\newblock {\em Combinatorica}, 42(6):821--852, 2022.

\bibitem{Esl21}
L.~Eslava.
\newblock A non-increasing tree growth process for recursive trees and
  applications.
\newblock {\em Combinatorics, Probability and Computing}, 30(1):79--104, 2021.

\bibitem{Esl22}
L.~Eslava.
\newblock Depth of vertices with high degree in random recursive trees.
\newblock {\em ALEA. Latin American Journal of Probability \& Mathematical
  Statistics}, 19(1), 2022.

\bibitem{IL71}
I.~Ibragimov and J.~Linnik.
\newblock {\em Independent and stationary sequences of random variables}.
\newblock 1971.

\bibitem{Jan12}
S.~Janson.
\newblock Simply generated trees, conditioned galton―watson trees, random
  allocations and condensation.
\newblock {\em Discrete Mathematics \& Theoretical Computer Science},
  (Proceedings), 2012.

\bibitem{Jan21}
S.~Janson.
\newblock Tree limits and limits of random trees.
\newblock {\em Combinatorics, Probability and Computing}, 30(6):849--893, 2021.

\bibitem{JJS11}
S.~Janson, T.~Jonsson, and S.~{\"O}. Stef{\'a}nsson.
\newblock Random trees with superexponential branching weights.
\newblock {\em Journal of Physics A: Mathematical and Theoretical},
  44(48):485002, 2011.

\bibitem{JS11}
T.~Jonsson and S.~{\"O}. Stef{\'a}nsson.
\newblock Condensation in nongeneric trees.
\newblock {\em Journal of Statistical Physics}, 142:277--313, 2011.

\bibitem{Kai76}
W.~D. Kaigh.
\newblock An invariance principle for random walk conditioned by a late return
  to zero.
\newblock {\em The Annals of Probability}, pages 115--121, 1976.

\bibitem{KS78}
D.~E. Knuth and A.~Sch{\"o}nhage.
\newblock The expected linearity of a simple equivalence algorithm.
\newblock {\em Theoretical Computer Science}, 6(3):281--315, 1978.

\bibitem{Kor15}
I.~Kortchemski.
\newblock Limit theorems for conditioned non-generic galton--watson trees.
\newblock In {\em Annales de l'IHP Probabilit{\'e}s et statistiques},
  volume~51, pages 489--511, 2015.

\bibitem{KR19}
I.~Kortchemski and L.~Richier.
\newblock Condensation in critical cauchy bienaym{\'e}--galton--watson trees.
\newblock {\em The Annals of Applied Probability}, 29(3):1837--1877, 2019.

\bibitem{LGLJ98}
J.-F. Le~Gall and Y.~Le~Jan.
\newblock Branching processes in l{\'e}vy processes: the exploration process.
\newblock {\em The Annals of Probability}, 26(1):213--252, 1998.

\bibitem{Loh13}
W.~Loehr.
\newblock {Equivalence of Gromov-Prohorov- and Gromov's
  $\underline\Box_\lambda$-metric on the space of metric measure spaces}.
\newblock {\em Electronic Communications in Probability}, 18(none):1 -- 10,
  2013.

\bibitem{Pit06}
J.~Pitman.
\newblock {\em {Combinatorial stochastic processes: Ecole d'et{\'e} de
  probabilit{\'e}s de saint-flour xxxii-2002}}.
\newblock Springer, 2006.

\bibitem{Pit94}
B.~Pittel.
\newblock Note on the heights of random recursive trees and random m-ary search
  trees.
\newblock {\em Random Structures \& Algorithms}, 5(2):337--347, 1994.

\bibitem{Sen19}
D.~S{\'e}nizergues.
\newblock Random gluing of metric spaces.
\newblock {\em The Annals of Probability}, 47(6):3812--3865, 2019.

\bibitem{Tal05}
M.~Talagrand.
\newblock {\em The generic chaining: upper and lower bounds of stochastic
  processes}.
\newblock Springer Science \& Business Media, 2005.

\end{thebibliography}

\appendix

\section{Background on the GP, GH and GHP topologies}
\label{sec:background}
We give background on various topologies that we use, based on \cite{Bla21,Bla22}.
\subsection{The Gromov--Prokhorov (GP) topology} \label{GPdef}
A measured metric space is a triple $(X,d,\mu)$ such that $(X,d)$ is a Polish space and $\mu$ is a Borel probability measure on $X$. Two such spaces $(X,d,\mu)$, $(X',d',\mu')$ are called GP-isometry-equivalent if and only if there exists an isometry $f:\supp(X)\to \supp(X')$ such that if $f_\star \mu$ is the image of $\mu$ by $f$ then $f_\star \mu=\mu'$. Let $\mathbb{K}_{\GP}$ be the set of GP-equivalent classes of measured metric space. Given a measured metric space $(X,d,\mu)$, we write $[X,d,\mu]$ for the GP-isometry-equivalence class of $(X,d,\mu)$ and frequently use the notation $X$ for either $(X,d,\mu)$ or $[X,d,\mu]$.

We now recall the definition of the Prokhorov distance. Consider a metric space $(X,d)$. For every $A\subset X$ and $\e>0$ let $A^\e\coloneqq \{x\in X, d(x,A)<\e\}$ be the open $\varepsilon$-neighborhood of $A$. Then given two (Borel) probability measures $\mu$, $\nu$ on $X$, the Prokhorov distance between $\mu$ and $\nu$ is defined by 
\[d_P(\mu, \nu)\coloneqq \inf\{\text{ $\e>0$: $\mu(A)\leq \nu (A^\e)+\e$ and $\nu(A)\leq  \mu(A^\e)+\e$, for all Borel set $A\subset X$} \}.\]

The  Gromov--Prokhorov (for short GP) distance is an extension of the Prokhorov's distance: For every $(X,d,\mu),(X',d',\mu')\in \K_{\GP}$ the Gromov--Prokhorov distance between $X$ and $X'$ is defined by
\[ d_{\GP}((X,d,\mu),(X',d',\mu'))\coloneqq\inf_{S,\phi,\phi'} d_P(\phi_\star \mu, \phi'_\star\mu'),\]
where the infimum is taken over all metric spaces $S$ and isometric embeddings $\phi :X\to S$, $\phi' :X'\to S$. $d_{\GP}$ is indeed a distance on $\K_{\GP}$ and $(\K_{\GP},d_{\GP})$ is a Polish space (see e.g. \cite{ADH13}).

We use another convenient characterization of the GP topology using the convergence of distance matrices: For every measured metric space  $(X,d^X,\mu^X)$ let $(x_i^X)_{i\in \N}$ be a sequence of i.i.d. random variables of common distribution $\mu^X$ and let $M^X\coloneqq(d^X(x_i^X,x_j^X))_{i,j\in \N}$. We have the following result from \cite{Loh13},

\begin{lemma} \label{equivGP} Let $(X^n)_{n\in \N} \in \K_{\GP}^\N$ and let $X\in \K_{\GP}$ then $X^n\limit^{\GP}X$ as $n\to \infty$ if and only if $M^{X^n}$ converges in distribution toward $M^X$.
\end{lemma}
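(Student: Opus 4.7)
The plan is to prove the two implications separately. The direct implication is obtained by a coupling argument, while the converse combines Gromov's reconstruction theorem with a tightness statement in $\K_{\GP}$.

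For the direct implication, assume $d_{\GP}(X^n, X) \to 0$ and, for each $n$, pick a metric space $S_n$ and isometric embeddings $\phi_n : X^n \to S_n$, $\psi_n : X \to S_n$ realising $d_P(\phi_{n,\star}\mu^{X^n}, \psi_{n,\star}\mu^X) < \varepsilon_n$ with $\varepsilon_n \to 0$. By Strassen's coupling theorem, one can construct on a common probability space a pair $(U_n, V_n)$ with marginal laws $\phi_{n,\star}\mu^{X^n}$ and $\psi_{n,\star}\mu^X$ such that $\P(d_{S_n}(U_n, V_n) > \varepsilon_n) < \varepsilon_n$. Taking i.i.d.\ copies, one obtains sequences $(U_n^i)_{i \geq 1}$ and $(V_n^i)_{i \geq 1}$ whose distance matrices are equal in law to $M^{X^n}$ and $M^X$ respectively. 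Since the triangle inequality gives $|d_{S_n}(U_n^i, U_n^j) - d_{S_n}(V_n^i, V_n^j)| \leq d_{S_n}(U_n^i, V_n^i) + d_{S_n}(U_n^j, V_n^j)$, a union bound shows that every $k \times k$ upper-left block of these matrices becomes arbitrarily close in probability, yielding convergence of $M^{X^n}$ to $M^X$ in distribution for the product topology on $\R_+^{\N \times \N}$.

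For the converse, assume $M^{X^n} \to M^X$ in distribution. The key ingredient is Gromov's reconstruction theorem: the law of $M^Y$ determines the GP-class of $Y$ uniquely. The argument then runs as follows. First, show that the family $(X^n)$ is tight in $(\K_{\GP}, d_{\GP})$. Second, for any subsequence along which $X^{n_k}$ converges in GP to some limit $Y$, apply the already established direct implication to deduce $M^Y \stackrel{d}{=} M^X$. Third, conclude by Gromov's reconstruction that $Y$ and $X$ are GP-equivalent, so that every subsequential GP-limit equals $X$, hence $X^n \to X$.

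The main obstacle is establishing tightness of $(X^n)$ in $\K_{\GP}$ from distance matrix convergence alone. Intuitively one must rule out the possibility that $\mu^{X^n}$ spreads its mass over regions that no finite sample can detect. The approach would be to extract from the law of $M^{X^n}$ a uniform control on covering numbers: roughly, $k$ independent samples ought to form a net for a fraction $1 - \delta$ of the mass with probability close to one, uniformly in $n$, as soon as $k$ is large enough. Such an estimate can be read off the empirical distribution of entries $d^{X^n}(x^{X^n}_i, x^{X^n}_j)$, using that for fixed $r$ the probability that a uniform sample lies within distance $r$ of one of $x^{X^n}_1, \ldots, x^{X^n}_k$ is a functional of $M^{X^n}$. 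Translating this into the standard Gromov--Prokhorov tightness criterion requires the combinatorial care carried out in Löhr \cite{Loh13}, to which we refer for the technical details.
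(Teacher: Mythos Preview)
The paper does not prove this lemma at all: it is stated as a background fact with the attribution ``We have the following result from \cite{Loh13}'', and no argument is given. So there is no paper-proof to compare against.

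Your sketch is a reasonable outline of how the result is established. The direct implication via Strassen coupling is standard and correct. For the converse, your structure (tightness, subsequential limits, identification via Gromov's reconstruction theorem) is the right one, and you are honest that the tightness step is where the work lies and that you are ultimately deferring to \cite{Loh13} for it --- which is exactly what the paper does in the first place. One small comment: in your tightness discussion you speak of ``covering numbers'', which is more naturally a GH/GHP notion; the GP-tightness criterion (as in Gromov--Prokhorov, see e.g.\ Greven--Pfaffelhuber--Winter or \cite{Loh13}) is phrased in terms of the mass captured by balls around finitely many sampled points, which is closer to what you describe afterwards. But since both you and the paper end up pointing to \cite{Loh13}, there is nothing substantive to flag.
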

\subsection{The Gromov--Hausdorff (GH) topology} \label{GH}
Let $\K_{\GH}$ be the set of isometry-equivalent classes of compact metric space. For every metric space $(X,d)$, we write $[X,d]$ for the isometry-equivalent class of $(X,d)$, and frequently use the notation $X$ for either $(X,d)$ or $[X,d]$. 

 For every metric space $(X,d)$, the Hausdorff distance between $A,B\subset X$ is given by
\[d_H(A,B)\coloneqq \inf\{\e>0, A\subset B^\e, B\subset A^\e \}. \]
The Gromov--Hausdorff distance between $(X,d)$,$(X',d')\in \K_{\GH}$ is given by 
\[ d_{\GH}((X,d),(X',d'))\coloneqq\inf_{S,\phi,\phi'} \left (d_H(\phi(X), \phi'(X')) \right ),\]
where the infimum is taken over all metric spaces $S$ and isometric embeddings $\phi :X\to S$, $\phi' :X'\to S$. $d_{\GH}$ is indeed a distance on $\K_{\GH}$ and $(\K_{\GH},d_{\GH})$ is a Polish space (see e.g.  \cite{ADH13}).

\subsection{The Gromov--Hausdorff--Prokhorov (GHP) topology}
Two measured metric spaces $(X,d,\mu)$, $(X',d',\mu')$ are called GHP-isometry-equivalent if and only if there exists an isometry $f:X\to X'$ such that if $f_\star \mu$ is the image of $\mu$ by $f$ then $f_\star \mu=\mu'$.
Let $\K_{\GHP}\subset\K_{\GP}$ be the set of isometry-equivalent classes of compact measured metric space.

 The Gromov--Hausdorff--Prokhorov distance between $(X,d,\mu)$,$(X',d',\mu')\in \K_{\GHP}$ is given by 
\[ d_{\GHP}((X,d,\mu),(X',d',\mu'))\coloneqq\inf_{S,\phi,\phi'} \left ( d_P(\phi_\star \mu, \phi'_\star\mu')+d_H(\phi(X), \phi'(X')) \right ),\]
where the infimum is taken over all metric spaces $S$ and isometric embeddings $\phi :X\to S$, $\phi' :X'\to S$. $d_{\GHP}$ is indeed a distance on $\K_{\GHP}$ and $(\K_{\GHP},d_{\GHP})$ is a Polish space (see  \cite{ADH13}).

Note that random variables which are $\GHP$ measurable are also $\GH$ measurable. For every $[X,d,p]\in \K_{\GHP}$, let $[X,d]_{\GH}$ denote its natural projection on $\K_{\GH}$. Note that GHP convergence implies GH convergence of the projections on $\K_{\GH}$, then that the projection on $\K_{\GH}$ is a measurable function. The same statements hold for the GP topology. We will need the following statement.

\begin{lemma} \label{GP+GH=GHP}[Lemma 4 in \cite{Bla21}] Let $([X^n,d^n,p^n])_{n\in \N}$ and $[X,d,p]$ be GHP measurable random variables in $\K_{\GHP}$. Assume that almost surely $[X,d,p]$ has full support. Assume that $([X^n,d^n,p^n])_{n\in \N}$ converges weakly toward $[X,d,p]$ in a GP sense, and that $([X^n,d^n])_{n\in \N}$ converges weakly toward $[X,d]$ in a GH sense. Then $([X^n,d^n,p^n])_{n\in \N}$ converges weakly toward $[X,d,p]$ in a $\GHP$ sense.
\end{lemma}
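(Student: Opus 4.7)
The plan is to reduce to a deterministic statement via Skorokhod's representation theorem, applied jointly to the GP convergence of $(X^n,d^n,p^n)$ and the GH convergence of the projections. Working on a fixed realisation, the task becomes to exhibit, for each $n$, a common metric space $(S_n,\rho_n)$ with isometric embeddings $\phi_n\colon X^n \hookrightarrow S_n$ and $\psi_n\colon X \hookrightarrow S_n$ for which $d_H^{S_n}(\phi_n(X^n),\psi_n(X))$ and $d_P^{S_n}((\phi_n)_* p^n,(\psi_n)_* p)$ both vanish as $n\to\infty$; this directly yields GHP convergence by definition of $d_{\GHP}$.

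The candidate embeddings are those coming from the GH convergence, which already make the Hausdorff distance small, so it suffices to verify Prokhorov convergence in these same embeddings. The tool is the distance-matrix characterisation of GP (Lemma \ref{equivGP}): sampling i.i.d.~sequences $(Y_i^n)_{i\ge 1}$ from $p^n$ and $(Y_i)_{i\ge 1}$ from $p$, the matrices $(d^n(Y_i^n,Y_j^n))_{i,j}$ converge in distribution to $(d(Y_i,Y_j))_{i,j}$, and after another Skorokhod reduction we may assume this almost surely. Because $X$ is compact and $p$ has full support, Varadarajan's theorem gives that the empirical measures $\hat p_N = \frac{1}{N}\sum_{i=1}^N \delta_{Y_i}$ converge weakly to $p$ as $N\to\infty$; similarly, for each $n$, $\hat p_N^n = \frac{1}{N}\sum_{i=1}^N \delta_{Y_i^n}$ converges weakly to $p^n$. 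Plugging the samples into the GH embeddings, the distance-matrix convergence plus compactness lets us arrange $\rho_n(\phi_n(Y_i^n),\psi_n(Y_i))$ to be small uniformly in $i\le N$ for $n$ large, so that $(\phi_n)_*\hat p_N^n$ lies within small Prokhorov distance of $(\psi_n)_*\hat p_N$.

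Chaining the three approximations $(\phi_n)_* p^n \approx (\phi_n)_*\hat p_N^n$ (empirical approximation in $X^n$), $(\phi_n)_*\hat p_N^n \approx (\psi_n)_*\hat p_N$ (distance-matrix matching inside $S_n$), and $(\psi_n)_*\hat p_N \approx (\psi_n)_* p$ (empirical approximation in $X$), and letting $N$ grow with $n$ in a diagonal extraction, yields the desired Prokhorov bound. The main technical obstacle is ensuring uniformity in $n$ in the first of these three steps, since the rate at which $\hat p_N^n$ approaches $p^n$ depends a priori on $n$; this is exactly where GH convergence is crucially combined with full support, so that the GH closeness of $X^n$ to the compact limit $X$ transfers the Varadarajan approximation valid for $p$ into a quantitative approximation for the $p^n$'s uniformly in $n$. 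Without full support of $p$, one could imagine regions of $X$ outside $\supp(p)$ where the GH embedded copies of $X^n$ deposit non-negligible mass, producing Prokhorov discrepancies invisible to either the GP or the GH topology in isolation.
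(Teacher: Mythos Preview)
The paper does not supply a proof of this lemma; it is quoted verbatim from \cite{Bla21}. So there is no in-paper argument to compare against, and I assess your outline on its own terms. It has two genuine gaps.

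First, applying Skorokhod \emph{jointly} to the GP and GH convergences presupposes joint weak convergence of the pair $\big([X^n,d^n,p^n]_{\GP},\,[X^n,d^n]_{\GH}\big)$ in $\K_{\GP}\times\K_{\GH}$, which is not among the hypotheses; only the two marginals are assumed to converge. Tightness of the pair is free, but identifying its subsequential joint limits is exactly as hard as the lemma itself.

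Second, the assertion that ``the distance-matrix convergence plus compactness lets us arrange $\rho_n(\phi_n(Y_i^n),\psi_n(Y_i))$ to be small'' does not follow when $\phi_n,\psi_n$ are the embeddings realising the GH convergence: those are chosen with no reference to the samples. Take $X^n=X=S^1$ with uniform measure; any overlaying isometry realises $d_{\GH}=0$, yet the sample clouds, though isometric to one another, may differ by an arbitrary rotation inside $S_n$, so $\rho_n(\phi_n(Y_i^n),\psi_n(Y_i))$ is typically of order the diameter. One must instead \emph{construct} embeddings from the samples, and then the Hausdorff bound requires $\{Y_i^n\}_{i\le N}$ to be $\varepsilon$-dense in $X^n$. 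You rightly flag this as the main obstacle but do not resolve it; since $p^n$ need not have full support, density of its samples in $X^n$ is not automatic, and establishing it amounts to proving that the ``excess'' $X^n\setminus\supp p^n$ is Hausdorff-negligible in the limit---equivalently, that every GHP-subsequential limit has full support. That is the crux of the lemma, not a side technicality.

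A cleaner route (and presumably the one in \cite{Bla21}) is: GH convergence yields GHP tightness, since for probability measures the pre-compactness criterion of \cite{ADH13} reduces to control of diameters and covering numbers (cf.\ the proof of Proposition~\ref{C.1}). For any GHP-subsequential limit $(Y,e,q)$ one then has $[Y,e,q]_{\GP}=^{(d)}[X,d,p]_{\GP}$ and $[Y,e]_{\GH}=^{(d)}[X,d]_{\GH}$. Full support of $p$ upgrades the first to $[\supp q,e,q]_{\GHP}=^{(d)}[X,d,p]_{\GHP}$, whence $[\supp q,e]_{\GH}=^{(d)}[Y,e]_{\GH}$. Since $\supp q\subseteq Y$ a.s., any strictly monotone GH-invariant (e.g.\ $\varepsilon$-packing numbers, which strictly increase upon adjoining a point at distance $>\varepsilon$ from the rest) satisfies $P_\varepsilon(\supp q)\le P_\varepsilon(Y)$ a.s.\ together with equality in law, forcing $P_\varepsilon(\supp q)=P_\varepsilon(Y)$ a.s.\ for all rational $\varepsilon$ and hence $\supp q=Y$ a.s.\ (a compact metric space admits no isometric proper self-embedding). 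This identifies the limit and finishes the proof.
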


\section{The leaf-tightness criterion}
\label{sec:leaf}
In this section, $\mathbf X=((X^n,d^n,p^n))_{n\in \N}$ denotes  a sequence of random compact measured metric spaces GHP-measurable. For every $n\in \N$ let $(x_i^n)_{i\in \N}$ be a sequence of i.i.d. random variables of common distribution $p^n$, then let $M^n\coloneqq(d^n(x_i^n,x_j^n))_{i,j\in \N}$. We say that $\mathbf X$ is leaf-tight if and only if
\begin{equation} \lim_{\delta\to 0} \lim_{k\to \infty} \limsup_{n\to \infty} \proba{d_H(X^n,\{x^n_1,x^n_2,\ldots,x^n_k\})>\delta } = 0. \label{eq:leaf-tight} \end{equation}
Although the criterion is unrelated with leaves, its name ``leaf-tight'' stayed as the original name from Aldous \cite{Ald93} which first introduced this criterion to study random trees with random leaves. 

\begin{proposition}
\label{C.1}
If $(M^n)_{n\in \N}$ converges weakly toward a random matrix $M$, and $\mathbf X$ is leaf-tight, then $\mathbf X$ converges weakly for the GHP topology toward a random compact measured metric space $(X,d,p)$. Furthermore, if $(x_i)_{i\in \N}$ are i.i.d. random variables of law $p$ then $M^X\coloneqq(d(x_i,x_j))_{i,j\in \N}=^{(d)} M$. In addition, a.s. $p$  has full support.
 \end{proposition}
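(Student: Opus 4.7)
The plan is to build a candidate limit $(X,d,p)$ from the matrix convergence, then use leaf-tightness to obtain GH-tightness, and combine the two via Lemma~\ref{GP+GH=GHP}.

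\textbf{Construction of the limit and GP-convergence.} Each $M^n$ is the matrix of pairwise distances between i.i.d.\ points of law $p^n$ in $(X^n,d^n)$, so its law is a \emph{distance matrix distribution} in the sense of Gromov's reconstruction theorem. This property passes to the weak limit $M$, producing a law-unique random compact measured metric space $(X,d,p)$ with $M^X \stackrel{(d)}{=} M$. Concretely, one realises $X$ as the completion of a sequence $(x_i)_{i\ge 1}$ endowed with the pseudometric $M_{ij}$, with $p$ defined as the a.s.\ weak limit of the empirical measures $\frac{1}{N}\sum_{i=1}^N \delta_{x_i}$; in particular $(x_i)_{i \ge 1}$ is a.s.\ dense in $X$, so $p$ has full support a.s. Lemma~\ref{equivGP} then immediately gives $X^n \to X$ weakly for the GP topology.

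\textbf{GH-tightness and identification.} Given $\eta>0$, by~\eqref{eq:leaf-tight} choose $\delta>0$ and $k\ge 1$ such that $\P(d_H(X^n,F^n_k)>\delta)<\eta$ for all $n$ large enough, where $F^n_k \coloneqq \{x^n_1,\ldots,x^n_k\}$. Since $(F^n_k,d^n)$ is a measurable function of the $k \times k$ top-left submatrix of $M^n$, which converges in distribution, the family $(F^n_k)_n$ is GH-tight; combined with the $\delta$-Hausdorff approximation this gives GH-tightness of $(X^n,d^n)$. Along any subsequence with joint GP- and GH-convergence, Skorokhod's representation allows us to embed all $X^n$, the GP-limit $X$ and the GH-limit $X^\infty$ isometrically in a common compact Polish space. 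The samples $(x^n_i)_{i,n}$ then accumulate densely in both $X$ and $X^\infty$, forcing $X^\infty = X$ as compact metric spaces. Since $p$ has full support a.s., Lemma~\ref{GP+GH=GHP} upgrades the convergence to GHP; uniqueness of the subsequential limit gives the full claim.

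\textbf{Main obstacle.} The delicate point is identifying the GH-limit with the GP-limit: GP-convergence is blind to low-measure regions, whereas GH-convergence ignores the measure entirely. Leaf-tightness is precisely the mechanism that forces the samples $(x^n_i)$ to exhaust $X^n$ asymptotically, so that their a.s.\ closure is simultaneously the support of $p$ and the whole GH-limit. Making this rigorous requires a careful common-embedding argument (via Skorokhod representation) and the application of Gromov's reconstruction theorem to hand over from distributional information on matrices to an actual limit space.
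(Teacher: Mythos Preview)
Your argument is essentially correct but takes a different route from the paper. The paper does \emph{not} construct the limit via Gromov reconstruction; instead it first establishes GHP-tightness directly (the leaf-tightness condition together with convergence of $M^n$ bounds both $\diam(X^n)$ and the covering numbers $N_\delta(X^n)$, so \cite[Theorem~2.4]{ADH13} applies), then takes two arbitrary subsequential GHP-limits $[X,d,p]$ and $[X',d',p']$ and shows they coincide by matching their GP- and GH-projections and invoking Lemma~\ref{GP+GH=GHP}. Your approach --- build the GP-limit from $M$ first, then identify it with a GH-subsequential limit --- is more constructive, but imports Gromov's reconstruction as an external ingredient and has two soft spots you should tighten: (i) the reconstructed $(X,d,p)$ is only Polish a priori, so calling it ``compact'' in your first paragraph is premature --- compactness only follows \emph{a posteriori} once you identify $X$ with the GH-limit $X^\infty$; (ii) the identification step is sketchy: Skorokhod representation does not by itself produce a single compact ambient space containing all $X^n$, the GP-limit $X$, and the GH-limit $X^\infty$, and the assertion that the samples $(x^n_i)$ ``accumulate densely in $X^\infty$'' requires an explicit appeal to leaf-tightness (for large $k$, $\{x_1^n,\ldots,x_k^n\}$ is $\delta$-dense in $X^n$ with high probability, hence its limit points are $\delta$-dense in $X^\infty$). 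The paper sidesteps both issues by working only with subsequential GHP-limits, which are compact by construction, and comparing them pairwise rather than against an externally built candidate.
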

 
\begin{proof}
We focus on the first statement, as the other two will naturally follow alongside the proof. First, it directly follows from the convergence of $(M_n)_{n\in \N}$ and from \eqref{eq:leaf-tight} that: 
\begin{itemize}
\item $(\diam(X^n))_{n\in \N}$ is tight. 
\item For every $\delta>0$, $(N_\delta(X^n))_{n\in \N}$ is tight, where $N_\delta$ stands for the usual  $\delta$-covering number (see \cite{ADH13})
\end{itemize}
As a result, by \cite[Theorem 2.4]{ADH13}, $\mathbf X$ is tight for the weak GHP-topology.

Let $[X,d,p]$, $[X',d',p']$ be two GHP subsequential limits of $\mathbf X$. Let us show $[X,d,p]=^{(d)}[X',d',p']$. 
  Let $(x_i)_{i\in \N}$ be i.i.d. random variables of law $p$ and let $M^X\coloneqq(d(x_i,x_j))_{i,j\in \N}$. Define similarly $(x'_i)_{i\in \N}$,  $M^{X'}$. By Lemma
\ref{equivGP}, jointly with the above convergence,  along the proper subsequence, we have weakly $M^n\limit M^X$ and $M^n\limit M^{X'}$. Note that this implies $M^X=^{(d)} M$. By the Skorokhod representation theorem, and up to further extract other subsequences, we may assume that those two convergences hold almost surely, and that almost surely $M^X=M^{X'}$. This implies that almost surely,
\[ d_{GH}((\{x_i\}_{i\in \N},d),(\{x'_i\}_{i\in \N},d)) =0. \]
And by the leaf-tightness criterion \eqref{eq:leaf-tight}, almost surely $\{x_i\}_{i\in \N}$ and $\{x'_i\}_{i\in \N}$ are dense on $X$, $X'$ so 
\[ d_{GH}((X,d),(X',d')) =0. \]
Hence $[X,d]_{GH}=^{(d)}[X',d']_{GH}$ . Also by Lemma \ref{equivGP}, since $(M^n)_{n\in \N}$ converges weakly, we have the equality in distribution $[X,d,p]_{GP}=^{(d)}[X',d',p']_{GP}$. Moreover, since almost surely $\{x_i\}_{i\in \N}$ and $\{x'_i\}_{i\in \N}$ are dense on $X$ and $X'$, almost surely $p$ and $p'$ have full support on $X$ and $X'$. Therefore, by Lemma \ref{GP+GH=GHP} we have $[X,d,p]=^{(d)}[X',d',p']$. This shows that there is only one subsequential possible limit for $\mathbf X$, and thus concludes the proof.
\end{proof}

\end{document}